\title{$L^2$-torsion of automorphisms}
\author{Sam Hughes}
        \address{Mathematicians Institut der Universit\"at Bonn\\
                Endenicher Allee 60\\
                53115 Bonn, Germany}
         \email{sam.hughes.maths@gmail.com; hughes@math.uni-bonn.de}
          \urladdr{https://samhughesmaths.github.io}
\author{Wolfgang L\"uck}
        \address{Mathematicians Institut der Universit\"at Bonn\\
                Endenicher Allee 60\\
                53115 Bonn, Germany}
         \email{wolfgang.lueck@him.uni-bonn.de}
         \urladdr{http://www.him.uni-bonn.de/lueck}
         \date{March, 2026}
     \keywords{$L^2$-torsion; automorphisms; polynomial growth; hyperbolic groups, handlebody groups}
\subjclass[2020]{Primary 20F99,  Secondary 46L99}
\renewcommand*{\backref}[1]{}
\renewcommand*{\backrefalt}[4]
{\ifcase #1
        No citation in the text.
    \or
        Cited on Page #2.
    \else
        Cited on Pages #2.
    \fi
}
\def\@tocline#1#2#3#4#5#6#7{\relax
  \ifnum #1>\c@tocdepth 
  \else
    \par \addpenalty\@secpenalty\addvspace{#2}%
    \begingroup \hyphenpenalty\@M
    \@ifempty{#4}{%
      \@tempdima\csname r@tocindent\number#1\endcsname\relax
    }{%
      \@tempdima#4\relax
    }%
    \parindent\z@ \leftskip#3\relax \advance\leftskip\@tempdima\relax
    \rightskip\@pnumwidth plus4em \parfillskip-\@pnumwidth
    #5\leavevmode\hskip-\@tempdima
      \ifcase #1
       \or\or \hskip 1em \or \hskip 2em \else \hskip 3em \fi%
      #6\nobreak\relax
    \hfill\hbox to\@pnumwidth{\@tocpagenum{#7}}\par
    \nobreak
    \endgroup
  \fi}
\DeclareMathAlphabet{\matheurm}{U}{eur}{m}{n}
\newcommand{\eub}[1]{\underline{E}#1}
\DeclareMathOperator{\colim}{colim}
\DeclareMathOperator{\cyl}{cyl}
\DeclareMathOperator{\GL}{GL}
\DeclareMathOperator{\id}{id}
\DeclareMathOperator{\Isom}{Isom}
\DeclareMathOperator{\pr}{pr}
\DeclareMathOperator{\res}{res}
\DeclareMathOperator{\rk}{rk}
\DeclareMathOperator{\tors}{tors}
\DeclareMathOperator{\vol}{vol}
\DeclareMathOperator{\Wh}{Wh}
\newcommand{\FIN}{\mathcal{FIN}}
  \newcommand{\IC}{\mathbb{C}}
  \newcommand{\IE}{\mathbb{E}}
  \newcommand{\IN}{\mathbb{N}}
  \newcommand{\IQ}{\mathbb{Q}}
  \newcommand{\IR}{\mathbb{R}}
  \newcommand{\IZ}{\mathbb{Z}}
  \newcommand{\calf}{\mathcal{F}}
  \newcommand{\calh}{\mathcal{H}}
  \newcommand{\calk}{\mathcal{K}}
  \newcommand{\caln}{\mathcal{N}}
  \newcommand{\calp}{\mathcal{P}}
  \newcommand{\calt}{\mathcal{T}}
  \newcommand{\pt}{\{\bullet\}}
  \newcommand{\EGF}[2]{E_{#2}(#1)}
\newcommand{\calfin}{{\mathcal F}{\mathcal I}{\mathcal N}}
     \newcounter{commentcounter}
\DeclareMathOperator{\JSJ}{\mathrm{JSJ}}
\DeclareMathOperator{\Flex}{\mathrm{Flex}}
\DeclareMathOperator{\Stab}{\mathrm{Stab}}
\DeclareMathOperator{\fr}{\mathrm{fr}}
\DeclareMathOperator{\Mod}{\mathrm{Mod}}
\DeclareMathOperator{\LM}{\mathrm{LM}}
\DeclareMathOperator{\Aut}{\mathrm{Aut}}
\DeclareMathOperator{\Out}{\mathrm{Out}}
     \theoremstyle{plain} \newtheorem{theorem}{Theorem}[section]
      \newtheorem{lemma}[theorem]{Lemma}
     \newtheorem{corollary}[theorem]{Corollary}
     \newtheorem{prop}[theorem]{Proposition}
     \newtheorem{proposition}[theorem]{Proposition}
     \newtheorem{conjecture}[theorem]{Conjecture}
      \newtheorem*{theorem*}{Theorem}
     \newtheorem*{theoremA*}{Theorem A} \newtheorem*{theoremB*}{Theorem B}
     \theoremstyle{definition} \newtheorem{definition}[theorem]{Definition}
      \newtheorem{example}[theorem]{Example}
      \newtheorem{remark}[theorem]{Remark}
     \newtheorem*{definition*}{Definition}
     \theoremstyle{remark}
\let\c@equation=\c@theorem\makeatother
      \newcommand{\CAT}{\operatorname{CAT}}
     \newcommand{\version}[1] 
     {\begin{center} last edited on #1\\
         last compiled on \today \ at \DTMcurrenttime.\\
         name of tex-file: \jobname
       \end{center}}
\begin{document}

     \begin{abstract}
       We develop the theory of $L^2$-torsion of an automorphism of a group and compute it for every automorphism of a group which is hyperbolic and one-ended relative to a finite collection of virtually polycyclic groups.  We also prove a combination formula for the $L^2$-torsion of a group in terms of the $L^2$-torsion of its stabilisers of a sufficiently nice action on a contractible space. We apply it to compute the $L^2$-torsion of a selection of $\CAT(0)$
       lattices, of many relatively hyperbolic groups and their automorphisms, of higher
       dimensional graph manifolds, and of handlebody groups. 
     \end{abstract}

     \maketitle






  \typeout{------------------- Introduction -----------------}
  \section{Introduction}\label{sec:introduction}

The paper deals with $L^2$-torsion and computations of it,  in particular for groups and group automorphisms.
Before we are describing our results, we give a brief survey about the relevance of $L^2$-torsion.

The $L^2$-torsion $\rho^{(2)}$ is an invariant of groups and spaces which is defined for a
large class of groups and spaces with vanishing $L^2$-homology and can be defined
analytically and topologically.  When defined, the invariant is a real number which
behaves similarly to an Euler characteristic in the sense that it is multiplicative
through finite covers.  It seems plausible that it should behave like a hyperbolic volume.
Outside of the world of closed locally symmetric spaces~\cite{Olbrich(2002)} and
$3$-manifolds~\cite{Lueck-Schick(1999),Lueck(2002)}, computing $L^2$-torsion remains a
formidable challenge and there are very few examples, e.g.,~\cite{Clay(2017free),
  Wegner(2009)}.  For the definition of $L^2$-torsion and a discussion about the
Determinant Conjecture, which ensures the $L^2$-torsion is defined, we refer the reader to
Section~\ref{sec:Basics_about_L_upper_2-torsion}.  For a comprehensive introduction to
$L^2$-invariants the reader is referred to~\cite{Lueck(2002)}.

One should think of $L^2$-torsion as a generalization of the notion of volume. Namely, if
$M$ is a compact manifold of odd dimension $n$ whose interior is a complete hyperbolic
manifold of finite volume, then its volume is up to a dimension constant $C_n \not = 0$
proportional to the $L^2$-torsion, see L\"uck--Schick~\cite{Lueck-Schick(1999)}.  There
are other notions generalizing the notion of the volume for hyperbolic manifolds of finite
volume such as its minimal volume entropy $\mathcal E_{\mathrm min}(M)$ and its simplicial
volume $||M||$.  Work of Gromov~\cite{Gromov(1982)}, Pieroni \cite{Pieroni2019},
Soma~\cite{Soma(1981)}, and Thurston~\cite{Thurston(1978)} shows that $\mathrm{Vol}(M)$,
$\mathcal E_{\mathrm min}(M)^3$, and $||M||$ are proportional for hyperbolic orientable
closed $3$-manifolds and that $\mathrm{Vol}(M)$ and $||M||$ are proportional for
hyperbolic orientable closed manifolds up to a dimension constant.  Moreover, for a
closed, orientable $n$–manifold Gromov \cite[page~37]{Gromov(1982)} showed that
$\mathcal E_{\mathrm{min}}(M)^n\geq c_n||M||$ where $c_n$ only depends on $n = \dim M$.
We remark that there are examples of closed aspherical manifolds $M$, where
$\rho^{(2)}(\widetilde{M}) = 0$ and $||M|| \not = 0$ holds, see~\cite[Example~4.18 on
page~498]{Lueck(2002)}. There is the question whether for an aspherical closed manifold
$M$ with $||M| = 0$ the universal covering is $\det$-$L^2$-acyclic and satisfies
$\rho^{(2)}(\widetilde{M}) = 0$, see~\cite[Question~14.39 on page 488]{Lueck(2002)}.
Moreover, there are examples where $\rho^{(2)}(G)=0$, but
$\mathcal E_{\mathrm min}(G)\neq 0$, compare~\cite[Theorem~1.1]{Bregman-Clay(2021)} with
\cite{Clay(2017free)}.

The following conjecture is taken from~\cite[Conjecture~1.12~(2)]{Lueck(2013l2approxfib)}.
  For locally symmetric spaces it reduces to the conjecture of Bergeron and
  Venkatesh~\cite[Conjecture~1.3]{Bergeron-Venkatesh(2013)}.

  \begin{conjecture}[Homological torsion growth and $L^2$-torsion]%
  \label{con:Homological_torsion_growth_and_L2-torsion}
  Let $M$ be an aspherical closed manifold. Consider a descending chain of subgroups
  $\pi_1(M) = G_0 \supseteq G_1 \supseteq G_2 \supseteq \cdots $
  such that $G_i$ is normal in $G$, the index $[G:G_i]$ is finite, and $\bigcap_{i \ge 0} G_i = \{1\}$.
  Let $p \colon \overline{M} \to M$ be the universal covering. Put $M[i] := G_i\backslash \overline{M}$.
  We obtain a $[G:G_i]$-sheeted covering $p[i] \colon M[i] \to M$. 

     Then we get for  any natural number $n$ with $2n +1 \not= \dim(M)$
      \[
      \lim_{i \to \infty} \;\frac{\ln\big(\bigl|\tors\bigl(H_n(M[i];\IZ)\bigr)\bigr|\bigr)}{[G:G_i]} = 0.
      \]
      If the dimension $\dim(M) = 2m+1$ is odd, then $\widetilde{M}$ is $\det$-$L^2$-acyclic and we get
      \[
      \lim_{i \to \infty} \;\frac{\ln\big(\bigl|\tors\bigl(H_m(M[i];\IZ)\bigr)\bigr|\bigr)}{[G:G_i]} = (-1)^m \cdot
      \rho^{(2)}(\widetilde{M}).
      \]
    \end{conjecture}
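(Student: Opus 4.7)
Since Conjecture~\ref{con:Homological_torsion_growth_and_L2-torsion} is a famous open problem, what follows is the strategy I would attempt rather than a complete argument.

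The plan is to route through the classical bridge connecting integral Reidemeister torsion, analytic Ray--Singer torsion, and the Fuglede--Kadison determinant that defines $\rho^{(2)}$, and then to apply an $L^2$-approximation theorem along the tower $(G_i)$. First I would fix an equivariant CW-structure on the universal cover $\widetilde{M}$, pass to the $\IZ[G/G_i]$-cellular chain complex $C_\ast(M[i];\IZ)$ with a compatible cellular basis, and express the classical Reidemeister--Franz torsion as
\[
\rho_{\IZ}(M[i]) \;=\; \sum_n (-1)^n \ln\bigl|\tors H_n(M[i];\IZ)\bigr| \;+\; R_i,
\]
where $R_i$ is a correction controlled by the free ranks $b_n(M[i])$ together with the $\ell^2$-metrics induced from the cellular basis. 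A Cheeger--M\"uller-type theorem then identifies this, up to normalizing contributions, with the analytic Ray--Singer torsion $\rho_{\mathrm{an}}(M[i])$.

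Second, I would apply the $L^2$-approximation theorem for Fuglede--Kadison determinants in its combinatorial formulation: dividing $\log\det$ of the combinatorial Laplacian $\Delta_n^{[i]}$ on $C_\ast(M[i];\IR)$ by $[G:G_i]$ and letting $i\to\infty$ recovers the Fuglede--Kadison determinant of $\Delta_n^{(2)}$ on $\widetilde{M}$. Taking the signed sum and using a Singer-type vanishing of $L^2$-Betti numbers for odd-dimensional aspherical $\widetilde{M}$ (so that $b_n(M[i])/[G:G_i]\to 0$ and $R_i/[G:G_i]\to 0$) gives the \emph{alternating} form of the conjecture,
\[
\lim_{i\to\infty}\frac{1}{[G:G_i]}\sum_n (-1)^n \ln\bigl|\tors H_n(M[i];\IZ)\bigr| \;=\; \rho^{(2)}(\widetilde{M}),
\]
which already packages the middle-degree identity together with the non-middle-degree terms.

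The hard part --- and the reason the conjecture remains open in this generality --- is upgrading this alternating identity to the claimed \emph{degree-wise} statement, and in particular ruling out that non-middle-degree torsion grows linearly in $[G:G_i]$ with contributions that cancel in the alternating sum. The natural tool is to bound the spectral density functions $F_n^{[i]}(\lambda)$ of $\Delta_n^{[i]}$ near zero uniformly in $i$, for instance by a Lück-type estimate of the form $F_n^{[i]}(\lambda)-F_n^{[i]}(0)\le C(-\log\lambda)^{-1-\varepsilon}$ for small $\lambda$, which would let one interchange $\log\det$ with the limit degree by degree. This uniform log-integrability at the spectral bottom is the central analytic obstacle: it is available for arithmetic locally symmetric spaces (Bergeron--Venkatesh) and for hyperbolic $3$-manifolds (L\"uck--Schick), but I do not see how to obtain it for a general aspherical closed manifold using only the Determinant Conjecture, and the conjecture seems to require genuinely new spectral input in this generality.
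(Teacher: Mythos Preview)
The paper does not prove this statement: it is stated as an open conjecture, taken from~\cite[Conjecture~1.12~(2)]{Lueck(2013l2approxfib)}, and the paper's contribution is to verify it in specific cases (e.g., handlebody groups, certain polynomially growing automorphisms), not in the generality stated. You correctly recognize this and present a strategy rather than a proof.

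Your outline is the standard heuristic picture, but it treats two open conjectures as theorems. First, the ``$L^2$-approximation theorem for Fuglede--Kadison determinants'' you invoke in the second step is not a theorem in this generality: approximation of $L^2$-Betti numbers along residual chains is L\"uck's theorem, but approximation of the Fuglede--Kadison determinant (equivalently, of $\rho^{(2)}$) is itself a major open problem, essentially equivalent to Conjecture~\ref{con:Modified_Homological_torsion_growth_and_L2-torsion} in the paper; see~\cite{Lueck(2016_l2approx)}. Second, the ``Singer-type vanishing of $L^2$-Betti numbers for odd-dimensional aspherical $\widetilde{M}$'' you use to kill $R_i/[G:G_i]$ is the Singer Conjecture, also open for general aspherical closed manifolds. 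So your argument does not establish even the alternating-sum identity; that identity is precisely the modified conjecture the paper singles out as a weaker but still open statement.

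Your final paragraph correctly isolates the additional obstacle to the degree-wise claim (uniform log-integrability of the spectral density near zero), and your remark that this is available in the arithmetic and hyperbolic settings but not in general is accurate. Just be aware that the difficulty begins one step earlier than you indicate: without the determinant approximation conjecture, the alternating version is already out of reach, not merely the degree-wise refinement.
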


 Considerations concerning the Singer Conjecture due to Avramidi-Okun-Schreve before Theorem~4 appearing
 in~\cite{Avramidi-Okun-Schreve(2021)} lead to the following modification whose conclusion is weaker and
 appears in~\cite[Section~6.7]{Kirstein-Kremer-Lueck(2025)} for $X$ an aspherical closed manifold of odd dimension and
 in~\cite[Conjecture~8.9]{Lueck(2016_l2approx)} in general.

\begin{conjecture}[Modified Homological torsion growth and $L^2$-torsion]%
      \label{con:Modified_Homological_torsion_growth_and_L2-torsion}
      Let $X$ be a connected finite $CW$-complex which is $\det$-$L^2$-acyclic.
      Put
      \[\rho^{\IZ}(X[i])= \sum_{n = 0}^{2m+1} (-1)^n\cdot
        \frac{\ln(|\tors(H_n(X[i];\IZ))|)}{[G:G_i]}.
       \]

      Then the limit
      $\lim_{i \to \infty} \rho^{\IZ}(X[i])$
      exists and is given by
      \[
       \lim_{i \to \infty} \rho^{\IZ}(X[i]) =
       \rho^{(2)}(\widetilde{X}).
      \]
    \end{conjecture}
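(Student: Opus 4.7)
The plan is to relate $\rho^{\IZ}(X[i])$ to the classical Reidemeister torsion of the finite covers and then apply an approximation theorem converting the classical invariant to the $L^2$-torsion along the tower $(G_i)$. Concretely, fix a CW-structure on $X$ and lift it to each cover $X[i]$; then $C_*(X[i];\IZ)$ is a complex of finitely generated free $\IZ$-modules with a preferred cellular basis. A Smith-normal-form calculation on the integral chain complex, combined with a choice of rational bases of the $H_n(X[i];\IQ)$, yields
\[
\rho^{\mathrm{top}}(X[i]) \;=\; \sum_{n=0}^{2m+1}(-1)^n \ln\bigl|\tors(H_n(X[i];\IZ))\bigr| + R_i,
\]
where $\rho^{\mathrm{top}}(X[i])$ is the Reidemeister torsion with respect to the cellular basis and the chosen rational homology basis, and $R_i$ is the regulator-type correction depending only on the rational homology bases.

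Second, under the $\det$-$L^2$-acyclicity hypothesis, the combinatorial definition of $\rho^{(2)}$ predicts that $\rho^{\mathrm{top}}(X[i])/[G:G_i]$ converges to $\rho^{(2)}(\widetilde{X})$. This in turn reduces to showing
\[
\frac{1}{[G:G_i]}\,\ln\det\bigl(\Delta_n^{(i)}\bigr) \;\longrightarrow\; \ln\det_{\mathcal{N}(G)}\bigl(\Delta_n^{(2)}\bigr)
\]
for the combinatorial Laplacians $\Delta_n^{(i)}$ on $C_n(X[i];\IR)$. L\"uck's approximation theorem for spectral density functions gives weak convergence $dF_n^{(i)} \rightharpoonup dF_n^{(2)}$ of the associated spectral measures, and convergence of the logarithmic integrals $\int_{0^+}^{\infty}\ln(\lambda)\,dF_n^{(i)}(\lambda)$ then follows once one has uniform integrability near zero; this is precisely what the Determinant Conjecture for $\widetilde{X}$ is designed to supply.

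Third, I would show that $R_i/[G:G_i]\to 0$. By L\"uck's approximation theorem for $L^2$-Betti numbers, $\rk_{\IZ} H_n(X[i];\IZ)/[G:G_i] \to b_n^{(2)}(\widetilde{X}) = 0$, so the number of basis vectors entering $R_i$ grows sublinearly in $[G:G_i]$. A careful height estimate on these basis vectors, for example via the cup product structure lifted to $X[i]$ from $X$, would then bound $R_i$ sublinearly, as required.

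The main obstacle is the second step: transferring control of the Fuglede--Kadison determinant from the $L^2$-limit back to the finite approximants requires a uniform estimate of the form $\int_0^{\epsilon}\ln(\lambda)\,dF_n^{(i)}(\lambda)\to 0$ as $\epsilon\to 0$, uniformly in $i$. This is a Lehmer-type problem and is precisely the reason Conjecture~\ref{con:Modified_Homological_torsion_growth_and_L2-torsion} remains open in general. A realistic incremental target would be to establish the conjecture first for classes of groups where $L^2$-torsion approximation is already known, for instance elementary amenable groups or groups admitting polynomial spectral lower bounds near zero, and then to attempt to propagate it through the combination formulas developed in the main body of this paper.
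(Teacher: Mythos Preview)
The statement you are addressing is a \emph{conjecture} in the paper, not a theorem; the paper offers no proof of it and explicitly presents it as open (citing its appearance elsewhere as a conjecture). So there is nothing in the paper to compare your proposal against. Your proposal itself acknowledges this, correctly identifying the second step---uniform control of $\int_0^{\epsilon}\ln(\lambda)\,dF_n^{(i)}(\lambda)$ as $\epsilon\to 0$---as the Lehmer-type obstruction that keeps the conjecture open.

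That said, your third step is also a genuine gap, not a routine matter. The regulator term $R_i$ does not obviously satisfy $R_i/[G:G_i]\to 0$: even when the rational Betti numbers of $X[i]$ grow sublinearly, the regulator (the covolume of the integral lattice in real homology with respect to a harmonic or combinatorial inner product) can in principle grow or decay exponentially in $[G:G_i]$. The ``height estimate via cup product'' you sketch is not a known technique for bounding regulators along towers, and in the arithmetic setting (Bergeron--Venkatesh) the regulator contribution is a separate, highly nontrivial problem. So your outline has two independent hard steps, not one, and both are open in general.
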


    If $M$ is a closed hyperbolic $3$-manifold, the conjecture of Bergeron and Venkatesh,
    Conjecture~\ref{con:Homological_torsion_growth_and_L2-torsion}, and
    Conjecture~\ref{con:Modified_Homological_torsion_growth_and_L2-torsion} are equivalent
    and reduce to the assertion
    \[
    \lim_{i \to \infty} \;\frac{\ln\big(\bigl|\tors\bigl(H_1(M[i];\IZ)\bigr)\bigr|\bigr)}{[G:G_i]} = \frac{1}{6\pi} \cdot \mathrm{Vol}(M).
    \]
    Note that there are groups $G$ where $\mathcal E_{\mathrm{min}}(BG)\neq 0$ but the
    torsion homology growth vanishes in every degree
    \cite[Corollary~C]{Andrew-Hughes-Kudlinska(2024)}.
    
    Our first result is a `sum' or `combination' formula for the $L^2$-torsion of a group
    in terms of the $L^2$-torsion of its stabilisers of a sufficiently nice action on a
    contractible space.  One should compare this to the cheap $\alpha$-rebuilding property
    of Abert, Bergeron, Fraczyk, and Gaboriau~\cite{Abert-Bergeron-Fraczyk-Gaboriau(2025)}
    and its algebraic analogue due to Li, L\"oh, Moraschini, Sauer, and
    Uschold~\cite{Loeh-Li-Moraschini-Sauer-Uschold(2024)}. We note that an exposition of a
    version of the following theorem also appeared
    in~\cite[\S4.2.4]{Loeh-Li-Moraschini-Sauer-Uschold(2024)}.

    \begin{restatable*}{theorem}{thmCombination}\label{thmCombination}
      Let $G$ be a group acting cocompactly on a contractible $CW$-complex $X$ such that
      the fixed point sets of finite subgroups of $G$ are contractible.  Suppose that each
      cell stabiliser $H_\sigma$ of the action of $G$ is $L^2$-acyclic and admits a finite
      model for $\underline{E}H_\sigma$.  If $G$ satisfies the Determinant Conjecture,
      then
      \[
        \rho^{(2)}(G) = \sum_{n \ge 0} \sum_{i \in I_n} (-1)^n \cdot
        \rho^{(2)}(H^n_{i_n}).
      \]
    \end{restatable*}

    Here $\underline EG$ is the \emph{classifying space for proper actions} of $G$.  That
    is, the classifying space for the family of finite subgroups.  In the case where $G$
    is torsion-free we are simply asking that the stabilisers admit a finite classifying
    space.  The additional complexity in the torsion case is because traditionally
    $L^2$-torsion is not defined for groups which are not virtually torsion-free.  We
    circumvent this issue by developing the theory using $\underline{E}G$ in
    Section~\ref{sec:Basics_about_L_upper_2-torsion}.

    Using Theorem~\ref{thmCombination} we make a number of computations in
    Section~\ref{sub:Computations}.  For example we give a vanishing criterion for a
    $\CAT(0)$ lattice in a product of locally compact groups
    (Proposition~\ref{vanishing_CAT0}), prove vanishing for Leary--Minasyan groups
    (Example~\ref{ex:LearyMinasyan}) and we compute the $L^2$-torsion of a generalised
    graph manifold (Theorem~\ref{thm:graphMan}).  We now highlight one more computation
    here in the introduction. Handlebody groups are mapping class groups of
    $3$-dimensional handlebodies and are an important class of groups arising in
    low-dimensional topology, see~\cite{Andrew-Hensel-Hughes-Wade(2025)} for more
    information.  It is known~\cite[Theorem~6.1]{Andrew-Hensel-Hughes-Wade(2025)} that the
    $L^2$-Betti numbers and homology torsion growth of the handlebody groups vanishes.
    Here we show that $L^2$-torsion vanishes as well, verifying
    Conjecture~\ref{con:Homological_torsion_growth_and_L2-torsion} for the groups and
    solving~\cite[Problem~28]{Andrew-Hensel-Hughes-Wade(2025)}.

\begin{restatable*}{theorem}{thmHandlebody}
    Let $g\geq 2$ and let $V_g$ denote the genus $g$ handlebody.
  Then, \[\rho^{(2)}(\Mod(V_g))=0.\]
\end{restatable*}

We also prove a theorem about the vanishing of $L^2$-torsion for a polynomially growing
automorphisms of many families of groups.  An automorphism $\Phi$ is \emph{polynomially
  growing} of \emph{degree} at most $d$ if for each $g\in G$ there is a constant $C$ such
that $|\Phi^n(g)|<Cn^d+C$ for all $n\in\IN$.  The following theorem
answers~\cite[Question~1.2]{Andrew-Guerch-Hughes-Kudlinska(2023)} and when combined
with~\cite[Theorem~A]{Andrew-Guerch-Hughes-Kudlinska(2023)} resolves
Conjecture~\ref{con:Homological_torsion_growth_and_L2-torsion} for the relevant groups.

\begin{restatable*}{theorem}{thmAnthology}\label{thm:poly_anthology_L2tors}
  Let $\Gamma$ be a group isomorphic to one of
  \begin{itemize}
  \item $G\rtimes_\Phi \IZ$ with $G$ residually finite and hyperbolic;
  \item $G\rtimes_\Phi\IZ$ with $G$ residually finite and hyperbolic relative to a finite
    collection of virtually polycyclic groups;
  \item $A_L\rtimes_\Phi \IZ$ where $A_L$ is a right-angled Artin group and
    $\Phi \in \mathrm{Aut}(A_L)$ is untwisted; or
  \item $W_L\rtimes_\Phi \IZ$ where $W_L$ is a right-angled Coxeter group.
  \end{itemize}
  If $\Phi$ is polynomially growing, then $\rho^{(2)}(\Gamma)=0$.
\end{restatable*}

Whilst the previous theorem is specifically about groups we actually work much more
generally and study the \emph{$L^2$-torsion $\rho^{(2)}(\Phi)$ of an automorphism}
$\Phi\colon G\to G$.  This generalisation allows us to side-step assumptions about the
Determinant Conjecture and only assume it for certain subgroups of $G$.  Moreover, when
the Determinant Conjecture holds for $\Gamma=G\rtimes_\Phi \IZ$ we have
$\rho^{(2)}(\Phi)=\rho^{(2)}(\Gamma)$.  We take up this task in
Sections~\ref{sec:L2-torsion_of_a_selfhomotopy_equivalence}
and~\ref{sec:L2-torsion_of_an_automorphism_of_a_det-finite_group}.  Our main application
is that we compute the $L^2$-torsion of any automorphism of a one-ended hyperbolic group
in terms of its canonical JSJ decomposition.  For background on JSJ decompositions
see~\cite{Guirardel-Levitt(2017JSJ)}.  More generally we prove:

\begin{restatable*}{theorem}{thmRelHypAutos}\label{thmRelHypAutos}
  Let $G$ be a group hyperbolic and one-ended relative to a finite collection $\calp$ of
  virtually polycyclic groups, let $\Phi \in \calk(\calt_G)$, and let
  $\Gamma=G\rtimes_\Phi\IZ$.  Then
  \[\rho^{(2)}(\Phi)= \sum_{v\in\Flex(G)} \rho^{(2)}(G_v\rtimes_{\Phi|_{G_v}}\IZ),\]
  where $\Flex(G)$ is the set of flexible vertices in a JSJ decomposition of $G$.
\end{restatable*}

In the previous theorem, $\calk(\calt_G)$ is a certain finite index subgroup of
$\Aut(G,\calp)$, see
Section~\ref{sec:Ltwo_rel_hyp}.

As an application of the previous theorem we obtain vanishing for all polynomially growing
automorphisms of all one-ended groups hyperbolic relative to a finite collection of
virtually polycyclic groups --- see Theorem~\ref{thm:poly_L2tors_rel_hyp_one_ended}.  We
refer the interested reader to Section~\ref{sec:poly} for further results.  Motivated by
our results we raise the following conjecture:

\begin{conjecture}[Vanishing of the $L^2$-torsion of an automorphism of subexponential
  growth]%
  \label{con:Vanishing_of_the_L2-torsion_of_an_automorphism_of_subexponential_growth}
  Let $\Phi \colon G \to G$ be an automorphism of a $\det$-finite group $G$ which has
  subexponential growth. Then $\rho^{(2)}(\Phi) = 0$.
\end{conjecture}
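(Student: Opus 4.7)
The plan is to attack Conjecture~\ref{con:Vanishing_of_the_L2-torsion_of_an_automorphism_of_subexponential_growth} through a combination of two complementary strategies: a reduction to the polynomially growing case using structural dichotomies, and a direct spectral estimate for the remaining cases.

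First, I would try to establish a growth dichotomy inside the classes of groups already handled in the paper. For word-hyperbolic groups, Bestvina--Feighn--Handel style train track machinery forces the growth of an automorphism to be either polynomial or strictly exponential, with no intermediate behaviour. Analogous dichotomies are either known or expected for groups hyperbolic relative to virtually polycyclic subgroups, for untwisted automorphisms of right-angled Artin groups, and for automorphisms of right-angled Coxeter groups. Whenever such a dichotomy is available, subexponential growth forces polynomial growth, and the conjecture follows immediately from Theorem~\ref{thm:poly_anthology_L2tors}, or from Theorem~\ref{thmRelHypAutos} after checking that polynomial growth of $\Phi$ descends to each flexible vertex group. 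The combination formula Theorem~\ref{thmCombination} then bootstraps vanishing from vertex stabilisers to the ambient group in graph-of-groups situations, so this strategy should cover essentially every concrete class currently within reach.

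For the general case I would represent $\rho^{(2)}(\Phi)$ as an alternating sum of logarithms of Fuglede--Kadison determinants of operators of the form $I - t\cdot \Phi_\ast$ acting on the $L^2$-chain modules of a finite $\underline{E}G$-complex, where $t$ is a generator of the $\IZ$-factor of $\Gamma = G \rtimes_\Phi \IZ$. The analytic heart of the argument would be an upper bound of each such determinant by the exponential growth rate
\[
\lambda(\Phi) \; := \; \limsup_{n\to\infty} \frac{1}{n}\log\max_{s\in S} |\Phi^n(s)|,
\]
with $S$ a finite symmetric generating set of $G$; subexponential growth is precisely the statement $\lambda(\Phi)=1$, so such an estimate forces all the logarithms, and hence $\rho^{(2)}(\Phi)$, to vanish. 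Such bounds already exist in special contexts (for free-by-cyclic groups $\rho^{(2)}$ is essentially the logarithm of a dilatation), so the task is to extend them uniformly across $\det$-finite groups.

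The main obstacle is precisely this spectral estimate in the absence of a structural theory. For polynomially growing automorphisms one exploits nilpotent filtrations or train-track refinements to reduce to situations where $\Phi$ fixes a great deal of structure, but subexponentially yet super-polynomially growing automorphisms, should they exist at all, are not controlled by any existing combinatorial decomposition theorem. A further delicate point is that the Determinant Conjecture is assumed only for $G$ via $\det$-finiteness and not for $\Gamma$, so the argument must operate intrinsically with $\rho^{(2)}(\Phi)$ as developed in Section~\ref{sec:L2-torsion_of_an_automorphism_of_a_det-finite_group} rather than passing through $\rho^{(2)}(\Gamma)$. A realistic line of attack would therefore first dispatch the dichotomy across all tractable classes and then isolate the remaining cases as a sharpened open problem about the existence of genuinely intermediate-growth automorphisms.
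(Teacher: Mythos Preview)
This statement is a \emph{conjecture} in the paper, not a theorem; the paper offers no proof and explicitly raises it as an open problem motivated by the partial results in Theorem~\ref{thm:poly_anthology_L2tors}. There is therefore nothing to compare your proposal against.

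Your proposal is, appropriately, a research outline rather than a proof, and you are candid about this. A few remarks on the content. Your first strategy (growth dichotomy reducing subexponential to polynomial) is sound for the classes already covered in the paper, and indeed this is precisely why the paper can only state the conjecture rather than prove it: the dichotomy is known in those cases and Theorem~\ref{thm:poly_anthology_L2tors} handles the polynomial side. Your second strategy contains a real gap: the representation of $\rho^{(2)}(\Phi)$ you sketch, with determinants of $I - t\cdot\Phi_\ast$, is not correct in general. The $L^2$-torsion of $T_{\widehat{f};\Phi}$ is not simply an alternating sum of such determinants because the chain complex of the mapping telescope involves the full cellular chain complex of $X$, not just the action of $\Phi_\ast$ on a single module, and the Fuglede--Kadison determinant of a non-normal operator is not bounded above by its spectral radius or growth rate in the way you suggest. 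The free-by-cyclic case works because one has a very specific $2$-complex and the Mahler-measure-type formula of Clay; extending this uniformly to all $\det$-finite groups is the entire content of the conjecture, not a technical step. So your second paragraph identifies the right quantity $\lambda(\Phi)$ but the proposed inequality linking it to the determinant is unjustified and likely false as stated.
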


\begin{remark}
  We remark that all of our results and formulas here can be adapted to the setting of
  twisted $L^2$-torsion as developed in~\cite{Lueck(2018)}.
\end{remark}


\subsection{Acknowledgments}\label{subsec:Acknowledgements}

The paper is funded by the Deutsche Forschungsgemeinschaft (DFG, German Research
Foundation) under Germany's Excellence Strategy \--- GZ 2047/1, Projekt-ID 390685813,
Hausdorff Center for Mathematics at Bonn. The first author  was supported by a Humboldt
Research Fellowship at Universit\"at Bonn.  We thank the referee for a careful reading of the paper.

The paper is organized as follows:
\tableofcontents


\typeout{---------- Section 2:  Basics about $L^2$-torsion   ---------------}

\section{Basics about \texorpdfstring{$L^2$}{L2}-torsion}%
\label{sec:Basics_about_L_upper_2-torsion}

There are several notions of volume for a finite volume hyperbolic $3$-manifold $M$.  Its
hyperbolic volume $\mathrm{Vol}(M)$, its $L^2$-torsion $\rho^{(2)}(M)$, and its minimal
volume entropy $\mathcal E_{\mathrm min}(M)$.  Work of L\"uck and Schick shows that
$\mathrm{Vol}(M)$ and $\rho^{(2)}(M)$ are proportional~\cite{Lueck-Schick(1999)}.  Work of
Soma~\cite{Soma(1981)}, Gromov~\cite{Gromov(1982)}, and Thurston~\cite{Thurston(1978)}
shows that $\mathrm{Vol}(M)$ and $\mathcal E_{\mathrm min}(M)^3$ are proportional.

We collect some basic facts about $L^2$-torsion.  Most of it is described
in~\cite[Section~3.4]{Lueck(2002)} provided we consider finite free
$G$-$CW$-complexes. In this section we want to explain that all of this carries over to
proper finite $G$-$CW$-complexes. Recall that a $G$-$CW$-complex is proper if and only if
all its isotropy groups are finite, and is finite if and only if it is cocompact. The
motivation is that thus the notion of $L^2$-torsion makes sense for groups which are not
torsionfree but have a finite model for the classifying space $\eub{G} = \EGF{G}{\calfin}$
for proper $G$-actions, for instance for all hyperbolic groups, provided that
one such model is $\det$-$L^2$-acyclic.

Given a proper finite $G$-$CW$-complex, we denote by $C_*^c(X)$ its cellular $\IZ G$-chain
complex.  Suppose that we have chosen cellular $G$-pushouts for every $n \in \IZ^{\ge 0}$
\begin{equation}
\xymatrix{\coprod_{i \in I_n} G/H_i \times S^{n-1} \ar[r] \ar[d]
    &
    X_{n-1} \ar[d]
    \\
    \coprod_{i \in I_n} G/H_i \times D^n \ar[r]
    &
    X_n.
  }
  \label{cellular_G-pushouts_special}
\end{equation}
Note that then $I_n$ can be identified with the set of open $n$-cells of the finite
$CW$-complex $X/G$.  They induce preferred $\IZ[G]$-isomorphisms
  \begin{equation}
    \varphi_n \colon \bigoplus_{i \in I_n} \IZ[G/H_i] \xrightarrow{\cong} C_n^c(X),
    \label{isos_varphi_n}
  \end{equation}
   and hence preferred  $\IC G$-isomorphisms
  \begin{equation}
    \varphi^{(2)}_n \colon \bigoplus_{i \in I_n}  L^2(G) \otimes_{\IZ G} \IZ[G/H_i]  
    \xrightarrow{\cong} \bigoplus_{i \in I_n}  L^2(G) \otimes_{\IZ G} C_n^c(X).
    \label{isos_varphi_n_upper_(2)}
  \end{equation}
  Note that $ L^2(G) \otimes_{\IZ G} \IZ[G/K]$ is for any finite subgroup $K \subseteq G$
  a finitely generated Hilbert $\caln(G)$-module, as it embeds isometrically and
  $G$-linearly into $L^2(G)$, namely, by sending $x \otimes gK $ to
  $\frac{1}{|K|} \cdot \sum_{k \in K} xgk$.  Hence we get from the
  isomorphism~\eqref{isos_varphi_n_upper_(2)} the structure of a finite Hilbert
  $\caln(G)$-chain complex on $L^2(G) \otimes_{\IZ G} C_*^c(G)$.  Note that the choice of
  the $G$-pushouts~\eqref{cellular_G-pushouts_special} is not part of the
  $G$-$CW$-structure, only their existence is required. So we have to show that the
  structure of a finite Hilbert $\caln(G)$-chain complex on
  $L^2(G) \otimes_{\IZ G} C_*^c(G)$ is independent of the choice of the
  $G$-pushouts~\eqref{isos_varphi_n_upper_(2)}. If we make a different choices, then this
  changes the isomorphism $\varphi_n$ of~\eqref{isos_varphi_n} by an automorphism of the
  shape
  \[
    \nu := \bigoplus_{i \in I_n} \epsilon_i \cdot \nu_i \colon \bigoplus_{i \in I_n} \IZ[G/H_i]
    \xrightarrow{\cong} \bigoplus_{i \in I_n} \IZ[G/H'_i]
\]
where $\epsilon_i \in \{\pm 1\}$ and $\nu_i$ is induced by a bijective $G$-map
$G/H_i \xrightarrow{\cong} G/H_i'$.  Since the map
  \[\id_{L^2(G)} \otimes \nu \colon \bigoplus_{i \in I_n}  L^2(G) \otimes_{\IZ G} \IZ[G/H_i]
    \xrightarrow{\cong} \bigoplus_{i \in I_n}  L^2(G) \otimes_{\IZ G} \IZ[G/H'_i]  
  \]
  is an isometric $G$-linear automorphism, the structure of a finite Hilbert
  $\caln(G)$-chain complex on $L^2(G) \otimes_{\IZ G} C_*^c(G)$ is independent of the
  choice of the $G$-pushouts~\eqref{cellular_G-pushouts_special}.  Hence the finite
  $\caln(G)$-Hilbert chain complex $L^2(G) \otimes_{\IZ G} C_*^2(X)$ depends only the
  $G$-$CW$-structure of the finite proper $G$-$CW$-complex $X$.

  Now for a proper finite $G$-$CW$-complex $X$, the notions of $L^2$-Betti numbers
  $b^{(2)}_n(X;\caln(G))$, of determinant class, and of being $\det$-$L^2$-acyclic are
  defined, and we also have the notion of $L^2$-torsion $\rho^{(2)}(X;\caln(G)) \in \IR$,
  provided that $X$ is $\det$-$L^2$ acyclic.  (It is not necessary to understand the precise definition of these notions
  to read this paper.)
  Note that the class of groups which
  satisfies the Determinant Conjecture, see~\cite[Conjecture~13.2 on page~454]{Lueck(2002)},
  is quite large.  It includes all sofic groups.  If
  $G$ satisfies the Determinant Conjecture, the condition $\det$-$L^2$-acyclic on the
  proper finite $G$-$CW$-complex $X$ reduces to the condition that the $L^2$-Betti number
  $b_n^{(2)}(X;\caln(G))$ vanishes for all $n \in \IZ^{\ge 0}$.

  A \emph{Farrell-Jones group} $G$ is a group $G$ satisfying the \emph{Full Farrell-Jones
    Conjecture}.  Explanations about the Full Farrell-Jones Conjecture and informations
  about the class of Farrell-Jones groups, which is  rather large and contains for
  instance all hyperbolic groups, can be found in~\cite{Lueck(2025book)}.

    For a group $L$ define the  group homomorphism
    \begin{equation}
      {\det}^{(2)}_L \colon \Wh(L) \to \IR^{> 0}
      \label{det_L}
    \end{equation}
    by sending the class $[A]$ of an invertible $(m,m)$-matrix  $A \in \GL_m(\IZ L)$
    to the  Fuglede-Kadison determinant $\det^{(2)}(r_A^{(2)}) \in \IR^{>0}$
    of the automorphism of a finite Hilbert $\caln(G)$-module
     $r^{(2)}_A \colon L^2(L)^m \to  L^2(L)^m$ given by $A$.

    \begin{lemma}\label{:vanishing_of_the_determinant_map}
      Suppose  that the group $L$ satisfies the Determinant Conjecture or is a Farrell-Jones group.
       Then the homomorphism
       \[
        {\det}^{(2)}_L \colon \Wh(L) \to \IR^{> 0}
      \]
      defined in~\eqref{det_L} is trivial.
    \end{lemma}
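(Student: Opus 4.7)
The plan treats the two hypotheses separately, with the Farrell--Jones case reduced to the Determinant Conjecture case by appealing to assembly.

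Under the Determinant Conjecture the argument is immediate. The conjecture asserts that $\det^{(2)}(r^{(2)}_A) \geq 1$ for every matrix $A$ over $\IZ L$. Given $[A] \in \Wh(L)$ represented by $A \in \GL_n(\IZ L)$, the inverse $A^{-1}$ is itself a matrix over $\IZ L$, so both $\det^{(2)}(r^{(2)}_A) \geq 1$ and $\det^{(2)}(r^{(2)}_{A^{-1}}) \geq 1$. Multiplicativity of the Fuglede--Kadison determinant applied to $A A^{-1} = I_n$ yields
\[
\det^{(2)}(r^{(2)}_A) \cdot \det^{(2)}(r^{(2)}_{A^{-1}}) = \det^{(2)}(r^{(2)}_{I_n}) = 1,
\]
which forces both factors to equal $1$, so ${\det}^{(2)}_L([A]) = 1$.

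For the Farrell--Jones case the plan is to reduce to the first case applied to virtually cyclic subgroups of $L$. The $K$-theoretic Full Farrell--Jones Conjecture makes the assembly map
\[
H^L_n(\edub{L}; \bfK_{\IZ}) \xrightarrow{\;\cong\;} K_n(\IZ L)
\]
an isomorphism. Analysing this in degree one (for instance via the equivariant Atiyah--Hirzebruch spectral sequence, together with the observation that negative $K$-theory contributions from stabilisers are absorbed in higher filtration) one reads off that the images of the induction maps $i^V_{*} \colon \Wh(V) \to \Wh(L)$, as $V$ ranges over the virtually cyclic subgroups of $L$, generate $\Wh(L)$. Each such $V$ is amenable, hence sofic, and therefore satisfies the Determinant Conjecture; so the first case gives ${\det}^{(2)}_V \equiv 1$. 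Combined with the naturality identity
\[
{\det}^{(2)}_L \circ i^V_{*} = {\det}^{(2)}_V,
\]
which expresses compatibility of the Fuglede--Kadison determinant with the trace-preserving inclusion $\caln(V) \hookrightarrow \caln(L)$ of group von Neumann algebras, this forces $\det^{(2)}_L$ to vanish on every such generator and hence on all of $\Wh(L)$.

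The main obstacle is the Farrell--Jones reduction: verifying that, under the $K$-theoretic assembly isomorphism, $\Wh(L)$ really is generated by images from virtually cyclic subgroups, and that the naturality identity above holds with the normalisations in force. Both are standard in the $L^2$-invariants literature; in particular, the compatibility of $\det^{(2)}$ with subgroup inclusion can be cited from Chapter~3 of~\cite{Lueck(2002)}, while the structure of the equivariant homology of $\edub{L}$ in degree one is a routine application of the Atiyah--Hirzebruch machine. With these inputs in hand, both cases follow uniformly from the multiplicativity of $\det^{(2)}$.
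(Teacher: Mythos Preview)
Your treatment of the Determinant Conjecture case is correct and matches the paper exactly.

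For the Farrell--Jones case, your overall strategy---reduce to virtually cyclic subgroups and invoke the Determinant Conjecture there---is in the same spirit as what the paper cites, but your central claim has a gap. The assertion that $\Wh(L)$ is generated by the images of the induction maps $i^V_* \colon \Wh(V) \to \Wh(L)$ for virtually cyclic $V \subseteq L$ does \emph{not} follow from the assembly isomorphism $H_1^L(\edub{L}; \bfK_\IZ) \cong K_1(\IZ L)$. In the Atiyah--Hirzebruch spectral sequence the line $p+q=1$ has contributions $E^2_{p,1-p}$ for all $p \ge 0$; the image of induction from $K_1(\IZ V)$ sits in the lowest filtration piece $F_0$, while the terms with $p \ge 1$ (built from $K_0, K_{-1}, \ldots$ of virtually cyclic subgroups) populate the higher filtration quotients $F_p/F_{p-1}$. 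These are genuine elements of $K_1(\IZ L)$ not lying in $F_0$; saying they are ``absorbed in higher filtration'' is a description of where they live, not an argument that they lie in the image of induction or that $\det^{(2)}_L$ kills them. So as written you have only shown that $\det^{(2)}_L$ vanishes on a subgroup of $\Wh(L)$.

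The paper sidesteps this by first disposing of the torsion-free case (where FJC gives $\Wh(L)=0$ outright) and then, for groups with torsion, citing~\cite[Theorem~6.7(2)]{Lueck(2018)}, whose proof carries out the reduction to virtually cyclic subgroups with the required care (exploiting, among other things, that the target $\IR$ is a $\IQ$-vector space). Your sketch is on the right track but would need that more refined analysis to close the gap.
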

    \begin{proof} Suppose that $L$ satisfies the Determinant Conjecture.  Given an
      invertible $(m,m)$-matrix $A \in \GL_m(\IZ L)$, we conclude
      $\det^{(2)}(r_A^{(2)})\ge 1$ and $\det^{(2)}(r_{A^{-1}}^{(2)}) \ge 1$ from the
      Determinant Conjecture. Since we have
      \[{\det}^{(2)}(r_A^{(2)}) \cdot {\det}^{(2)}(r_{A^{-1}}^{(2)})
        =
        {\det}^{(2)}(r_A^{(2)} \cdot r_{A^{-1}}^{(2)}) = {\det}^{(2)}(\id_{L^2(G)^m}) = 1.
        \]
        we get $\det^{(2)}(r_A^{(2)}) = 1$.
        
        Suppose that $L$ is a Farrell-Jones group. If $L$ is torsionfree, then $\Wh(L)$
        vanish and the claim is obviously true.  If $L$ is not torsionfree, 
        $\Wh(L)$ is in general non-trivial. Nevertheless, the arguments in the proof
        of~\cite[Theorem~6.7~(2)]{Lueck(2018)} reduce to a proof that
        ${\det}^{(2)}_L \colon \Wh(L) \to \IR$ is trivial, take
        in~\cite[Theorem~6.7~(2)]{Lueck(2018)} $V$ to be the trivial $1$-dimensional
        $L$-representation.
      \end{proof}
      
      Now we collect some properties of $L^2$-torsion. The proof of the following theorem
      can be found in the case, where $G$ is torsionfree, in~\cite[Theorem~3.93 on
      page~161]{Lueck(2002)}.  Here we want to drop the condition torsionfree.

      \begin{definition}[Condition (DFJ)]\label{def:Condition_(DFJ}
    The group $G$ satisfies condition (DFJ) if it satisfies one of the following conditions:
  \begin{enumerate}
  \item The group $G$ satisfies the Determinant Conjecture;
  \item For any finite subgroup $H \subseteq G$ the Weyl group $W_GH$ is a Farrell-Jones  group;
  \item The group $G$ contains a torsionfree group $H$ of finite index such that
    $H$ satisfies the Determinant Conjecture or is a Farrell-Jones Group.
  \end{enumerate}
\end{definition}

If $G$ satisfies condition (DFJ), then every subgroup of $G$ does, since every subgroup of
a group which satisfies the Determinant Conjecture or is a Farrell-Jones group
respectively, satisfies the Determinant Conjecture or is a Farrell-Jones group
respectively, see~\cite[Theorem~3.14~(6) on page~129 and Lemma~13.45~(7) on
page~473]{Lueck(2002)} and~\cite[Theorem~16.5~(iia)]{Lueck(2025book)}.

    \begin{theorem}[Basic properties of $L^2$-torsion]\label{the:Basic_properties_of_L_upper_2-torsion}\

    \begin{enumerate}

    \item\label{the:Basic_properties_of_L_upper_2-torsion:G-homotopy_invariance}
      \emph{Homotopy invariance}\\[1mm]
       Let $X$ and $Y$ be  proper finite $G$-$CW$-complexes which are $G$-homotopy equivalent.
      Suppose  that $X$ is $\det$-$L^2$-acyclic. Then:

      \begin{enumerate}

      \item\label{the:Basic_properties_of_L_upper_2-torsion:G-homotopy_invariance:det_L2-acyclic}
       $Y$ is $\det$-$L^2$-acyclic. 

     \item\label{the:Basic_properties_of_L_upper_2-torsion:G-homotopy_invariance:equality}
       If $X$ and $Y$ are simple $G$-homotopy equivalent or if $G$ satisfies condition (DFJ),
       we get
      \[
       \rho^{(2)}(X; \caln(G)) = \rho^{(2)}(Y; \caln(G));
      \]
    \end{enumerate}
    
    \item\label{the:Basic_properties_of_L_upper_2-torsion:Sum_formula} \emph{Sum formula}\\[1mm]
      Consider the $G$-pushout of proper finite $G$-$CW$-complexes
      \[
        \xymatrix{X_0 \ar[r]^i \ar[d]_{f}
          &
          X_1 \ar[d]^{\overline{f}}
          \\
          X_2 \ar[r]_{\underline{i}}
          &
          X
        }
      \]
      where $i$ is an inclusion of proper $G$-$CW$-complexes, $f$ is a cellular $G$-map of
      $G$-$CW$-complexes, and the $G$-$CW$-structure on $X$ has as $n$-skeleton
      $X_n = \overline{f}((X_2)_n) \cup \underline{i}((X_1)_n)$.  Suppose that $X_0$,
      $X_1$, and $X_2$ are proper finite $G$-$CW$-complexes which are
      $\det$-$L^2$-acyclic.

      Then $X$ is a proper finite $G$-$CW$-complex which is  $\det$-$L^2$-acyclic, and we get
      \[
        \quad \quad \quad \quad \rho^{(2)}(X; \caln(G)) = \rho^{(2)}(X_1; \caln(G))
        + \rho^{(2)}(X_2; \caln(G)) - \rho^{(2)}(X_0; \caln(G));
      \]

      \item\label{the:Basic_properties_of_L_upper_2-torsion:Product_formula} \emph{Product formula}\\[1mm]
        Let $X$ be a proper finite $G$-$CW$-complex which is  $\det$-$L^2$-acyclic.
        Let $Z$ be a finite proper $H$-$CW$-complex.  Denote by $\chi^{(2)}(Z;\caln(H))$ the $L^2$-Euler characteristic of $Z$.

          Then the $G \times H$-space $X \times Z$ is a proper $G \times H$-$CW$-complex which $\det$-$L^2$-acyclic, and  we get 
          \[
           \rho^{(2)}(X \times Z;\caln(G \times H))  = \chi^{(2)}(Z;\caln(H)) \cdot  \rho^{(2)}(X;\caln(G));
         \]

      \item\label{the:Basic_properties_of_L_upper_2-torsion:passage_to_subgroups_of_finite_index}
        \emph{Restriction}\\[1mm]
        Let $H$ be a subgroup of $G$ of finite index $[G:H]$. Let
        $X$ be a proper finite $G$-$CW$-complex.  Let $X|_H$ be the $H$-space obtained
        from $X$ by restriction with $i$.

        Then $X|_H$ is a proper finite $H$-$CW$-complex. It is $\det$-$L^2$-acyclic if and
        only if $X$ is $\det$-$L^2$-acyclic.  If $X$ is $\det$-$L^2$-acyclic, then we get
   \[
     \rho^{(2)}(X|_H;\caln(H)) = [G:H] \cdot \rho^{(2)}(X;\caln(G));
    \]

  \item\label{the:Basic_properties_of_L_upper_2-torsion:induction} \emph{Induction}\\[1mm]
    Let $H$ be a subgroup of $G$. Let $X$ be a proper finite $H$-$CW$-complex.

    Then $G \times_H X$ is a finite proper $G$-$CW$-complex. It is $\det$-$L^2$-acyclic
    if and only if $X$ is $\det$-$L^2$-acyclic.  If $X$ is $\det$-$L^2$-acyclic, then we
    get
    \[
     \rho^{(2)}(G \times_H X;\caln(G)) = \rho^{(2)}(X;\caln(H));
   \]

   \item\label{the:Basic_properties_of_L_upper_2-torsion:finite_quotient} \emph{Finite kernels}\\[1mm]
       Let $1 \to K \to G \xrightarrow{p} Q \to 1$ be an extension of groups with finite $K$.
      Let $X$ be a proper finite $Q$-$CW$-complex. Let $p^*X$ be the $G$-space obtained from $X$ by restriction with $p$.

      Then $p^*X$ is a finite proper $G$-$CW$-complex. If $X$  or $p^*Y$ is $\det$-$L^2$-acyclic, then both 
      $X$  and  $p^*Y$ are $\det$-$L^2$-acyclic and  get
    \[
     \rho^{(2)}(X;\caln(G)) = \frac{\rho^{(2)}(X;\caln(Q))}{|K|};
   \]

  \item\label{the:Basic_properties_of_L_upper_2-torsion:Poincare_duality} \emph{Poincar\'e duality}\\[1mm]
    If $M$ is a
    proper cocompact smooth $G$-manifold without boundary. Then $M$ inherits from a smooth
    triangulation the structure of a proper finite $G$-$CW$-complex. Suppose that $M$ is
    $\det$-$L^2$-acyclic and its dimension is even. Then we get
     \[
     \rho^{(2)}(M;\caln(G)) = 0.
     \]

    \end{enumerate}
  \end{theorem}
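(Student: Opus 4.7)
The plan is to mimic, piece by piece, the torsionfree proof of~\cite[Theorem~3.93]{Lueck(2002)}, replacing the standard cellular $\IZ G$-chain complex by the finite Hilbert $\caln(G)$-chain complex $L^2(G)\otimes_{\IZ G}C^c_*(X)$ constructed above via the isomorphisms $\varphi_n$ of~\eqref{isos_varphi_n}, and replacing the unconditional vanishing of the Fuglede--Kadison determinant on $\Wh(G)$ by Lemma~\ref{:vanishing_of_the_determinant_map} combined with the condition (DFJ). Independence of the Hilbert chain-complex structure on the choice of $G$-pushouts, together with the standard additivity and multiplicativity of the analytic $L^2$-torsion of a Hilbert $\caln(G)$-chain complex, then promotes each algebraic statement to the geometric one.

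For part~\eqref{the:Basic_properties_of_L_upper_2-torsion:G-homotopy_invariance}, a cellular $G$-homotopy equivalence $f\colon X\to Y$ induces a chain homotopy equivalence of finite based free $\IZ G$-chain complexes after applying $\varphi_n$, so its equivariant Whitehead torsion lives in the proper Whitehead group, which decomposes as $\bigoplus_{(H)}\Wh(W_GH)$ over conjugacy classes of finite subgroups. The (DFJ) hypothesis ensures each factor maps to $\IR$ trivially by Lemma~\ref{:vanishing_of_the_determinant_map}, so the change in $\rho^{(2)}$ vanishes; the simple case does not need (DFJ). This is the step I expect to be the main technical obstacle, since one must justify that the equivariant Whitehead torsion really does assemble from the pieces $\Wh(W_GH)$ and that the $\det^{(2)}$ composite factors through this decomposition, which is implicit in the arguments of~\cite{Lueck(2018)} cited in the proof of Lemma~\ref{:vanishing_of_the_determinant_map}.

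For parts~\eqref{the:Basic_properties_of_L_upper_2-torsion:Sum_formula}--\eqref{the:Basic_properties_of_L_upper_2-torsion:finite_quotient} the arguments are essentially algebraic. The sum formula follows from the short exact sequence $0\to C^c_*(X_0)\to C^c_*(X_1)\oplus C^c_*(X_2)\to C^c_*(X)\to 0$ of finite based $\IZ G$-chain complexes and the corresponding additivity of the analytic $L^2$-torsion, using that $\det$-$L^2$-acyclicity of two terms in a short exact sequence forces it on the third. The product formula is immediate from the tensor-product identity $C^c_*(X\times Z)\cong C^c_*(X)\otimes_{\IC} C^c_*(Z)$ and the analytic identity $\rho^{(2)}(C\otimes D)=\chi^{(2)}(D)\cdot\rho^{(2)}(C)$. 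Restriction uses the $H$-equivariant isometry $L^2(G)\cong\bigoplus_{gH\in G\backslash H}L^2(H)$, whence the $\caln(H)$-dimension and $\det^{(2)}$ on restriction scale uniformly by $[G:H]$. Induction reduces to restriction by the adjunction $G\times_H(-)$, and the factor $|K|$ in the finite-quotient formula comes from the $Q$-equivariant unitary $L^2(G)\cong\ell^2(K)\otimes L^2(Q)$, which multiplies $\det^{(2)}$-contributions at each cell by $|K|$.

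Finally, for part~\eqref{the:Basic_properties_of_L_upper_2-torsion:Poincare_duality}, Poincaré duality gives a simple $G$-homotopy equivalence between $M$ and the dual CW-structure, and the duality isomorphism on the $\caln(G)$-chain level exchanges $n$-chains with $(\dim M - n)$-cochains. The alternating-sum nature of $L^2$-torsion then forces $\rho^{(2)}(M)=(-1)^{\dim M}\rho^{(2)}(M)$, which in even dimensions yields $\rho^{(2)}(M)=0$; invoking part~\eqref{the:Basic_properties_of_L_upper_2-torsion:G-homotopy_invariance} lets us pass from the dual to the original CW-structure, which may require (DFJ) if the duality equivalence is not simple.
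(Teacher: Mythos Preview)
Your treatment of parts~\eqref{the:Basic_properties_of_L_upper_2-torsion:G-homotopy_invariance}, \eqref{the:Basic_properties_of_L_upper_2-torsion:Sum_formula}, \eqref{the:Basic_properties_of_L_upper_2-torsion:passage_to_subgroups_of_finite_index}, \eqref{the:Basic_properties_of_L_upper_2-torsion:induction}, and \eqref{the:Basic_properties_of_L_upper_2-torsion:finite_quotient} matches the paper's approach closely; the paper too routes homotopy invariance through the splitting of the equivariant Whitehead group as $\bigoplus_{(H)}\Wh(W_GH)$ and Lemma~\ref{:vanishing_of_the_determinant_map}, and for the remaining items simply states that the proofs of~\cite[Theorem~3.93]{Lueck(2002)} carry over. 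For the product formula~\eqref{the:Basic_properties_of_L_upper_2-torsion:Product_formula} the paper does not appeal to a tensor-product identity directly but instead reduces by induction over the cells of $Z$ to the case $Z=H/L$ with $L$ finite, then uses~\eqref{the:Basic_properties_of_L_upper_2-torsion:induction} and~\eqref{the:Basic_properties_of_L_upper_2-torsion:passage_to_subgroups_of_finite_index}; your direct analytic argument is plausible but you would need to check carefully that the Hilbert-module structure on $L^2(G\times H)\otimes_{\IZ[G\times H]}C_*^c(X\times Z)$ really is the tensor product of the two factors when the isotropy groups are nontrivial.

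There is a genuine error in your argument for~\eqref{the:Basic_properties_of_L_upper_2-torsion:Poincare_duality}. The identity Poincar\'e duality produces is $\rho^{(2)}(M)=(-1)^{\dim M+1}\rho^{(2)}(M)$, not $(-1)^{\dim M}\rho^{(2)}(M)$: duality exchanges the $n$th differential with (the adjoint of) the $(\dim M - n + 1)$st, and the shift by one in the index flips the sign once more. With your sign the equation is vacuous in even dimensions and would instead force vanishing in odd dimensions, contradicting the hyperbolic examples. The paper avoids redoing this computation in the proper setting altogether: it invokes the known free case from~\cite[Theorem~3.93~(3)]{Lueck(2002)}, passes to a finite-index subgroup acting freely when one exists via~\eqref{the:Basic_properties_of_L_upper_2-torsion:passage_to_subgroups_of_finite_index}, and concedes that the fully general proper case ``requires some additional arguments which we do not present here.'' So even with the corrected sign, your direct duality argument would need the equivariant Poincar\'e duality chain equivalence to be compatible with the Hilbert structures in the proper case, which is precisely the extra work the paper declines to spell out.
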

  \begin{proof}~\ref{the:Basic_properties_of_L_upper_2-torsion:G-homotopy_invariance} We
    only give a brief sketch of the proof.  One key fact is that one has the notion of
    equivariant Whitehead torsion $\tau^G(f)$ for a $G$-homotopy equivalence
    $f \colon X \to Y$ of proper finite $G$-$CW$-complexes which takes values in
    $\bigoplus_{(H)} \Wh(W_GH)$, where $(H)$ runs through the conjugacy classes of finite
    subgroup $H \subseteq G$ and $W_GH$ is the Weyl group $N_GH/H$,
    see~\cite[Chapters~4  and~12]{Lueck(1989)}.  Then the homomorphism
     \[
       \alpha \colon \bigoplus_{(H)} \frac{1}{|H|} \cdot  \ln \circ {\det}^{(2)}_{W_GH}
       \colon \bigoplus_{(H)} \Wh(W_GH) \to \IR
      \]
      sends $\tau^G(f)$ to $\rho^{(2)}(X;\caln(G)) - \rho^{(2)}(Y;\caln(G))$,
      where ${\det}^{(2)}_{W_GH}$ has been defined in~\eqref{det_L}.
      If $f$ is a simple $G$-homotopy equivalence, then $\tau^G(f)$ vanishes.
      It remains to show  that $\alpha$ is trivial if  $G$ satisfies condition (DFJ).
      
      If $W_GH$ satisfies the Determinant Conjecture or is a Farrell-Jones group for every
      finite subgroup $H \subseteq G$, then the vanishing of $\alpha$ follows from
      Lemma~\ref{:vanishing_of_the_determinant_map}. If $G$ satisfies the Determinant
      Conjecture, then $W_GH$ satisfies the Determinant Conjecture for every finite
      subgroup $H \subseteq G$
      by~\cite[Theorem~3.14~(6) on page~129 and Lemma~13.45~(7) on  page~473]{Lueck(2002)}.
      If $G$ is a Farrell-Jones group and $H \subseteq G$ is a
      finite subgroup, we know that $N_GH$ is a Farrell-Jones group but we do not know in
      general that $W_GH$ is a Farrell-Jones-group.  Now suppose that $K$ is a torsionfree
      subgroup of $G$ of finite index.  If $K$ satisfies the Determinant Conjecture, then
      $G$ satisfies the Determinant Conjecture by~\cite[Theorem~3.14~(5) on
      page~128]{Lueck(2002)}.  If $K$ is a Farrell-Jones group, then
      $K \cap N_GH \subseteq N_GH$ is a Farrell-Jones group and is isomorphic to a
      subgroup of $W_GH$ of finite index which implies that $W_GH$ is a Farrell-Jones
      group, see~\cite[Theorem~16.5~(iia) and~(iif) on page~503]{Lueck(2025book)}.
      \\[1mm]~\ref{the:Basic_properties_of_L_upper_2-torsion:Sum_formula}%
~\ref{the:Basic_properties_of_L_upper_2-torsion:passage_to_subgroups_of_finite_index}
      and~\ref{the:Basic_properties_of_L_upper_2-torsion:induction} The proofs
      of~\cite[Theorem~3.93~(2),~(5), and~(6) on page~161]{Lueck(2002)} carry over.
      \\[1mm]~\ref{the:Basic_properties_of_L_upper_2-torsion:Product_formula}
      We can replace $\chi^{(2)}(Z;\caln(H))$ by the orbifold Euler characteristic
\[\chi_{\operatorname{orb}}(Z) = \sum_{e} (-1)^{\dim(e)} \cdot \frac{1}{|H_e|},
\]
where $e$ runs through the equivariant cells of $Z$, because of
$\chi^{(2)}(Z;\caln(H)) = \chi_{\operatorname{orb}}(Z)$, see~\cite[Subsection~6.6.1]{Lueck(2002)}. Using
assertions~\ref{the:Basic_properties_of_L_upper_2-torsion:G-homotopy_invariance}
and~\ref{the:Basic_properties_of_L_upper_2-torsion:Sum_formula} one can reduce the claim
to the special case $Z = H/L$ for any finite subgroup $L \subseteq H$ by induction over
the dimension of $Z$ and subinduction over the number of top-dimensional equivalent
cells of $Z$. We get an isomorphism of proper finite $G \times H$-$CW$-complexes
$G \times H \times_{G \times L} \pr_L^* X \xrightarrow{\cong} Y \times H/L$ by sending
$((g,h),x)$ to $(gx,hL)$, where 
$\pr_L^* X$ is the proper finite $G \times L$-$CW$-space obtained from $X$ by restriction with the projection 
$\pr_L\colon  G \times L \to G$. We conclude from
assertion~\ref{the:Basic_properties_of_L_upper_2-torsion:induction}  that it suffices to show
the claim for the finite proper $G \times L$-$CW$-complex $\pr_L^*X$.  This
follows from assertion~\ref{the:Basic_properties_of_L_upper_2-torsion:passage_to_subgroups_of_finite_index}  applied
to $G \subseteq G \times L$.
      \\[1mm]~\ref{the:Basic_properties_of_L_upper_2-torsion:finite_quotient}
      This follows from~\cite[Lemma~13.45 on page~473]{Lueck(2002)}.
      \\[1mm]~\ref{the:Basic_properties_of_L_upper_2-torsion:Poincare_duality} This is
      proved in~\cite[Theorem~3.93~(3) on page~161]{Lueck(2002)} under the assumption that
      the $G$-action is free.  If $G$ contains a subgroup of finite index which acts
      freely on $M$ then the claim follows from this special case and
      assertion~\ref{the:Basic_properties_of_L_upper_2-torsion:passage_to_subgroups_of_finite_index}.
      The general case requires some additional arguments which we do not present
      here.
    \end{proof}

         For a group $G$ define its \emph{$n$th  $L^2$-Betti number} $b_n^{(2)}(G)$ to be $b_n^{(2)}(\eub{G}; \caln(G))$
        for any $G$-$CW$-model for $\eub{G}$. This is the same as $b_n^{(2)}(EG; \caln(G))$ for any $G$-$CW$-model
        of $EG$, see~\cite[Theorem~6.54~(1) ans (2) on page~265]{Lueck(2002)}.
        
    \begin{definition}[$L^2$-acyclic group]\label{def:L2_acyclic_group}
    A group $G$ is called \emph{$L^2$-acyclic} if $b_p^{(2)}(G)$ vanishes for all $n \in \IZ^{\ge 0}$.
  \end{definition}

    \begin{definition}[$\FIN$-finite group]\label{def:FIN-finite_group}
    A group $G$ is called \emph{$\FIN$-finite} if it has a has a finite $G$-$CW$-model  for
    the classifying space of proper $G$-actions $\eub{G} = \EGF{G}{\FIN}$.
  \end{definition}

  \begin{definition}[$\det$-$L^2$-acyclic group]\label{def:det_L2-acyclic_group}
    A  $\FIN$-finite group $G$ is called \emph{$\det$-finite} if one (and hence every)
    finite $G$-$CW$-model for $\eub{G}$ is of determinant class.  It is called
    $\det$-$L^2$-acyclic if its both $L^2$-acyclic and $\det$-finite, or, equivalently,
    one (and hence every) finite $G$-$CW$-modle for $\eub{G}$ is $\det$-$L^2$-acyclic.
  \end{definition}

    \begin{definition}[$L^2$-torsion of groups]\label{the:L2-torsion_of_groups}
      Suppose that the $\FIN$-finite  group $G$ is $\det$-$L^2$-acyclic and satisfies condition (DFJ) appearing in 
      Definition~\ref{def:Condition_(DFJ}
      
      Define \emph{its $L^2$-torsion}
      \[
      \rho^{(2)}(G) = \rho^{(2)}(X;\caln(G))
       \]
       for any finite $G$-$CW$-model $X$ for $\eub{G}$.
     \end{definition}

     This is independent of the choice of the finite $G$-$CW$-model for $\eub{G}$
     by Theorem~\ref{the:Basic_properties_of_L_upper_2-torsion}~%
\ref{the:Basic_properties_of_L_upper_2-torsion:G-homotopy_invariance}.


\typeout{---------- Section 3:  Blowing up orbits by classifying spaces of families   ---------------}

\section{Blowing up orbits by classifying spaces of families}%
\label{sec:Blowing_up_orbits_by_classifying_spaces_of_families}

Consider a family of subgroups $\calf$.  Let $X$ be a $G$-$CW$-complex with skeletal filtration
\[
X_{-1} =\emptyset \subseteq X_0 \subseteq X_2 \subseteq \cdots \subseteq X = \colim_{n \to \infty} X_n.
\]
Let $I_n$ be the set of open $n$-cells of $X/G$.
Choose for every $n \in \IZ^{\ge 0}$ a cellular $G$-pushout

\begin{equation}
\xymatrix@!C=11em{\coprod_{i_n \in I_n} G/H^n_{i_n} \times S^{n-1} \ar[r]^-{\coprod_{i_n \in I_n} q_{i_n}^n} \ar[d]
    &
    X_{n-1} \ar[d]
    \\
    \coprod_{i_n \in I_n} G/H^n_{i_n} \times D^n \ar[r]^-{\coprod_{i_n \in I_n} Q_{i_n}^n} 
    &
    X_n.
  }
  \label{cellular_G-pushout_general}
\end{equation}
For each $n \in \IZ^{\ge 0}$ and $i_n  \in I_n $ let $E^n_{i_n}$ be an $H_i^n$-$CW$-complex
which is a model for $\EGF{H_i^n}{\calf|_{H^n_i}}$, where  $\calf|_{H^n_i}$ is the family of subgroups of
$H^n_{i_n}$ given by $\{K \cap H^n_{i_n} \mid K \in \calf\}$. Fix a model $Z$ for $\EGF{G}{\calf}$.
We get a filtration by $G$-cofibrations
  \begin{equation}
    Z\times X_{-1} =\emptyset \subseteq Z\times X_0 \subseteq Z\times X_2 \subseteq \cdots
    \subseteq Z\times X = \colim_{n \to \infty} (Z\times X_n).
    \label{filtration_of_EGF(G)(calfin)_times_X}
  \end{equation}
  and $G$-pushouts
  \begin{equation}
    \xymatrix@!C=15em{\coprod_{i_n \in I_n} Z  \times G/H^n_{i_n} \times S^{n-1}
      \ar[r]^-{\coprod_{i_n \in I_n} \id_{Z \times q_{i_n}^n}} \ar[d]
    &
    Z \times X_{n-1} \ar[d]
    \\
    \coprod_{i_n \in I_n} Z\times G/H_{i_n} \times D^n \ar[r]^-{\coprod_{i_n \in I_n} \id_{Z} \times Q_{i_n}^n}
    &
    Z\times X_n,
  }
  \label{EGF(G)(calf)_times_cellular_G-pushout}
\end{equation}
where here and in the sequel we use on a product of two $G$-spaces the diagonal $G$-action.
Let $Z|_{H_i^n}$ be the restriction of the $G$-$CW$-complex $Z$ to $H^n_{i_n} \subseteq G$.
We have the $G$-homeomorphism
\[G \times_{H^n_{i_n}} Z|_{H_i^n} \xrightarrow{\cong} Z\times G/H_i^n,
  \quad (g,z) \mapsto (gz,gH_i^n).
\]
Since $Z|_{H_i^n}$ is a model for $\EGF{H^n_{i_n}}{\calf|_{H^n_{i_n}}}$,
we can choose a $H_i^n$-homotopy equivalence $E^n_{i_n} \xrightarrow{\simeq} Z|_{H_i^n}$.
Thus we get a $G$-homotopy equivalence 
\[
  \varphi^n_{i_n} \colon G \times_{H^n_{i_n}} E^n_{i_n}\xrightarrow{\simeq} Z \times G/H^n_{i_n}.
 \]
 Next we construct a sequence of inclusions of $G$-$CW$-complexes
 \[
 Y_{-1} =\emptyset \subseteq Y_0 \subseteq Y_2 \subseteq Y_3 \subseteq  \cdots
\]
and cellular $G$-homotopy equivalences $f_n \colon Y_n \to Z \times X_n$
satisfying $f_{n+1}|_{Z\times X_n} = f_n$ by induction over $n = -1,0,1,2, \ldots$. The
induction beginning is trivial as $Y_{-1} = Z\times X_{-1} = \emptyset$ holds. The
induction step from $(n-1)$ to $n \ge 0$ is done as follows. Since
$f_{n-1} \colon Y_{n-1} \to Z \times X_{n-1}$ is a $G$-homotopy equivalence, we can find a
cellular $G$-map
\[
  \mu^n  \colon  \coprod_{i_n \in I_n}G \times_{H^n_{i_n}} E^n_{i_n} \times S^{n-1} \to Y_{n-1}
\]
and a $G$-homotopy
\[
h^n \colon \Bigl(\coprod_{i_n \in I_n}G \times_{H^n_{i_n}} E^n_{i_n} \times S^{n-1}\Bigr) \times I \to Z \times X_{n-1}
\]
satisfying $h^n_0 = f_{n-1} \circ \mu_{n-1}$ and
$h^n_1 = \coprod_{i_n \in I_n} (\id_Z \times q^n_{i_n})\circ (\varphi^n_{i_n} \times \id_{S^{n-1}})$.
Define the $G$-$CW$-complex $Y_n$ by the $G$-pushout
\begin{equation}
  \xymatrix{\coprod_{i \in I_n} G \times_{H^n_{i_n}} E^n_{i_n} \times S^{n-1} \ar[r]^-{\mu^n} \ar[d]
    &
    Y_{n-1} \ar[d]
    \\
    \coprod_{i \in I_n} G \times_{H^n_{i_n}} E^n_{i_n} \times D^n \ar[r]
    &
    Y_n.}
  \label{G-pushout_for_Y_n}
\end{equation}
Consider the following commutative diagram
\[
  \xymatrix@!C=13em{\coprod_{i \in I_n} G \times_{H^n_{i_n}} E^n_{i_n} \times D^n \ar[d]_{J_0} 
    &
    \coprod_{i \in I_n} G \times_{H^n_{i_n}} E^n_{i_n} \times S^{n-1} \ar[l]  \ar[d]_{j_0} \ar[r]^-{\mu^n}
    &
    Y_{n-1}  \ar[d]_{f_{n-1}}
    \\
    \Bigl(\coprod_{i \in I_n} G \times_{H^n_{i_n}} E^n_{i_n} \times D^n\Bigr) \times I
    &
    \Bigl(\coprod_{i \in I_n} G \times_{H^n_{i_n}} E^n_{i_n} \times S^{n-1} \Bigr) \times I \ar[l] \ar[r]_-{h^n}
    &
    Z \times X_{n-1}    
    \\
    \coprod_{i \in I_n} G \times_{H^n_{i_n}} E^n_{i_n} \times D^n \ar[d]_{\coprod_{i \in I_n} \varphi^n_{i_n} \times \id_{D^n}}
    \ar[u]^{J_1}
    &
    \coprod_{i \in I_n} G \times_{H^n_{i_n}} E^n_{i_n} \times S^{n-1}
    \ar[l] \ar[r]_-{h^n_1} \ar[d]_{\coprod_{i \in I_n} \varphi^n_{i_n} \times \id_{S^{n-1}}}
    \ar[u]^{j_1}
    &
    Z \times X_{n-1}     \ar[u]^{\id_{Z \times X_{n-1}}}  \ar[d]_{\id_{Z \times X_{n-1}}}
    \\
    \coprod_{i_n \in I_n} Z \times G/H_{i_n}^n \times D^n 
    &
    \coprod_{i_n \in I_n} Z \times G/H_{i_n}^n \times S^{n-1} \ar[l] \ar[r]_-{\coprod_{i_n \in I_n} \id_{Z \times q_{i_n}^n}} 
    &
    Z \times X_{n-1}    
  }
  \]
  where $j_k$ and $J_k$ come from the inclusion $\pt \to I = [0,1]$ with image $\{k\}$ for
  $k = 0,1$.  The pushout of the uppermost row is $Y_n$, see~\eqref{G-pushout_for_Y_n},
  whereas the pushout of the lowermost row is $Z \times X_n$,
  see~\eqref{EGF(G)(calf)_times_cellular_G-pushout}.  Let $T_2$ be the pushout of the
  second row and $T_3$ be the pushout of the third row.  Then we have pairs
  $(T_2,Z \times X_{n-1})$ and $(T_3,Z \times X_{n-1})$. The diagram above yields $G$-maps
  of pairs
  \begin{eqnarray*}
    (a,f_{n-1}) \colon (Y_n,Y_{n-1})
    & \to &
    (T_2,Z \times X_{n-1});
    \\
    (b,\id_{Z \times X_{n-1}})
    \colon
    (T_3,Z \times X_{n-1})
    & \to &
    (T_2,Z \times X_{n-1});
    \\
    (c,\id_{Z \times X_{n-1}})
    \colon
    (T_3,Z \times X_{n-1})
    & \to &
            (Z \times X_n,Z \times X_{n-1}).
  \end{eqnarray*}
  Since in the diagram above all vertical arrows are $G$-homotopy equivalences and the
  left horizontal arrow in each row is a $G$-cofibration, these three $G$-maps are
  $G$-homotopy equivalences of pairs and we can find a $G$-homotopy equivalence of pairs
  \[
    (b',\id_{Z \times X_{n-1}}) \colon (T_2,Z \times X_{n-1}) \to    (T_3,Z \times X_{n-1}).
  \]
  Now define the desired $G$-homotopy equivalence of pairs
  \[
    (f_n,f_{n-1}) \colon (Y_n,Y_{n-1}) \to (Z \times X_n, Z \times X_{n-1})
  \]
  by the composite $ (c,\id_{Z \times X_{n-1}}) \circ (b',\id_{Z \times X_{n-1}})  \circ (a,f_{n-1})$.
  If we put $Y = \colim_{n \to \infty} Y_n$,
  we obtain a $G$-$CW$-complex $Y$ and a $G$-homotopy equivalence
  \begin{equation} f = \colim_{n \to \infty} f_n \colon Y \xrightarrow{\simeq_G} Z \times X.
    \label{_G-homotopy_equivalence_f_colon_Y_to_Z_times_X}
  \end{equation}

  Next we collect the basic properties of this construction.

  \begin{theorem}[Blowing up]\label{the:properties_of_the_blow_up}
    \
    \begin{enumerate}
    \item\label{the:properties_of_the_blow_up:type_d} Consider a number
      $l \in \IZ^{\ge 0}$. Suppose for $n \in \IZ^{\ge 0}$ and $i_n \in I_n$ that the
      $H^n_{i_n}$-$CW$-complex $E^n_{i_n}$ has finite $(l -n)$ skeleton and $X$ has a
      finite $l$-skeleton.
      
      Then the $G$-$CW$-complex $Y$ has a finite $l$-skeleton;
    
    \item\label{the:properties_of_the_blow_up:finite_type} Suppose for $n \in \IZ^{\ge 0}$
      and $i_n \in I_n$ that the $H^n_{i_n}$-$CW$-complex $E^n_{i_n}$ is of finite type and the
      $G$-$CW$-complex $X$ is of finite type.

      Then the $G$-$CW$-complex $Y$ is of finite type;

    \item\label{the:properties_of_the_blow_up:dimension} Consider a number
      $d \in \IZ^{\ge 0}$. Suppose for $n \in \IZ^{\ge 0}$ and $i_n \in I_n$ that the
      $H^n_{i_n}$-$CW$-complex $E^n_{i_n}$ has dimension $\le (l-d)$ and $X$ has a
      dimension $\le d$.
     
      Then the $G$-$CW$-complex $Y$ has  dimension $\le d$.

    \item\label{the:properties_of_the_blow_up:finite} If for all $n \in \IZ^{\ge 0}$ and
      $i_n \in I_n$ the $H^n_{i_n}$-$CW$-complex $E^n_{i_n}$ is finite and the
      $G$-$CW$-complex $X$ is finite, then the $G$-$CW$-complex $Y$ is finite;

    \item\label{the:properties_of_the_blow_up:proper} If each element in $\calf$ is
      finite, then $Y$ and $Z \times X$ are proper.

    \item\label{the:properties_of_the_blow_up_L_upper_(2)-acyclic} If each of the
      $H^n_{i_n}$-$CW$-complexes $E^n_{i_n}$ is $L^2$-acyclic, then the $G$-$CW$-complex
      $Z \times X$ is $L^2$-acyclic;

    \item\label{the:properties_of_the_blow_up:L_upper_(2)_torsion}
      Suppose  that the following conditions are satisfied:
      \begin{enumerate}
       \item $G$ satisfies condition~(DFJ), see Definition~\ref{def:Condition_(DFJ};
       \item Every element in $\calf$ is finite;
       \item For all $n \in \IZ^{\ge 0}$ and $i_n \in I_n$ the $H^n_{i_n}$-$CW$-complex $E^n_{i_n}$ is
      finite and $\det$-$L^2$-acyclic;
    \item $X$ is a finite $G$-$CW$-complex.
    \end{enumerate}
    
      Then the $G$-$CW$-complex $Y$ is finite, proper, and
      $\det$-$L^2$-acyclic, and we get for its $L^2$-torsion
      \[
        \rho^{(2)}(Y;\caln(G))
        = \sum_{n \ge 0} \sum_{i \in I_n} (-1)^n \cdot \rho^{(2)}(E^n_{i_n}; \caln(H^n_{i_n})).
      \]
      Suppose additionally that $X^H$ is contractible for any finite subgroup $H \subseteq G$. Then
      each group $H_i^{i_n}$ satisfies condition (DFJ), 
        there is a finite $G$-$CW$-model for $\eub{G}$ which is $\det$-$L^2$-acyclic, and we get
      \[
        \rho^{(2)}(G)
        = \sum_{n \ge 0} \sum_{i \in I_n} (-1)^n \cdot \rho^{(2)}(H^n_{i_n}).
      \]
      
      \end{enumerate}
    \end{theorem}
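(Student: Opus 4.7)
The plan is to prove the seven assertions in tandem by induction on the skeleta $Y_n$, exploiting the $G$-pushout~\eqref{G-pushout_for_Y_n} together with Theorem~\ref{the:Basic_properties_of_L_upper_2-torsion}. The $G$-cells of $Y_n$ not lying in $Y_{n-1}$ are of the form $G \times_{H^n_{i_n}} (e \times \mathring{D}^n)$ where $e$ is an equivariant cell of $E^n_{i_n}$, so such a cell has dimension $\dim(e)+n$ and its isotropy is a subgroup of the isotropy of $e$ in $H^n_{i_n}$. From this bookkeeping, assertions~\ref{the:properties_of_the_blow_up:type_d}--\ref{the:properties_of_the_blow_up:finite} are immediate: a cell of dimension $\le l$ in $Y$ can only arise from a cell of dimension $\le l-n$ in some $E^n_{i_n}$ with $n \le l$, and under the stated finiteness hypotheses the set of such cells is finite in each dimension. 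For assertion~\ref{the:properties_of_the_blow_up:proper}, if each $K\in\calf$ is finite then every cell stabiliser of $E^n_{i_n}$ lies in $\calf|_{H^n_{i_n}}$ and is finite, while $G_{(z,x)} = G_z \cap G_x \subseteq G_z$ is finite since $Z$ has isotropy in $\calf$.

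For~\ref{the:properties_of_the_blow_up_L_upper_(2)-acyclic} I invoke $G$-homotopy invariance of $L^2$-Betti numbers together with $Y \simeq_G Z\times X$ from~\eqref{_G-homotopy_equivalence_f_colon_Y_to_Z_times_X} to reduce to proving each $Y_n$ is $L^2$-acyclic. This follows inductively: the induction formula identifies $b^{(2)}_*(G \times_{H^n_{i_n}} E^n_{i_n};\caln(G))$ with $b^{(2)}_*(E^n_{i_n};\caln(H^n_{i_n}))$, which vanishes by hypothesis, and the product and sum formulas (in their $L^2$-Betti number versions) then yield the vanishing of $b^{(2)}_*(Y_n;\caln(G))$ for all $n$.

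Part~\ref{the:properties_of_the_blow_up:L_upper_(2)_torsion} is proved by the same induction at the level of $L^2$-torsion. Using Theorem~\ref{the:Basic_properties_of_L_upper_2-torsion}(iii) and (v) one computes
\[
\rho^{(2)}\bigl(G \times_{H^n_{i_n}} E^n_{i_n} \times D^n;\caln(G)\bigr) = \rho^{(2)}(E^n_{i_n};\caln(H^n_{i_n}))
\]
and
\[
\rho^{(2)}\bigl(G \times_{H^n_{i_n}} E^n_{i_n} \times S^{n-1};\caln(G)\bigr) = \bigl(1+(-1)^{n-1}\bigr)\cdot\rho^{(2)}(E^n_{i_n};\caln(H^n_{i_n})),
\]
so the sum formula applied to the pushout~\eqref{G-pushout_for_Y_n} gives
\[
\rho^{(2)}(Y_n;\caln(G)) - \rho^{(2)}(Y_{n-1};\caln(G)) = (-1)^n\sum_{i_n \in I_n}\rho^{(2)}(E^n_{i_n};\caln(H^n_{i_n})),
\]
and telescoping across $n$ yields the claimed identity. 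For the final statement, the additional assumption that $X^H$ is contractible for every finite $H\subseteq G$, combined with the properness of $Z\times X$, ensures (taking $\calf=\FIN$ and $Z = \eub{G}$) that $Z\times X$ and hence $Y$ is a model for $\eub{G}$; condition~(DFJ) passes from $G$ to each subgroup $H^n_{i_n}$, so Definition~\ref{the:L2-torsion_of_groups} together with Theorem~\ref{the:Basic_properties_of_L_upper_2-torsion}(i) give $\rho^{(2)}(G)=\rho^{(2)}(Y;\caln(G))$. The principal technical point I anticipate is verifying that $\det$-$L^2$-acyclicity (as opposed to mere $L^2$-acyclicity) propagates through the inductive pushout; this relies on the multiplicativity of Fuglede--Kadison determinants underlying Theorem~\ref{the:Basic_properties_of_L_upper_2-torsion}(ii)--(v), combined with condition~(DFJ) and Lemma~\ref{:vanishing_of_the_determinant_map}.
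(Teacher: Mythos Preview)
Your proposal is correct and follows essentially the same approach as the paper's proof, which is extremely terse: the paper disposes of (i)--(v) in one sentence (``follow directly from the construction of $Y$ by inspecting the $G$-pushouts~\eqref{G-pushout_for_Y_n}''), cites a theorem from~\cite{Lueck(2002)} for (vi), and for (vii) says only that it ``follows from Theorem~\ref{the:Basic_properties_of_L_upper_2-torsion} and the fact that the projection $\eub{G} \times X \to \eub{G}$ is a $G$-homotopy equivalence if $X^H$ is contractible for every finite subgroup $H$.'' Your inductive telescoping via the sum, product, and induction formulas is exactly what underlies that one-line reference, and your parenthetical observation that the final assertion implicitly requires $\calf = \FIN$ (so that $Z = \eub{G}$ and $E^n_{i_n} = \eub{H^n_{i_n}}$) correctly identifies an assumption the paper leaves tacit.
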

    \begin{proof}
      Assertions~\ref{the:properties_of_the_blow_up:type_d},~\ref{the:properties_of_the_blow_up:finite_type},%
\ref{the:properties_of_the_blow_up:dimension},~\ref{the:properties_of_the_blow_up:finite}
      and~\ref{the:properties_of_the_blow_up:proper}
      follow directly from the construction of $Y$ by inspecting the $G$-pushouts~\eqref{G-pushout_for_Y_n}.

      Assertion~\ref{the:properties_of_the_blow_up_L_upper_(2)-acyclic} follows
      from~\cite[Theorem~6.54 on page~265]{Lueck(2002)}.

      Assertion~\ref{the:properties_of_the_blow_up:L_upper_(2)_torsion} follows from
      Theorem~\ref{the:Basic_properties_of_L_upper_2-torsion} and the fact that the
      projection $\eub{G} \times X \to \eub{G}$ is a $G$-homotopy equivalence if $X^H$ is
      contractible for every finite subgroup $H$.
\end{proof}

We extract out a version of (viii) which appeared in the introduction.

\thmCombination

\begin{example}[Group extensions]\label{exa:group_extension_general}
  Suppose we can write $G$ as an extension $1 \to K \to G \xrightarrow{p} Q \to 1$, and
  there is a finite model for $\eub{Q}$.  Then we can consider the $G$-$CW$-complex
  $X = p^*\eub{Q}$ obtained from the finite $Q$-$CW$-complex $\eub{Q}$ by restriction with
  $p$.  Obviously $X^H = \eub{Q}^{p(H)}$ is contractible for any finite subgroup
  $H \subseteq G$.  There is a bijective correspondence between the open equivariant cells
  of $X$ and the open equivariant cells of $\eub{Q}$ given by $c \mapsto p(c)$. We have
  $\dim(c) = \dim(p(c))$ and $G_c = p^{-1}(Q_{p(c)})$.  Hence $G_c$ contains $K$ as a
  subgroup of finite index $[G_c : K] = |Q_{p(c)}|$.  Suppose that $G_c$ has a finite $G_c$-model
  for $\eub{G_c}$  which is $\det$-$L^2$-acyclic. Assume that $G$ satisfies condition (DFJ), see
  Definition~\ref{def:Condition_(DFJ}. We conclude from
  Theorem~\ref{the:Basic_properties_of_L_upper_2-torsion}~%
\ref{the:Basic_properties_of_L_upper_2-torsion:passage_to_subgroups_of_finite_index}
  that $|Q_{p(c)}| \cdot \rho^{(2)}(G_c) = \rho^{(2)}(K)$. Define the orbifold Euler
  characteristic of $\eub{Q}$ to be
  \begin{equation}
    \chi_{\operatorname{orb}}(\eub{Q})  =\sum_{c} (-1)^{\dim(e)} \cdot \frac{1}{|Q_{p(c)}|}.
    \label{orbifold_Euler_characteristic}
  \end{equation}
  If $\chi^{(2)}(\eub{Q};\caln(Q))$ is the $L^2$-Euler characteristic, we get,
  see~\cite[Subsection~6.6.1]{Lueck(2002)}.
  
  \begin{equation} \chi_{\operatorname{orb}}(\eub{Q}) = \chi^{(2)}(\eub{Q};\caln(Q)).
    \label{chi_L2_is_chi_orbi}
  \end{equation}
        Theorem~\ref{the:properties_of_the_blow_up}~\ref{the:properties_of_the_blow_up:L_upper_(2)_torsion}
      implies that both $K$ and $G$ have a $\det$-$L^2$-acyclic finite proper  model for their
      classifying space of proper actions and we have      
      \[\rho^{(2)}(G) = \chi_{\operatorname{orb}}(\eub{Q}) \cdot \rho^{(2)}(K)
        = \chi^{(2)}(\eub{Q};\caln(Q)) \cdot \rho^{(2)}(K).
    \]
    
    In particular we get $\rho^{(2)}(G) = 0$ if $\chi^{(2)}(\eub{Q};\caln(Q))$ vanishes.
  \end{example}

  \begin{example}[Graph of groups]\label{exa:graph_of_groups}
    Let $Y$ be a connected non-empty graph in the sense of~\cite[Definition~1 in
    Section~2.1 on page~13]{Serre(1980)}.  Let $(G,Y)$ be a graph of groups in the sense
    of~\cite[Definition~8 in Section~4.4. on page~37]{Serre(1980)}.  (In the sequel we use
    the notation of~\cite{Serre(1980)}).  Denote by $\overline{\operatorname{edge}(Y)}$
    the quotient of $\operatorname{edge}(Y)$ under the involution
    $y \mapsto \widetilde{y}$. Note that $y$ considered as a $CW$-complex has the set
    $\operatorname{vert}(T)$ as set of $0$-cells and $\overline{\operatorname{edge}(Y)}$
    as set of $1$-cells. Since by definition $G_{y} = G_{\widetilde{y}}$ holds, we can
    define for $\overline{y} \in \overline{\operatorname{edge}(Y)}$ the group
    $G_{\overline{y}}$ to be $G_y$ for a representative $y \in\operatorname{edge}(Y)$ of
    $\overline{y}$.
             
    Let $P_0$ be an element in $\operatorname{vert}(Y)$ and let $T$ be a maximal tree of
    $Y$.  Let $\pi_1(G,Y,P_0)$ and $\pi_1(G,Y,T)$ be the fundamental groups in the sense
    of~\cite[page~42]{Serre(1980)}. Note that $\pi_1(G,Y,P_0)$ and $\pi_1(G,Y,T)$ are
    isomorphic, see~\cite[Proposition~20 in Section~5.1. on page~44]{Serre(1980)}. Then
    there exists
    \begin{itemize}
    \item A graph $\widetilde{X} = \widetilde{X}(G,Y,T)$;
    \item An action of $\pi = \pi_1(G,Y,T)$ on $\widetilde{X}$;
    \item A morphism $p \colon \widetilde{X} \to X$ inducing an isomorphism
      $\pi\backslash \widetilde{X} \to Y$,
    \end{itemize}
    such that the following is true:
    \begin{itemize}
    \item $\widetilde{X}$ is a tree;
    \item $\widetilde{X}^H$ is contractible for every finite subgroup $H \subseteq \pi$;
    \item $\widetilde{X}$ is a $1$-dimensional $\pi$-$CW$-complex for which there exists
      $\pi$-pushout
      \[
        \xymatrix{\coprod_{P \in \operatorname{vert}(T)} \pi/\pi_P \times S^0 \ar[r]
          \ar[d] & \coprod_{\overline{y} \in \overline{\operatorname{edge}(Y)}} \pi/\pi_y
          \ar[d]
          \\
          \coprod_{P \in \operatorname{vert}(T)} \pi/\pi_P \times D^1 \ar[r] &
          \widetilde{X} }
      \]
      such that $\pi_P \cong G_P$ for $P \in \operatorname{vert}(T)$ and
      $\pi_{\overline{y}} = G_{\overline{y}}$ for
      $\overline{y} \in \overline{\operatorname{edge}(Y)}$ holds.
    \end{itemize}
      
    All these claims follows from~\cite[Section~5.3 and Theorem~15 in Section~6.1 on
    page~58]{Serre(1980)}.

    Now suppose that $Y$ is finite, each of the groups $G_P$ for $P \in \operatorname{vert}(T)$ and
    $G_{\overline{y}}$ for $\overline{y} \in \overline{\operatorname{edge}(Y)}$ has a
    finite model for its classifying space for proper actions which is
    $\det$-$L^2$-acyclic, and $\pi$ satisfies condition (DFJ).

    Then we conclude from Theorem~\ref{the:application_to_automorphism} that the groups
    $G_P$ for $P \in \operatorname{vert}(T)$ and $G_{\overline{y}}$ for
    $\overline{y} \in \overline{\operatorname{edge}(Y)}$ satisfy condition (DFJ), there is
    a finite $\pi$-$CW$-model for $\eub{\pi}$ which is $\det$-$L^2$- acyclic, and we get
    \[
      \rho^{(2)}(\pi)= \sum_{P \in \operatorname{vert}(T)} \rho^{(2)}(G_P)
      -\sum_{\overline{y} \in \operatorname{vert}(T)} \rho^{(2)}(G_{\overline{y}}).
    \]
  \end{example}

  \begin{example}[Amalgamated Products]\label{exa:Amalgamated_products}
    Let $G_0$ be a common subgroup of the group $G_1$ and the group $G_2$. Denote by
    $G = G_1 \ast_{G_0} G_2$ the amalgamated product.  Suppose that there is a
    $\det$-$L^2$-acyclic finite $G_i$-$CW$-model for $\eub{G_i}$ for $i = 0,1,2$ and that $G$
    satisfies condition (DFJ).  Then $G_i$ satisfies condition (DFJ) for $i =0,1,2$, there
    is a $\det$-$L^2$-acyclic finite $G$-$CW$-module for $\eub{G}$, and we get
    \[
      \rho^{(2)}(G) = \rho^{(2)}(G_1) + \rho^{(2)}(G_2) - \rho^{(2)}(G_0).
    \]
    This follows from Example~\ref{exa:graph_of_groups} applied to the graph of groups
    associated to $G_1 \ast_{G_0} G_2$, see~\cite[page~43]{Serre(1980)}.
  \end{example}


\typeout{---------- Section 5:  $L^2$-torsion of selfhomotopy equivalences  ---------------}

\section{\texorpdfstring{$L^2$}{L2}-torsion of a selfhomotopy equivalence}%
\label{sec:L2-torsion_of_a_selfhomotopy_equivalence}

For the remainder of this section we fix a group automorphism
$\Phi \colon G \xrightarrow{\cong} G$.

Let $G \times_{\Phi} \IZ$ be the associated semidirect product.  In the sequel $t \in \IZ$
is a fixed generator of $\IZ$. Then we can write every element in $G \times_{\Phi} \IZ$
uniquely as $gt^n$ for $g \in G$ and $n \in \IZ$ and the multiplication in
$G \rtimes_{\Phi} \IZ$ is given by
$g_0t^{n_0}g_1t^{n_1} = g_0\Phi^{n_0}(g_1)t^{n_0 + n_1}$.

Given two $G$-spaces $X$ and $Y$, a \emph{$\Phi$-map} $f \colon X \to Y$ is a map $f$ from
$X$ to $Y$ satisfying $f (gx) = \Phi(g)f(x)$ for $g \in G$ and $x \in X$. We call $f$ a
\emph{$\Phi$-homotopy equivalence} if there exists a $\Phi^{-1}$-map $f' \colon Y \to X$
such that $f' \circ f$ is $G$-homotopic to $\id_X$ and $f \circ f'$ is $G$-homotopic to
$\id_Y$. We call $f$ a \emph{weak $\Phi$-homotopy equivalence} if
$f^H \colon X^H \to Y^{\Phi(H)}$ is a weak homotopy equivalence for every subgroup
$H \subseteq G$.

A \emph{$G$-$CW$-approximation} $(X,a)$ of a $G$-space $Y$ is a $G$-$CW$-complex $X$
together with a weak $G$-homotopy equivalence $a \colon X \to Y$. Every $G$-space $Y$
admits a $G$-$CW$-approximation, see~\cite[Proposition~2.3 on page~35]{Lueck(1989)}.
Given two $G$-$CW$-approxi\-ma\-tions $(X,a)$ and $(X',a')$ there exists a cellular
$G$-homotopy equivalence $s \colon X \to X'$ which is uniquely characterized up to $G$-homotopy by the
property that $a' \circ s$ and $a$ are $G$-homotopic.  This follows from the Equivariant
Approximation Theorem, see~\cite[Theorem~2.1 on page~32]{Lueck(1989)}, and the
Equivariant Whitehead Theorem, see~\cite[Theorem~2.4 on page~36]{Lueck(1989)}.

\begin{definition}[$\FIN$-finite $G$-space]\label{def:FIN-finite_G-space}
    A $G$-space $Y$ is called \emph{$\FIN$-finite}, if it possesses a $G$-$CW$-approximation $a \colon X \to Y$
    with a finite proper $G$-$CW$-complex as source.
  \end{definition}

  Obviously the property $\FIN$-finite depends only on the $G$-homotopy type of $Y$,
  actually, only on the weak $G$-homotopy type.  Note that a group $G$ is $\FIN$-finite in
  the sense of Definition~\ref{def:FIN-finite_group} if and only if the $G$-space
  $\eub{G}$ is $\FIN$-finite in the sense of Definition~\ref{def:FIN-finite_G-space}.

  Recall that a finite proper $G$-$CW$-complex $X$ is called \emph{of determinant class},
  if the associated Hilbert $\caln(G)$-chain complex $L^2(G) \otimes_{\IZ G} C_*^c(X)$ is
  of determinant class in the sense of~\cite[Definition~3.20 on page~140]{Lueck(2002)}.
  Note that we are not demanding that $X$ is $L^2$-acyclic and the property being of
  determinant class depends only on the $G$-homotopy type of $X$. This follows
  from~\cite[Theorem~3.35~(1) on page~142]{Lueck(2002)}, since the mapping cone of a
  $\IQ G$-chain homotopy equivalence of finite projective $\IQ G$-chain complexes is a
  contractible finite projective $\IQ G$-chain complexes and hence of determinant class
  by~\cite[Lemma~2.18 on page~83 and Lemma~3.30 on page~140]{Lueck(2002)}. If $G$
  satisfies the Determinant Conjecture, then every finite $G$-$CW$-complex $X$ is of
  determinant class.

  \begin{definition}[Det-finite $G$-space]\label{def:det_finite_G-space}
    We call a $G$-space $Y$ \emph{$\det$-finite} if there exists a $G$-$CW$-approximation $(X,a)$
    with a finite proper $G$-$CW$-complex $X$ as source which is of determinant class.
  \end{definition}

  Consider a weak $\Phi$-homotopy equivalence $f \colon Y \to Y$ of a $\FIN$-finite
  $G$-space $Y$.  Choose a $G$-$CW$-approximation $(X,a)$ with a finite proper
  $G$-$CW$-com\-plex $X$ as source, and a $\Phi$-homotopy equivalence
  $\widehat{f} \colon X \to X$ such that $a \circ \widehat{f}$ and $f \circ a$ are
  $\Phi$-homotopic.  We assign to $\widehat{f} $ a finite proper
  $G \times_{\Phi} \IZ$-$CW$-complex $T_{\widehat{f};\Phi}$ by the $G \times_{\Phi} \IZ$-
  pushout
  \begin{equation}
    \xymatrix{(G \rtimes_{\Phi} \IZ) \times_G X \times \{0,1\} \ar[r]^-{q} \ar[d]_i
      &
      (G \rtimes_{\Phi} \IZ) \times_G X \ar[d]
      \\
      (G \rtimes_{\Phi} \IZ) \times_G  X \times [0,1]
      \ar[r]
      &
      T_{\widehat{f};\Phi}
    }
    \label{def_T_upper_Phi_widehat(f)}
  \end{equation}
  where $i$ is the obvious inclusion and $q$ sends $(gt^n, x, k)$ to $(gt^n, x)$ if $k =0$
  and to $(gt^{n-1}, \widehat{f}(x))$ if $k = 1$. Note that $T_{\widehat{f};\Phi}$ is a
  kind of to both sides infinite mapping telescope.  If $\Phi = \id_G$, the quotient space
  $T_{\widehat{f};\Phi}/\IZ$ is the mapping torus of $\widehat{f}$.  The quotient space
  $T_{\widehat{f};\Phi}/(G \rtimes_{\Phi}\IZ)$ is the mapping torus of $\widehat{f}/G$.
  
  \begin{lemma}\label{lem_T_upper_PhI_u_det_L2-acyclic}
    \begin{enumerate} 
    \item\label{lem_T_upper_PhI_u_det_L2-acyclic:L2_acyclic}
      The finite $G \rtimes_{\Phi} \IZ$-$CW$-complex $T_{\widehat{f};\Phi}$ is $L^2$-acyclic;
    \item\label{lem_T_upper_PhI_u_det_L2-acyclic:det_L2-acyclic}
      If $X$ is of determinant class, then $T_{\widehat{f};\Phi}$ is $\det$-$L^2$-acyclic;
    \item\label{lem_T_upper_PhI_u_det_L2-acyclic:peridoc_implies_det_L2-acyclic} Suppose
      that there is $n \in \IZ^{\ge 1}$ such that $\Phi^n = \id_G$ and $f^n$ is
      $G$-homotopic to the identity.  Then $T_{\widehat{f};\Phi}$ is $\det$-$L^2$-acyclic.
    \end{enumerate}
  \end{lemma}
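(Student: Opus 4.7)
The plan is to reduce all three parts to an algebraic mapping-cone computation in the spirit of L\"uck's formula for the $L^2$-torsion of a mapping torus. Writing $F \colon \Gamma\times_G X \to \Gamma\times_G X$ for the $\Gamma$-equivariant map $(\gamma,x)\mapsto (\gamma t^{-1},\widehat f(x))$ (well-defined because $\widehat f$ is a $\Phi$-map) and $D_* := L^2(\Gamma)\otimes_{\IZ G} C_*^c(X)$, a Mayer--Vietoris analysis of the pushout~\eqref{def_T_upper_Phi_widehat(f)} should show, after collapsing the contractible interval factor in the cylinder, that the Hilbert $\caln(\Gamma)$-chain complex $L^2(\Gamma)\otimes_{\IZ\Gamma}C_*^c(T_{\widehat f;\Phi})$ is chain equivalent to the algebraic mapping cone of $\id - F_* \colon D_* \to D_*$. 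All three assertions then become statements about this single mapping cone.

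For parts~\ref{lem_T_upper_PhI_u_det_L2-acyclic:L2_acyclic} and~\ref{lem_T_upper_PhI_u_det_L2-acyclic:det_L2-acyclic} I would appeal to the standard mapping-torus analysis. The long exact sequence of the mapping cone identifies $L^2$-acyclicity with $\id - F_*$ being a weak $\caln(\Gamma)$-isomorphism, which is automatic: $\id - tS$ on $L^2(\Gamma)\otimes_{\caln(G)}V$ is a weak isomorphism for any bounded $\caln(G)$-linear endomorphism $S$ of a finitely generated Hilbert $\caln(G)$-module $V$, compare~\cite[Theorem~1.39 and Theorem~7.27]{Lueck(2002)}. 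Under the extra determinant-class hypothesis of~\ref{lem_T_upper_PhI_u_det_L2-acyclic:det_L2-acyclic}, induction along $\IZ G\hookrightarrow \IZ\Gamma$ promotes $D_*$ to a Hilbert $\caln(\Gamma)$-chain complex of determinant class, and the Fuglede--Kadison determinant of $\id - F_*$ in each degree is finite and positive by the analytic input behind L\"uck's mapping-torus theorem; multiplicativity of Fuglede--Kadison determinants along the short exact sequence associated to the mapping cone then yields that the cone is of determinant class.

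For part~\ref{lem_T_upper_PhI_u_det_L2-acyclic:peridoc_implies_det_L2-acyclic} the strategy is to pass to a finite-index subgroup where the situation becomes a genuine product. Since $\Phi^n = \id_G$, the element $t^n$ commutes with $G$, and $\Gamma' := G\cdot \langle t^n\rangle \cong G\times \IZ$ is a subgroup of $\Gamma$ of index $n$. By Theorem~\ref{the:Basic_properties_of_L_upper_2-torsion}\ref{the:Basic_properties_of_L_upper_2-torsion:passage_to_subgroups_of_finite_index} it suffices to prove that $T_{\widehat f;\Phi}|_{\Gamma'}$ is $\det$-$L^2$-acyclic. Regrouping $n$ consecutive mapping cylinders of $\widehat f$ inside the two-sided telescope should exhibit $T_{\widehat f;\Phi}|_{\Gamma'}$ as the $\Gamma'$-mapping torus of $\widehat f^n\colon X\to X$ (a $G$-map, since $\Phi^n = \id_G$), and the further hypothesis $\widehat f^n\simeq_G \id_X$ then yields a $\Gamma'$-homotopy equivalence
\[
T_{\widehat f;\Phi}\big|_{\Gamma'}\;\simeq_{\Gamma'}\; X\times \widetilde{S^1},
\]
with $\Gamma' = G\times \IZ$ acting diagonally and $\widetilde{S^1} = \IR$ carrying the $\IZ$-translation action. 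The $\IZ$-$CW$-complex $\widetilde{S^1}$ is $\det$-$L^2$-acyclic because its only nontrivial differential is $1-t$ and $\det^{(2)}_{\caln(\IZ)}(1-t) = 1$, so the product formula in Theorem~\ref{the:Basic_properties_of_L_upper_2-torsion}\ref{the:Basic_properties_of_L_upper_2-torsion:Product_formula}, applied with first factor $\widetilde{S^1}$ and second factor $X$, implies that $X\times \widetilde{S^1}$ is $\det$-$L^2$-acyclic without any hypothesis on $X$.

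The main obstacle I foresee is the construction of the $\Gamma'$-homotopy equivalence $T_{\widehat f;\Phi}|_{\Gamma'} \simeq_{\Gamma'} X\times \widetilde{S^1}$: one has to turn a cellular $G$-homotopy $\widehat f^n \simeq_G \id_X$ into a $\Gamma'$-equivariant cellular map between proper finite $\Gamma'$-$CW$-complexes and verify that it induces weak equivalences on every fixed-point set. Once this is in place, the remaining ingredients are either routine Mayer--Vietoris or direct applications of the restriction, product, and mapping-torus formulas recalled in Theorem~\ref{the:Basic_properties_of_L_upper_2-torsion} and~\cite{Lueck(2002)}.
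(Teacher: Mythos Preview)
For parts~\ref{lem_T_upper_PhI_u_det_L2-acyclic:L2_acyclic} and~\ref{lem_T_upper_PhI_u_det_L2-acyclic:peridoc_implies_det_L2-acyclic} your outline is essentially the paper's argument. The vanishing in~\ref{lem_T_upper_PhI_u_det_L2-acyclic:L2_acyclic} is the covering trick---restrict to the index-$d$ subgroup $G\rtimes_{\Phi^d}\IZ$, bound $b_m^{(2)}$ by $2C/d$ via a cell count, and let $d\to\infty$---which is precisely what underlies the reference~\cite[Theorem~1.39]{Lueck(2002)} you invoke; the paper just runs it directly rather than repackaging it as ``$\id - tS$ is a weak isomorphism''. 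For~\ref{lem_T_upper_PhI_u_det_L2-acyclic:peridoc_implies_det_L2-acyclic} the paper does exactly what you propose: restrict to $G\times\IZ$, identify the telescope with $X\times\IR$ up to $G\times\IZ$-homotopy, and reduce to $\IR$ being $\det$-$L^2$-acyclic over $\IZ$ (the paper argues cell by cell rather than citing the product formula, but the content is the same).

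There is, however, a gap in your argument for~\ref{lem_T_upper_PhI_u_det_L2-acyclic:det_L2-acyclic}. You assert that the Fuglede--Kadison determinant of $\id - F_*$ is positive in each degree ``by the analytic input behind L\"uck's mapping-torus theorem'', but no such input is available for a general $G$: the relevant computations in~\cite{Lueck(2002)} either assume the Determinant Conjecture for the ambient group or work in settings (e.g.\ $G$ trivial, so $\Gamma=\IZ$) where it is known by direct calculation. Fortunately this step is unnecessary. Determinant class is inherited along short exact sequences of finite Hilbert chain complexes by~\cite[Theorem~3.35(1)]{Lueck(2002)}, so once $D_* = L^2(\Gamma)\otimes_{\IZ G}C_*^c(X)$ is of determinant class---which follows from $X$ being so via induction along $G\hookrightarrow\Gamma$, see~\cite[Theorem~3.14(6)]{Lueck(2002)}---the cone of \emph{any} chain map $D_*\to D_*$ is automatically of determinant class via the sequence $0\to D_*\to\cone\to\Sigma D_*\to 0$, with no control on the map required. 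This is exactly the paper's argument, phrased there in terms of the defining pushout~\eqref{def_T_upper_Phi_widehat(f)} rather than the algebraic cone.
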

  \begin{proof}~\ref{lem_T_upper_PhI_u_det_L2-acyclic:L2_acyclic}
    Consider $d \in \IZ^{\ge 1}$.  Let $\pr \colon G \times_{\Phi} \IZ \to \IZ$ be the
    projection.  Then $\pr^{-1}(d \cdot \IZ)$ has index $d$ in $G \times_{\Phi} \IZ$ and
    can be identified with $G \times_{\Phi^d} \IZ$.  The restriction of
    $T_{\widehat{f};\Phi}$ to $\pr^{-1}(d \cdot \IZ) = G \times_{\Phi^d} \IZ$ is
    $G \times_{\Phi^d} \IZ$-homotopy equivalent to $T_{\widehat{f}^d;\Phi^d}$.  We get
    from Theorem~\ref{the:Basic_properties_of_L_upper_2-torsion}~%
\ref{the:Basic_properties_of_L_upper_2-torsion:G-homotopy_invariance} 
    and~\ref{the:Basic_properties_of_L_upper_2-torsion:passage_to_subgroups_of_finite_index}
    \[
      b_m^{(2)}(T_{\widehat{f};\Phi};\caln(G \rtimes_{\Phi} \IZ))
      =
     \frac{1}{[d]}\cdot b_m^{(2)}(T_{\widehat{f}^d;\Phi^d};\caln(G \rtimes_{\Phi^d} \IZ)).
   \]
   If $C$ is the number of cells of $X/G$, then the number of cells of $T_{\widehat{f}^d;\Phi^d}/(G \rtimes_{\Phi^d} \IZ)$
   is bounded by $2C$. This implies
   \begin{multline*}
     b_m^{(2)}(T_{\widehat{f}^d;\Phi^d};\caln(G \rtimes_{\Phi^d} \IZ))
   \\
   \le
   \dim_{\caln(G \rtimes_{\Phi^d} \IZ)} (L^2(G \rtimes_{\Phi^d} \IZ) \otimes_{\IZ[G \rtimes_{\Phi^d} \IZ]}
   C_m(T_{\widehat{f}^d;\Phi^d}))
   \le 2C.
 \end{multline*}
 Hence we get for every $d \in \IZ^{\ge 1}$
 \[
   b_m^{(2)}(T_{\widehat{f};\Phi};\caln(G \rtimes_{\Phi} \IZ)) \le \frac{2C}{d}.
 \]
 Therefore $b_m^{(2)}(T_{\widehat{f};\Phi};\caln(G \rtimes_{\Phi} \IZ))$ vanishes for
 every $m \in \IZ^{\ge 0}$.  \\[1mm]~\ref{lem_T_upper_PhI_u_det_L2-acyclic:det_L2-acyclic}
 The $G \times_{\Phi} \IZ$-$CW$-complex $(G \rtimes_{\Phi} \IZ )\times_G X$ is of
 determinant class, since the $G$-$CW$-complex $X$ is of determinant class,
 see~\cite[Theorem~3.14~(6) on page~129]{Lueck(2002)}.  Hence $T_{\widehat{f};\Phi}$ is of
 determinant class by~\cite[Theorem~3.35~(1) on page~142]{Lueck(2002)}.
 \\[1mm]\ref{lem_T_upper_PhI_u_det_L2-acyclic:peridoc_implies_det_L2-acyclic} By
 inspecting the proof of assertion~\ref{lem_T_upper_PhI_u_det_L2-acyclic:L2_acyclic}, we
 can show the following: There is an inclusion of groups
 $G \times \IZ \to G \rtimes_{\Phi} \IZ$ of finite index $n$ such that the restriction
 $i^*T_{\widehat{f};\Phi}$ of the proper finite $G \rtimes _{\Phi} \IZ$-$CW$-complex
 $T_{\widehat{f};\Phi}$ to $G \times \IZ$ with $i$ is $G \times \IZ$-homotopy equivalent
 to $X \times \IR$ with the obvious $G \times \IZ$-action coming from the given $G$-action
 on $X$ and the $\IZ$-action on $\IR$ given by translation.  We conclude
 from~\cite[Theorem~3.14~(5) on page~128]{Lueck(2002)} that it suffices to show that the
 proper finite $G$-$CW$-complex $i^*T_{\widehat{f};\Phi}$ is of determinant class. We have
 explained already above that the property being of determinant class depends only on the
 homotopy type of a finite Hilbert $\caln(G \times \IZ)$-chain complex.  Therefore it
 suffices to show that the proper finite $G\times \IZ$-$CW$-complex $X \times \IR$ is of
 determinant class. Using~\cite[Theorem~3.35~(1) on page~142]{Lueck(2002)} one reduces the
 claim to the case $X = G/H$. Since the proper finite $G$-$CW$-complex $G/H \times \IR$ is
 $G$-homeomorphic to $(G \times \IZ)\times_{H \times \IZ} \IR$ with respect to the
 $H \times \IZ$-action on $\IR$ given by the projection $H \times \IZ \to \IZ$, it
 suffices to show by~\cite[Theorem~3.14~(6) on page~129]{Lueck(2002)} that the proper
 finite $H \times \IZ$-complex $\IR$ is of determinant class.  Because
 of~\cite[Theorem~3.14~(5) on page~128]{Lueck(2002)} it suffices to show that the proper
 finite $\IZ$-$CW$-complex $\IR$ is of determinant class. This follows by a direct
 inspection or the fact that the Determinant Conjecture holds for the group $\IZ$.
\end{proof}

\begin{definition}[Det-finite $\Phi$-self-homotopy equivalence]\label{def:det-finite_Phi-self-homotopy_equivalence}
  We call a weak $\Phi$-self-homotopy equivalence $f \colon Y \to Y$ of a $\FIN$-finite
  $G$-space $Y$ $\det$-finite if the following holds.
  For every $G$-$CW$-approximation $(X,a)$ with a finite proper
  $G$-$CW$-com\-plex $X$ as source and  every $\Phi$-homotopy equivalence
  $\widehat{f} \colon X \to X$ such that $a \circ \widehat{f}$ and $f \circ a$ are
  $\Phi$-homotopic, the finite
  $G \rtimes_{\Phi} \IZ$-$CW$-complex $T_{\widehat{f};\Phi}$ is of determinant class.
\end{definition}

Note  that for a $\det$-finite $\Phi$-automorphism the 
finite $G \rtimes_{\Phi} \IZ$-$CW$-complex $T_{\widehat{f};\Phi}$ is $\det$-$L^2$-acyclic by
Lemma~\ref{lem_T_upper_PhI_u_det_L2-acyclic}~\ref{lem_T_upper_PhI_u_det_L2-acyclic:L2_acyclic}.

\begin{definition}[$L^2$-torsion of a selfhomotopy equivalence]%
\label{def:L2-torsion_of_a_selfhomotopy_equivalence}
Consider a weak $\Phi$-self homotopy equivalence $f \colon Y \to Y$ of the $\FIN$-finite
$G$-space $Y$ which is $\det$-finite.  Choose a $G$-$CW$-approximation $(X,a)$ with a
finite proper $G$-$CW$-com\-plex $X$ as source, and a $\Phi$-homotopy equivalence
$\widehat{f} \colon X \to X$ such that $a \circ \widehat{f}$ and $f \circ a$ are
$\Phi$-homotopic.

  Then  $T_{\widehat{f};\Phi}$ is $\det$-$L^2$-acyclic and we define the $L^2$-torsion of $(f;\Phi)$ to be 
  \[
  \rho^{(2)}(f;\Phi) := \rho^{(2)}(T_{\widehat{f};\Phi};\caln(G \rtimes_{\Phi} \IZ)) \in \IR.
  \]
\end{definition}

In the remainder of this section will  show that the notion of $\rho^{(2)}(f;\Phi)$
appearing in Definition~\ref{def:L2-torsion_of_a_selfhomotopy_equivalence} is well-defined
and collect its main properties, namely we will prove the following Lemma~\ref{lem:checking_Det-finite_for_Phi-automorphism}
and the following Theorem~\ref{the:properties_of_L2-torsion_of_self_homo}.

\begin{lemma}\label{lem:checking_Det-finite_for_Phi-automorphism}
  Consider a $\Phi$-self-homotopy equivalence $f \colon Y \to Y$ of a $\FIN$-finite
  $G$-space $Y$.

  \begin{enumerate}

  \item\label{lem:checking_Det-finite_for_Phi-automorphism:check}
 Then $(f;\Phi)$ is $\det$-finite if and only there is a
  $G$-$CW$-approximation $(X,a)$ with a finite proper
  $G$-$CW$-com\-plex $X$ as source and  a  $\Phi$-homotopy equivalence
  $\widehat{f} \colon X \to X$ such that $a \circ \widehat{f}$ and $f \circ a$ are
  $\Phi$-homotopic and the finite
  $G \rtimes_{\Phi} \IZ$-$CW$-complex $T_{\widehat{f};\Phi}$ is of determinant class;

\item\label{lem:checking_Det-finite_for_Phi-automorphism:torsion}
  The number $\rho^{(2)}(f;\Phi)$ appearing in Definition~\ref{def:L2-torsion_of_a_selfhomotopy_equivalence}
  is independent of the choices of $(X,a)$ and $\widehat{f}$.
\end{enumerate}

\end{lemma}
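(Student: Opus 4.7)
The plan is to prove both parts by establishing that different choices of the pair $((X,a), \widehat{f})$ produce telescopes related by a simple $G \rtimes_\Phi \IZ$-homotopy equivalence, so that being of determinant class is transferred and the value of $\rho^{(2)}$ is preserved.

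To set up the comparison, suppose $(X,a)$ and $(X',a')$ are two $G$-$CW$-approximations of $Y$ with finite proper $G$-$CW$-complex sources, and let $\widehat{f} \colon X \to X$ and $\widehat{f}' \colon X' \to X'$ be lifts such that $a \circ \widehat{f}$ is $\Phi$-homotopic to $f \circ a$ and $a' \circ \widehat{f}'$ is $\Phi$-homotopic to $f \circ a'$. The Equivariant Approximation and Whitehead Theorems recalled at the start of Section~\ref{sec:L2-torsion_of_a_selfhomotopy_equivalence} supply a cellular $G$-homotopy equivalence $s \colon X \to X'$ with $a' \circ s$ $G$-homotopic to $a$, uniquely determined up to $G$-homotopy by this property. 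Concatenating the two given $\Phi$-homotopies one finds that $a' \circ (s \circ \widehat{f})$ and $a' \circ (\widehat{f}' \circ s)$ are $\Phi$-homotopic, and the uniqueness clause applied to the $\Phi$-maps $s \circ \widehat{f}$ and $\widehat{f}' \circ s$ then yields a cellular $\Phi$-homotopy between them.

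Next I would construct the comparison map on telescopes. Applying the induction functor $(G \rtimes_\Phi \IZ) \times_G -$ to $s$ on the two ``corners'' of the pushout~\eqref{def_T_upper_Phi_widehat(f)}, and using the cellular $\Phi$-homotopy between $s \circ \widehat{f}$ and $\widehat{f}' \circ s$ to extend the map across the cylinder piece, one obtains a cellular $G \rtimes_\Phi \IZ$-map $S \colon T_{\widehat{f};\Phi} \to T_{\widehat{f}';\Phi}$. Since the inductions of $s$ are $G \rtimes_\Phi \IZ$-homotopy equivalences of finite proper $G \rtimes_\Phi \IZ$-$CW$-complexes, glued along $G \rtimes_\Phi \IZ$-cofibrations, the equivariant gluing lemma shows that $S$ is a $G \rtimes_\Phi \IZ$-homotopy equivalence. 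Part~\ref{lem:checking_Det-finite_for_Phi-automorphism:check} then follows from the fact, recalled before Definition~\ref{def:det_finite_G-space}, that the property of being of determinant class is a $G$-homotopy invariant of finite proper $G$-$CW$-complexes.

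The main obstacle lies in part~\ref{lem:checking_Det-finite_for_Phi-automorphism:torsion}, where one needs $S$ to be a \emph{simple} $G \rtimes_\Phi \IZ$-homotopy equivalence in order to apply Theorem~\ref{the:Basic_properties_of_L_upper_2-torsion}\ref{the:Basic_properties_of_L_upper_2-torsion:G-homotopy_invariance} without imposing condition~(DFJ) on $G \rtimes_\Phi \IZ$. The plan is to compute the equivariant Whitehead torsion $\tau^{G \rtimes_\Phi \IZ}(S)$ via the gluing formula for Whitehead torsion applied to the pushout~\eqref{def_T_upper_Phi_widehat(f)}: each of the three corners contributes the image of $\tau^G(s)$ under the appropriate induction map to $\bigoplus_{(K)} \Wh(W_{G \rtimes_\Phi \IZ} K)$, and the two contributions coming from the two endpoints of the cylinder enter with opposite signs, cancelling the contribution from the base and yielding $\tau^{G \rtimes_\Phi \IZ}(S) = 0$. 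An analogous but easier bookkeeping handles the independence of the choice of $\widehat{f}$ when $X = X'$ and one varies the $\Phi$-homotopy representing $f$, which together with the comparison above completes the proof of~\ref{lem:checking_Det-finite_for_Phi-automorphism:torsion}.
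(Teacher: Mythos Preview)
Your overall strategy matches the paper's: show that the two telescopes are related by a \emph{simple} $G_\Phi$-homotopy equivalence, then invoke Theorem~\ref{the:Basic_properties_of_L_upper_2-torsion}\ref{the:Basic_properties_of_L_upper_2-torsion:G-homotopy_invariance}. Part~\ref{lem:checking_Det-finite_for_Phi-automorphism:check} is fine as you wrote it.

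The gap is in your torsion computation for part~\ref{lem:checking_Det-finite_for_Phi-automorphism:torsion}. The sum formula for equivariant Whitehead torsion (e.g.\ \cite[Theorem~4.8]{Lueck(1989)}) applies to a \emph{strict} map of pushout diagrams. Your map $S$ is not of that form: with the attaching maps $q$ and $q'$ from~\eqref{def_T_upper_Phi_widehat(f)}, the square $q' \circ (s\times\id_{\{0,1\}})$ versus $s\circ q$ only commutes on the $k=0$ endpoint; on the $k=1$ endpoint it misses by the $\Phi$-homotopy between $\widehat{f}'\circ s$ and $s\circ\widehat{f}$. Consequently the map you build on the cylinder piece is \emph{not} $\id_{G_\Phi}\times_G(s\times\id_{[0,1]})$, and you cannot read off its contribution as ``the induction of $\tau^G(s)$''. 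Your accounting (``three corners each contribute $\tau(s)$, two endpoints enter with opposite signs and cancel the base'') is therefore not justified as stated.

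The paper resolves exactly this issue by replacing your single $S$ with a zigzag $T_{\widehat{f}_0;\Phi}\xrightarrow{u_1} Z_2\xrightarrow{u_2} Z_3\xleftarrow{u_3} Z_4\xrightarrow{u_4} T_{\widehat{f}_1;\Phi}$ through intermediate pushouts. The point of the zigzag is that each $u_i$ \emph{is} induced by a strictly commuting map of pushout diagrams (the homotopy $h$ is absorbed into the middle row rather than into the gluing), so the sum formula applies termwise: one gets $\tau(u_1)=\tau(s)$, $\tau(u_2)=\tau(u_3)=0$, and $\tau(u_4)=\tau(s\times\id_{[0,1]})-\tau(s\times\id_{\{0,1\}})=\tau(s)-2\tau(s)$, giving total torsion zero. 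Your argument would be complete if you carried out this decomposition, or otherwise explained why the homotopy used to build $S$ contributes nothing to $\tau^{G_\Phi}(S)$.
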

\begin{proof}
  Suppose that $(X_l,a_l)$ for $l = 0,1$ is a $G$-$CW$-approximation of $Y$ with a finite
  proper $G$-$CW$-complex $X_l$  as source and we have a
  $\Phi$-homotopy equivalence $\widehat{f}_l \colon X_l \to X_l$ such that
  $a_l \circ \widehat{f}_l$ and $f \circ a_l$ are $\Phi$-homotopic.  Then we can choose a cellular
  $G$-homotopy equivalence $s'\colon X_0 \to X_1$ such that $a_1 \circ s'$ and $a_0$ are cellularly
  $G$-homotopic. We obtain a diagram of finite $G$-$CW$-complexes
  \[
    \xymatrix{X_0 \ar[r]^{\widehat{f}_0} \ar[d]_{s'}
      &
      X_0 \ar[d]^{s'}
        \\
        X_1 \ar[r]_{\widehat{f}_1}
        &
          X_1
        }
      \]
      where all arrows are cellular and which commutes up to cellular $\Phi$-homotopy.
      In the sequel we often abbreviate
      \[
      G_{\Phi} = G \times_{\Phi} \IZ.
      \]
      Let $s \colon G_{\Phi} \times_G X_0 \to G_{\Phi} \times_G X_1$ be the cellular $G_{\Phi}$-homotopy equivalence
      $s = \id_{G_{\Phi}} \times_G s'$. Let $s^{-1}  \colon G_{\Phi} \times_G X_1 \to G_{\Phi} \times_G X_0$ be
      some cellular $G_{\Phi}$-homotopy inverse of $s$.  Define 
      \[
       q_k \colon G_{\Phi} \times_G X_k \times \{0,1\} \to G_{\Phi} \times_G X_k
       \]
      by sending  $(gt^n,x,k)$  to $(gt^n,x)$, if $k = 0$, and to $(gt^{n-1},\widehat{f}_k(x))$, if $k = 1$.
      Choose a cellular $G_{\Phi}$-homotopy
      $h \colon G_{\Phi} \times_G X_0 \times \{0,1\} \times [0,1] \to G_{\Phi} \times_G X_1$
      satisfying $h_0 = s \circ q_0$ and  $h_1 =  q_1 \circ (s \times \id_{\{0,1\}})$.
      
    Consider the following commutative diagram of finite $G_{\Phi}$-$CW$-complexes
    \[\xymatrix@!C=11em{G_{\Phi} \times_G  X_0 \times [0,1] \ar[d]^{\id}
        &
       G_{\Phi} \times_G X_0 \times \{0,1\} \ar[r]^-{q_0}  \ar[l] \ar[d]^{\id}
      &
      G_{\Phi} \times_G X_0 \ar[d]^s
      \\
      G_{\Phi} \times_G  X_0 \times [0,1]  \ar[d]^{l_0}
        &
       G_{\Phi} \times_G X_0 \times \{0,1\} \ar[r]^-{s \circ q_0}  \ar[l] \ar[d]^{k_0}
      &
      G_{\Phi} \times_G X_1 \ar[d]^{\id}
      \\
      G_{\Phi} \times_G  X_0 \times [0,1] \times [0,1]
      &
       G_{\Phi} \times_G X_0 \times \{0,1\} \times [0,1]  \ar[r]^-{h}  \ar[l]
      &
      G_{\Phi} \times_G X_1
      \\
      G_{\Phi} \times_G  X_0 \times [0,1] \ar[u]_{l_1} \ar[d]^{s \times \id_{[0,1]}}
        &
       G_{\Phi} \times_G X_0 \times \{0,1\} \ar[r]^-{q_1 \circ (s \times \id_{\{0,1\}})}  \ar[l] \ar[u]_{k_1} \ar[d]^{s \times \id_{\{0,1\}}}
      &
      G_{\Phi} \times_G X_1  \ar[u]_{\id} \ar[d]^{\id}
      \\
       G_{\Phi} \times_G  X_1 \times [0,1] 
        &
       G_{\Phi} \times_G X_1 \times \{0,1\} \ar[r]^-{q_1}  \ar[l] 
      &
      G_{\Phi} \times_G X_1  
          }
    \]
    where the maps $k_m$ and $l_m$ are induced by the inclusion $\pt \to [0,1]$ with image $\{m\}$ for $m = 0,1$.
    
    Recall that $T_{\widehat{f}_0;\Phi}$ is the $G_{\Phi}$-pushout of the uppermost row
      and $T_{\widehat{f}_1;\Phi}$ is the $G_{\Phi}$-pushout of the lowermost row. Let
      $Z_m$ be the $G$-pushout of the $m$-th  row  for $m = 2,3,4$.
      Then the diagram above induced a zigzag of $G_{\Phi}$-homotopy equivalences
        of finite proper $G_{\Phi}$-$CW$-complexes
        \[
          T_{\widehat{f}_0;\Phi} \xrightarrow{u_1} Z_2  \xrightarrow{u_2} Z_3  \xleftarrow{u_3} Z_4
          \xrightarrow{u_4} T_{\widehat{f}_1;\Phi}.
      \]
      Let $u_3^{-1}$ be  a $G_{\Phi}$-homotopy inverse of $u_3$. Define the $G_{\Phi}$-homotopy equivalence
        \[
          u =  u_4 \circ u_3^{-1} \circ u_2 \circ u_1 \colon T_{\widehat{f}_0;\Phi}  \to T_{\widehat{f}_1;\Phi}.
          \]
          The equivariant Whitehead  torsion $\tau^{G_{\Phi}}(u)$ vanishes by the following  computation based
          on~\cite[Theorem~4.8 on page~62]{Lueck(1989)}
          \begin{eqnarray*}
            \tau^{G_{\Phi}}(u)
            & = &
            \tau^{G_{\Phi}}(u_1) + \tau^{G_{\Phi}}(u_2) + \tau^{G_{\Phi}}(u_3^{-1})  + \tau^{G_{\Phi}}(u_4)
            \\
            & = &
            \tau^{G_{\Phi}}(u_1) + \tau^{G_{\Phi}}(u_2) - \tau^{G_{\Phi}}(u_3)  + \tau^{G_{\Phi}}(u_4)
            \\
            & = &
             \tau^{G_{\Phi}}(s) + 0  - 0 +  (\tau^{G_{\Phi}}(s \times \id_{[0,1]}) - \tau^{G_{\Phi}}(s \times \id_{\{0,1\}}))
            \\
            & = &
            \tau^{G_{\Phi}}(s) + 0  - 0 + (\tau^{G_{\Phi}}(s) -2 \cdot  \tau^{G_{\Phi}}(s))
            \\
            & = &
             0.
          \end{eqnarray*}      
          Hence  $T_{\widehat{f}_0;\Phi}$ and $T_{\widehat{f}_1;\Phi}$ are simple $G_{\Phi}$-homotopy equivalent. Now
           Lemma~\ref{lem:checking_Det-finite_for_Phi-automorphism} follows from
        Theorem~\ref{the:Basic_properties_of_L_upper_2-torsion}~%
\ref{the:Basic_properties_of_L_upper_2-torsion:G-homotopy_invariance}.
      \end{proof}
      
\begin{theorem}[Main properties of the $L^2$-torsion of a self homotopy equivalence]%
\label{the:properties_of_L2-torsion_of_self_homo}
    Let $\Phi \colon G \xrightarrow{\cong} G$ be a group automorphism.

    \begin{enumerate} 
    \item\label{the:properties_of_L2-torsion_of_self_homo:homotopy_invariance}
      \emph{Equivariant homotopy invariance}\\[1mm]
      Let $f  \colon Y \to Y$ and $f'  \colon Y' \to Y'$ be  weak $\Phi$-homotopy equivalences
      of $\FIN$-finite $G$-spaces $Y$ and $Y'$ and $u \colon Y \to Y'$ be a weak $G$-homotopy
      equivalence such that $u \circ f$ and $f' \circ u$ are $\Phi$-homotopy equivalent.
      Suppose  that $(f;\Phi)$ or $(f';\Phi)$ is $\det$-finite.
      
      Then both $(f;\Phi)$ and  $(f';\Phi)$ are $\det$-finite and we get
      \[
      \rho^{(2)}(f;\Phi)  = \rho^{(2)}(f';\Phi).
    \]
    In particular the property $\det$-finite and the number $\rho^{(2)}(f;\Phi)$ depends
    only on the $\Phi$-homotopy class of $f$;
     
     \item\label{the:properties_of_L2-torsion_of_self_homo:trace_formula} \emph{Trace formula}\\[1mm]
       Let $\Psi  \colon G \xrightarrow{\cong} G'$ and $\Psi' \colon G' \to G$ be group isomorphisms.
       Consider a $\FIN$-finite $G$-space $Y$ and  a $\FIN$-finite $G'$-space $Y'$.
       
       Let $f \colon Y \to Y'$ be a weak $\Psi$-homotopy equivalence and
       $f' \colon Y' \to Y$ be a weak $\Psi'$-homotopy equivalence. Suppose that
       $(f' \circ f;\Psi' \circ \Psi)$ or $(f \circ f';\Psi \circ \Psi')$ is
       $\det$-finite.

       Then both $(f' \circ f;\Psi' \circ \Psi)$ and  $(f \circ f';\Psi \circ \Psi')$ are $\det$-finite and we get
       \[
        \rho^{(2)}(f' \circ f;\Psi' \circ \Psi) = \rho^{(2)}(f \circ f';\Psi \circ \Psi');
       \]

     \item\label{the:properties_of_L2-torsion_of_self_homo:trace_formula:multiplicativity}
       \emph{Multiplicativity}\\[1mm]
        Let $f  \colon Y \to Y$  be weak $\Phi$-homotopy equivalence
        for the $\FIN$-finite $G$-space $Y$. Consider  $n \in \IZ^{\ge 1}$.
        Suppose that $(f^n;\Phi^n)$ or $(f;\Phi)$ is $\det$-finite.

        Then  both $(f^n;\Phi^n)$ and  $(f;\Phi)$ are $\det$-finite and we get
       \[
        \rho^{(2)}(f^n;\Phi^n) = n \cdot \rho^{(2)}(f;\Phi);
      \]

    \item\label{the:properties_of_L2-torsion_of_self_homo:restriction}\emph{Restriction}\\[1mm]
      Let $f \colon Y \to Y$ be weak $\Phi$-homotopy equivalence for the $\FIN$-finite
      $G$-space $Y$. Let $H \subseteq G$ be a subgroup of $G$ of finite index $[G:H]$
      satisfying $\Phi(H) = H$.

      Then the restriction $Y|_H$ of $Y$ to an $H$-space is $\FIN$-finite.
      If we additionally assume that $(f|_H; \Phi|_H)$ or $(f;\Phi)$ is $\det$-finite, then both
      $(f|_H; \Phi|_H)$ and $(f;\Phi)$ are $\det$-finite and we get
       \[
         \rho^{(2)}(f|_H; \Phi|_H) = [G:H] \cdot \rho^{(2)}(f;\Phi);
       \]

       \item\label{the:properties_of_L2-torsion_of_self_homo:induction}\emph{Induction}\\[1mm]
         Let $H \subseteq G$ be a subgroup satisfying $\Phi(H) = H$. Let $f  \colon Y \to Y$  be a
         weak $\Phi|_H \colon H \to H$-homotopy equivalence
         for the $\FIN$-finite $H $-space $Y$.

         Then the  $G \times_HY$ is a $\FIN$-finite $G$-space and we get a weak $\Phi$-homotopy
         equivalence $F\colon G \times_H Y \to G \times _HY$ by sending $(g,x)$ to $(\Phi(g),f(x))$.
         If we  additionally assume  that $(f;\Phi|_H)$  or $(F;\Phi)$ is $\det$-finite, then both 
         $(f;\Phi|_H)$   and $(F;\Phi)$ are $\det$-finite  and we have
         \[
         \rho^{(2)}(F;\Phi) = \rho^{(2)}(f;\Phi|_H);
       \]

     \item\label{the:properties_of_L2-torsion_of_self_homo:finite_quotient} \emph{Finite quotients}\\[1mm]
       Let $1 \to K \to G \xrightarrow{p} Q \to 1$ be an extension of groups with finite
       $K$.  Let $\widetilde{\Phi} \colon G \xrightarrow{\cong} G$ and
       $\Phi \colon Q \xrightarrow{\cong} Q$ be group automorphisms satisfying
       $p \circ \widetilde{\Phi} = \Phi \circ p$. Let $f \colon Y \to Y$ be a weak
       $\Phi$-homotopy equivalence for the $\FIN$-finite $Q$-space $Y$. Let $p^*X$ be the
       $G$-space obtained from $X$ by restriction with $p$. Then $p^*Y$ is $\FIN$-finite
       and $p^*f \colon p^*Y \to p^*Y$ is a weak $\widetilde{\Phi}$-homotopy equivalence.

       If $(f,\Phi)$ or $(p^*f, \widetilde{\Phi})$ is $\det$-finite, then both $(f,\Phi)$
       and $(p^*f, \widetilde{\Phi})$ are $\det$-finite and we get

    \[
     \rho^{(2)}(p^*f,\widetilde{\Phi} ) = \frac{\rho^{(2)}(f;\Phi)}{|K|};
   \]

     \item\label{the:properties_of_L2-torsion_of_self_homo:composite_with_l_g}
       \emph{Inner automorphisms}\\[1mm]
       Let $f  \colon Y \to Y$  be weak $\Phi$-homotopy equivalence
       for the $\FIN$-finite $G$-space $Y$. Consider $g \in G$. Let
       $l_g \colon Y \to Y$ be the map given by multiplication with $g$ and
       $c_g \colon G \to G$ be the inner automorphism sending to $g'$ to $gg'g^{-1}$.
       Suppose  that $(l_g \circ f;c_g \circ \Phi)$ or $(f;\Phi)$ is $\det$-finite.

       Then both $(l_g \circ f;c_g \circ \Phi)$ and  $(f;\Phi)$ are $\det$-finite and we get 
       \[
         \rho^{(2)}(l_g \circ f;c_g \circ \Phi) = \rho^{(2)}(f;\Phi);
        \]

        \item\label{the:properties_of_L2-torsion_of_self_homo:Sum_formula} 
          \emph{Sum formula}\\[1mm]
          Consider the $G$-pushout
          \[
            \xymatrix{Y_0 \ar[r]^{j_2} \ar[d]_{j_1}
              & Y_2 \ar[d]
              \\
              Y_1 \ar[r]
              &
              Y
            }
          \]
          where $j_1$ is a $G$-cofibration,
          and the commutative diagram
          \[\xymatrix{Y_1 \ar[d]^{f_1}
              &
              Y_0 \ar[d]^{f_0}\ar[r]^{j_2} \ar[l]_{j_1}
              &
              Y_2 \ar[d]^{f_2}
              \\
              Y_1 
              &
              Y_0  \ar[r]^{j_2} \ar[l]_{j_1}
              &
              Y_2.
            }
          \]
          where the vertical arrows are weak $\Phi$-homotopy equivalences.  Let
          $f \colon Y \to Y$ be the map given by $f_0$, $f_1$, and $f_2$ and the
          $G$-pushout property.  Suppose that $Y_0$, $Y_1$, and $Y_2$ are $\FIN$-finite
          and that $(f_0;\Phi)$, $(f_1;\Phi)$, and $(f_2;\Phi)$ are $\det$-finite.

          Then $Y$ is $\FIN$-finite, $f \colon Y \to Y$ is a weak $\Phi$-homotopy equivalence, $(f;\Phi)$ is  $\det$-finite,
          and we get
          \[
            \rho^{(2)}(f;\Phi) = \rho^{(2)}(f_1;\Phi)  + \rho^{(2)}(f_2;\Phi)  - \rho^{(2)}(f_0;\Phi);
          \]
        \item\label{the:properties_of_L2-torsion_of_self_homo:Product_formula} \emph{Product formula}\\[1mm]
          Let $f \colon Y \to Y$ be a weak $\Phi$-homotopy equivalence for the $\FIN$-finite
          $G$-space $Y$ and let $H$ be a group. Let $Z$ be a $\FIN$-finite $H$-space.
          Denote by $\chi^{(2)}(Z;\caln(H))$ the $L^2$-Euler characteristic of $Z$. Suppose
          that $f$ is $\det$-finite.

          Then $f \times \id_Z \colon Y \times Z\to Y \times Z$ is a weak
          $\Phi \times \id_H \colon G \times H \to G \times H$-homotopy equivalence for
          the $\FIN$-finite $G \times H$-space $Y \times Z$, is $\det$-finite, and we get
          \[
           \rho^{(2)}(f \times \id_Z;\Phi \times \id_H)  = \chi^{(2)}(Z;\caln(H)) \cdot  \rho^{(2)}(f;\Phi);
         \]
       \item\label{the:properties_of_L2-torsion_of_self_homo:L2_acyclic_space}
         \emph{$L^2$-acyclic space}\\[1mm]
         Let $f \colon Y \to Y$ be weak $\Phi$-homotopy equivalence for the $\FIN$-finite
         $G$-space $Y$.  Suppose that $Y$ is $\det$-$L^2$-acyclic in the sense that there
         is a $\det$-$L^2$-acyclic finite proper $G$-$CW$-complex $X$ together with a weak
         homotopy equivalence $X \to Y$.

          Then $(f;\Phi)$ is $\det$-finite and we get 
          \[
             \rho^{(2)}(f;\Phi) = 0.
           \]
           
         \item\label{the:properties_of_L2-torsion_of_self_homo:periodic}
           \emph{Periodic self equivalence}\\[1mm]
           Let $f\colon Y\to Y$ be a weak $\Phi$-homotopy equivalence for a $G$-space $Y$.
           Suppose that there exists $n \in \IZ^{\ge1}$ such that $\Phi^n = \id_G$ and
           $f^n$ is $G$-homotopic to the identity $\id_Y$.

            Then $(f;\Phi)$ is $\det$-finite and we get
          \[
          \rho^{(2)}(f;\Phi)=0.
          \]
    \end{enumerate}
  \end{theorem}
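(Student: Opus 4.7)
The unifying strategy is that $\rho^{(2)}(f;\Phi)$ is by construction equal to $\rho^{(2)}(T_{\widehat{f};\Phi};\caln(G\rtimes_{\Phi}\IZ))$, so each clause should reduce to the corresponding clause of Theorem~\ref{the:Basic_properties_of_L_upper_2-torsion} via naturality of the telescope construction $T_{\widehat{f};\Phi}$ under the pertinent operation on groups and on $G$-spaces. I would first dispatch equivariant homotopy invariance~\ref{the:properties_of_L2-torsion_of_self_homo:homotopy_invariance}: a $\Phi$-equivariant cellular lift of $u$ intertwining the chosen $\widehat{f}$ and $\widehat{f'}$ up to $\Phi$-homotopy induces a $(G\rtimes_{\Phi}\IZ)$-homotopy equivalence of telescopes, whence Theorem~\ref{the:Basic_properties_of_L_upper_2-torsion}~\ref{the:Basic_properties_of_L_upper_2-torsion:G-homotopy_invariance} concludes. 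With this established, Lemma~\ref{lem:checking_Det-finite_for_Phi-automorphism} reduces $\det$-finiteness checks in all remaining clauses to exhibiting a single good telescope.

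The structural clauses are then direct translations. For restriction~\ref{the:properties_of_L2-torsion_of_self_homo:restriction}, restricting $T_{\widehat{f};\Phi}$ along the finite index inclusion $H\rtimes_{\Phi|_H}\IZ\hookrightarrow G\rtimes_{\Phi}\IZ$ recovers $T_{\widehat{f}|_H;\Phi|_H}$; for induction~\ref{the:properties_of_L2-torsion_of_self_homo:induction}, induction of the telescope along the same inclusion handles the identity; for finite quotients~\ref{the:properties_of_L2-torsion_of_self_homo:finite_quotient}, pullback through the finite extension intertwines the constructions; for the sum formula~\ref{the:properties_of_L2-torsion_of_self_homo:Sum_formula}, the telescope of $f$ is the $(G\rtimes_{\Phi}\IZ)$-pushout of the telescopes of $f_0,f_1,f_2$; for the product formula~\ref{the:properties_of_L2-torsion_of_self_homo:Product_formula}, $T_{\widehat{f}\times\id_Z;\Phi\times\id_H}$ is canonically homeomorphic to $T_{\widehat{f};\Phi}\times Z$ as a $((G\times H)\rtimes_{\Phi\times\id_H}\IZ)$-space; and for multiplicativity~\ref{the:properties_of_L2-torsion_of_self_homo:trace_formula:multiplicativity}, the restriction of $T_{\widehat{f};\Phi}$ to the index-$n$ subgroup $G\rtimes_{\Phi^n}\IZ$ is $(G\rtimes_{\Phi^n}\IZ)$-homotopy equivalent to $T_{\widehat{f}^n;\Phi^n}$, which combined with restriction gives the factor $n$. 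In each case the cited part of Theorem~\ref{the:Basic_properties_of_L_upper_2-torsion} produces the desired identity of $L^2$-torsions.

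I expect the trace formula~\ref{the:properties_of_L2-torsion_of_self_homo:trace_formula} and the inner-automorphism clause~\ref{the:properties_of_L2-torsion_of_self_homo:composite_with_l_g} to be the main obstacle, since they compare $L^2$-torsions computed over different (though isomorphic) twisted products. For inner automorphisms, the assignment $t\mapsto gt$ yields a group isomorphism $G\rtimes_{c_g\circ\Phi}\IZ\xrightarrow{\cong}G\rtimes_{\Phi}\IZ$, and I would verify that under this identification the telescope $T_{l_g\circ\widehat{f};c_g\circ\Phi}$ is $(G\rtimes_{\Phi}\IZ)$-homotopy equivalent to $T_{\widehat{f};\Phi}$, after which equivariant homotopy invariance closes the argument. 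For the trace formula, the ``conjugation by $\Psi$'' construction yields an isomorphism $G\rtimes_{\Psi'\circ\Psi}\IZ\cong G'\rtimes_{\Psi\circ\Psi'}\IZ$, and an explicit shift built from $\widehat{f}$ identifies the two telescopes up to equivariant homotopy. The bookkeeping required to produce the intertwining homotopies inside the pushout defining the telescope is where most of the delicate work will reside.

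Finally, for the $L^2$-acyclic clause~\ref{the:properties_of_L2-torsion_of_self_homo:L2_acyclic_space} I would choose the CW-approximation $X$ to be $\det$-$L^2$-acyclic; then all three corners of the pushout defining $T_{\widehat{f};\Phi}$ are $\det$-$L^2$-acyclic (by induction from $G$ to $G\rtimes_{\Phi}\IZ$ and since $[0,1]$ is contractible), so the sum formula forces $\rho^{(2)}(f;\Phi)=0$. For the periodic clause~\ref{the:properties_of_L2-torsion_of_self_homo:periodic}, Lemma~\ref{lem_T_upper_PhI_u_det_L2-acyclic}~\ref{lem_T_upper_PhI_u_det_L2-acyclic:peridoc_implies_det_L2-acyclic} supplies $\det$-finiteness; applying multiplicativity and then equivariant homotopy invariance reduces to $n\cdot\rho^{(2)}(f;\Phi)=\rho^{(2)}(\id_Y;\id_G)$, and I would identify $T_{\id_X;\id_G}$ canonically with $X\times\IR$, where $\IZ$ acts by translation on $\IR$. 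Since $\IR$ is $\det$-$L^2$-acyclic over $\caln(\IZ)$ with vanishing $L^2$-torsion, the product formula applied with $\IR$ as the $\det$-$L^2$-acyclic factor yields $\rho^{(2)}(X\times\IR;\caln(G\times\IZ))=\chi^{(2)}(X;\caln(G))\cdot\rho^{(2)}(\IR;\caln(\IZ))=0$, completing the proof.
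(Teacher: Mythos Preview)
Your plan is correct and matches the paper's overall strategy: every clause is reduced to the corresponding clause of Theorem~\ref{the:Basic_properties_of_L_upper_2-torsion} via a naturality statement for the telescope $T_{\widehat{f};\Phi}$. A few tactical differences are worth noting.

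For~\ref{the:properties_of_L2-torsion_of_self_homo:homotopy_invariance} the paper is slicker than your outline: rather than lifting $u$ and comparing two telescopes, it simply observes that if $(X,a,\widehat{f})$ is an approximation package for $(Y,f)$, then $(X,u\circ a,\widehat{f})$ is one for $(Y',f')$, so both torsions equal $\rho^{(2)}(T_{\widehat{f};\Phi})$ by definition and Lemma~\ref{lem:checking_Det-finite_for_Phi-automorphism}---no comparison of telescopes and hence no simple-homotopy bookkeeping is needed here. You have also misjudged where the work lies: clauses~\ref{the:properties_of_L2-torsion_of_self_homo:trace_formula} and~\ref{the:properties_of_L2-torsion_of_self_homo:composite_with_l_g} are in fact short (an explicit group isomorphism $\mu$ and a $\mu$-homeomorphism of telescopes, a few lines each), whereas the sum formula~\ref{the:properties_of_L2-torsion_of_self_homo:Sum_formula} is by far the longest argument in the paper, requiring a separate lemma to build compatible CW-approximations $X_0\hookrightarrow X_1,X_2$ and compatible $\Phi$-self-maps $\widehat{f}_0,\widehat{f}_1,\widehat{f}_2$ via mapping cylinders before one can form the pushout of telescopes. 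Your one-line ``the telescope of $f$ is the pushout of the telescopes'' hides exactly this construction.

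Conversely, your route for~\ref{the:properties_of_L2-torsion_of_self_homo:Product_formula} is cleaner than the paper's: the identification $T_{\widehat{f}\times\id_Z;\Phi\times\id_H}\cong T_{\widehat{f};\Phi}\times Z$ followed by Theorem~\ref{the:Basic_properties_of_L_upper_2-torsion}~\ref{the:Basic_properties_of_L_upper_2-torsion:Product_formula} is direct, whereas the paper reduces to $Z=H/L$ by cell-induction and then uses~\ref{the:properties_of_L2-torsion_of_self_homo:induction} and~\ref{the:properties_of_L2-torsion_of_self_homo:restriction}. For~\ref{the:properties_of_L2-torsion_of_self_homo:periodic} your explicit $X\times\IR$ computation is correct but unnecessary: once~\ref{the:properties_of_L2-torsion_of_self_homo:homotopy_invariance} and~\ref{the:properties_of_L2-torsion_of_self_homo:trace_formula:multiplicativity} are available, $n\cdot\rho^{(2)}(f;\Phi)=\rho^{(2)}(f^n;\Phi^n)=\rho^{(2)}(f^{2n};\Phi^{2n})=2n\cdot\rho^{(2)}(f;\Phi)$ already forces vanishing.
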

  \begin{proof}~\ref{the:properties_of_L2-torsion_of_self_homo:homotopy_invariance} Choose a
cellular $G$-approximation $(X,a)$ of $Y$ for a finite $G$-$CW$-complex $X$ of
 determinant class and a $\Phi$-homotopy equivalence
$\widehat{f} \colon X \to X$ such that $a \circ \widehat{f}$ and $f \circ a$ are
$\Phi$-homotopic. Then $a' := u \circ a \colon X \to Y'$ is a cellular $G$-approximation
and $a' \circ \widehat{f}$ and $f' \circ a'$ are $G$-homotopic. Now we conclude
\[
\rho^{(2)}(f;\Phi)  = \rho^{(2)}(T_{\widehat{f};\Phi};\caln(G \rtimes_{\Phi} \IZ))  = \rho^{(2)}(f';\Phi).
\]
from Definition~\ref{def:L2-torsion_of_a_selfhomotopy_equivalence}.
\\[1mm]~\ref{the:properties_of_L2-torsion_of_self_homo:trace_formula}
Because of Definition~\ref{def:L2-torsion_of_a_selfhomotopy_equivalence}
it suffices to show for finite proper $G$-$CW$-complexes $X$ and $X'$ which are of determinant class,
a $\Psi$-homotopy equivalence $f \colon X \to X'$, and a 
$\Psi'$-homotopy equivalence $f' \colon X' \to X$ 
\begin{equation}
\rho^{(2)}(T_{f' \circ f, \Psi' \circ \Psi}; \caln(G \rtimes_{\Psi' \circ \Psi} \IZ)) =
 \rho^{(2)}(T_{f \circ f', \Psi\circ \Psi'}; \caln(G' \rtimes_{\Psi \circ \Psi'} \IZ)) 
\label{the:properties_of_L2-torsion_of_self_homo:homotopy_invariance:eq_1}
\end{equation}
holds. We obtain an isomorphism of groups
$\mu \colon G \rtimes_{\psi' \circ \psi} \IZ \xrightarrow{\cong} G' \rtimes_{\Psi \circ
  \Psi'} \IZ$ by sending $gt^n$ to $\psi(g)t^n$.  It is not hard to check that there is a
simple $\mu$-homotopy equivalence
$T_{f' \circ f, \Psi' \circ \Psi} \to T_{f \circ f', \Psi\circ \Psi'}$.
Now~\eqref{the:properties_of_L2-torsion_of_self_homo:homotopy_invariance:eq_1} follows
from Theorem~\ref{the:Basic_properties_of_L_upper_2-torsion}~%
\ref{the:Basic_properties_of_L_upper_2-torsion:G-homotopy_invariance}.
\\[1mm]~\ref{the:properties_of_L2-torsion_of_self_homo:trace_formula:multiplicativity} Let
$\pr \colon G \times_{\Phi} \IZ \to \IZ$ be the projection.  Then $\pr^{-1}(n \cdot \IZ)$
has index $n$ in $G \times_{\Phi} \IZ$ and can be identified with $G \times_{\Phi^n} \IZ$.
The restriction of $T_{\widehat{f};\Phi}$ to
$\pr^{-1}(n \cdot \IZ) = G \times_{\Phi^n} \IZ$ is simple $G \times_{\Phi^n} \IZ$-homotopy
equivalent to $T_{\widehat{f}^n;\Phi^n}$. Now the claim follows from
Theorem~\ref{the:Basic_properties_of_L_upper_2-torsion}~%
\ref{the:Basic_properties_of_L_upper_2-torsion:G-homotopy_invariance}
and~\ref{the:Basic_properties_of_L_upper_2-torsion:passage_to_subgroups_of_finite_index}.
\\[1mm]~\ref{the:properties_of_L2-torsion_of_self_homo:restriction} We can view
$H \rtimes_{\Phi_H} \IZ$ as a subgroup of finite index in $G \times_{\Phi} \IZ$.  The
restriction of the $G \times_{\Phi} \IZ$-space $T_{\widehat{f};\Phi}$ to
$G \times_{\Phi} \IZ$ can be identified with the $H \times_{\Phi|_H} \IZ$-space
$T_{\widehat{f}|_H;\Phi|_H}$, where $\widehat{f}|_H$ is the $\Phi|_H$-homotopy equivalence
obtained from $\widehat{f}$ by restriction.  Now the claim follows from
Theorem~\ref{the:Basic_properties_of_L_upper_2-torsion}~%
\ref{the:Basic_properties_of_L_upper_2-torsion:passage_to_subgroups_of_finite_index}.
\\[1mm]~\ref{the:properties_of_L2-torsion_of_self_homo:induction} We can view
$H \rtimes_{\Phi|_H} \IZ$ as a subgroup of $G \rtimes_{\Phi} \IZ$. The
$G \rtimes_{\Phi} \IZ$-space $T_{\widehat{F};\Phi}$ can be identified with
$(G \rtimes_{\Phi} \IZ) \times_{H \rtimes_{\Phi|_H} \IZ} T_{\widehat{f};\Phi|_H}$.  Now
the claim follows from Theorem~\ref{the:Basic_properties_of_L_upper_2-torsion}~%
\ref{the:Basic_properties_of_L_upper_2-torsion:induction}.
\\[1mm]~\ref{the:properties_of_L2-torsion_of_self_homo:finite_quotient} Note that we
obtain an exact sequence
$1 \to K \to G \rtimes_{\widetilde{\Phi}} \IZ \xrightarrow{\widehat{p}} Q \rtimes_{\Phi} \IZ \to 1$,
where $\widehat{p}$ sends $gt^n$ to $p(g)t^n$. Now assertion~\ref{the:properties_of_L2-torsion_of_self_homo:finite_quotient}
follows from Theorem~\ref{the:Basic_properties_of_L_upper_2-torsion}~%
\ref{the:Basic_properties_of_L_upper_2-torsion:finite_quotient} since 
$T_{p^*\widehat{f};\widetilde{\Phi}}$ is $\widehat{p}^*T_{\widehat{f};\widetilde{\Phi}}$.        
\\[1mm]~\ref{the:properties_of_L2-torsion_of_self_homo:composite_with_l_g} The map
$l_g \colon X \xrightarrow{\cong} X$ is a $c_g$-homeomorphism.  Let
$\mu \colon G \rtimes_{\Phi} \IZ \xrightarrow{\cong} G \rtimes_{c_g \circ \Phi} \IZ$ be
the group isomorphism sending $g't^n$ to $g'(g^{-1}t)^n$. Then we obtain a
cellular $\mu$-homeomorphism $T_{\widehat{f};\Phi} \xrightarrow{\cong} T_{l_g \circ \widehat{f};c_g \circ \Phi}$
since the following diagram commutes
\[
\xymatrix{(G\rtimes_{\Phi} \IZ)\times_G X \ar[r]^{q_{\widehat{f}}} \ar[d]_{\mu \times_{G} \id_X}
    &
    (G\rtimes_{\Phi} \IZ)\times_G X \ar[d]^{\mu \times_{G} \id_X}
    \\
    (G\rtimes_{\Phi} \IZ)\times_G X \ar[r]^{q_{l_g \circ \widehat{f}}} 
    &
   (G\rtimes_{\Phi} \IZ)\times_G X
  }
\]
where $q_{\widehat{f}}$ sends $(g't^n, x)$ to $(g't^{n-1}, \widehat{f}(x))$
and $q_{l_g \circ \widehat{f}}$ sends $(g't^n, x)$ to $(g't^{n-1}, l_g \circ \widehat{f}(x))$.
This implies
\begin{multline*}
      \rho^{(2)}(l_g \circ \widehat{f};c_g \circ \Phi)
      = \rho^{(2)}(T_{l_g \circ \widehat{f},c_g\circ \Phi} ;\caln(G \times_{c_g \circ \Phi} \IZ))
      \\
      = \rho^{(2)}(T_{\widehat{f};\Phi} ;\caln(G \times_{\Phi} \IZ))
      = \rho^{(2)}(\widehat{f};\Phi).
    \end{multline*}%
~\ref{the:properties_of_L2-torsion_of_self_homo:Sum_formula} We can asume because of
    assertion~\ref{the:properties_of_L2-torsion_of_self_homo:homotopy_invariance} without
    loss of generality that both inclusions $j_l \colon Y_0 \to Y_l$ for $l = 1,2$ are
    $G$-cofibrations, otherwise replace $j_2$ by the inclusion of $Y_0$ into the mapping
    cylinder of $j_2$.

    By assumption the $G$-space $Y_l$ has the weak $G$-homotopy type of a proper finite
    $G$-$CW$-complex for $l = 0,1,2$. Then there is a commutative diagram of $G$-spaces
\[
  \xymatrix{X_1 \ar[d]^{a_1}
    &
    X_0 \ar[l]_{i_1} \ar[d]^{a_0}  \ar[r]^{i_2} 
    & X_2 \ar[d]^{a_2}
    \\
    Y_1 
    &
    Y_0 \ar[l]_{j_1} \ar[r]^{j_2} 
    & Y_2
  }
\]
such that $i_l \colon X_0 \to X_l$ is an inclusion of proper finite $G$-$CW$-complexes
for $l = 1,2$, see~\cite[4.31 on page~76]{Lueck(1989)}. We obtain a commutative diagram of $G$-spaces
\[
  \xymatrix{X_1 \ar[d]^{b_1}
    &
    X_0 \ar[l]_{i_1} \ar[d]^{b_0}  \ar[r]^{i_2} 
    & X_2 \ar[d]^{b_2}
    \\
    \cyl(a_1) 
    &
    \cyl(a_0) \ar[l]_{k_1}  \ar[r]^{k_2} 
    & \cyl(a_2)
    \\
    Y_1  \ar[u]_{c_1}
    &
    Y_0 \ar[l]_{j_1} \ar[r]^{j_2}  \ar[u]_{c_0}
    & Y_2 \ar[u]_{c_0}
  }
\]
where the vertical maps are the canoncial inclusions into the mapping cylinders, and $k_l$
is the obvious map induced by $i_l$ and $j_l$ for $l = 0,1$. Note $a_l$ is a weak
$G$-homotopy equivalence and $b_l$ is are $G$-homotopy equivalence for $l = 0,1,2$. One
also obtains a commutative diagram of $G$-spaces
\[
  \xymatrix{
    \cyl(a_1)  \ar[d]^{p_1}
    &
    \cyl(a_0) \ar[l]_{k_1}  \ar[r]^{k_2}  \ar[d]^{p_0}
    & \cyl(a_2) \ar[d]^{p_2}
    \\
    Y_1 
    &
    Y_0 \ar[l]_{j_1} \ar[r]^{j_2}  
    & Y_2
  }
\]
where the vertical maps are the canonical projections. Note that $p_l$ is a $G$-homotopy
equivalence and $p_l \circ b_l = a_l$ holds for $l = 0,1,2$. For the proof we need the
following lemma.

\begin{lemma}\label{lem:construction_of_self_homotopy_equivalence}
  In the situation above we can construct $\Phi$-homotopy equivalences
  $\widehat{f_l} \colon X_l \to X_l$ and $v_l \colon \cyl(a_l) \to \cyl(a_l)$ for
  $l = 0,1,2$ such that
  \[\begin{array}{lcrll}
    \widehat{f_l} \circ i_l
    & = &
    i_l \circ\widehat{f_0}
    & & \text{for}\; l = 1,2;
    \\
    v_l \circ k_l
    & = &
    k_l \circ v_0
    & & \text{for}\; l = 1,2;
    \\
    v_l \circ b_l
    & = &
    b_l \circ \widehat{f_l}
      & &
     \text{for}\; l = 0,1,2;
     \\
    v_l \circ c_l
    & = &
    c_l \circ f_l
    & &
   \text{for}\; l = 0,1,2,
    \end{array}
  \]
  holds.
\end{lemma}
\begin{proof}
  Define $v_l \colon \cyl(a_l) \to \cyl(a_l)$ to be $c_l \circ f_l \circ p_l$ for $l = 0,1,2$.
  Then we get $v_l \circ c_l = c_l \circ f_l$ for $l = 0,1,2$ and $v_l \circ k_l= k_l \circ v_0$ for $l = 1,2$.
  
  We can find for $l = 0,1,2$ a $\Phi$-homotopy equivalence
  $\widehat{f}_l \colon X_l \to X_l$ such that $v_l \circ b_l$ and
  $b_l \circ \widehat{f}_l$ are $G$-homotopic by the Equivariant Whitehead Theorem,
  see~\cite[Theorem~2.4 on page~36]{Lueck(1989)}.  Note that then also $f_l \circ p_l$ and
  $p_l \circ \widehat{f}_l$ are $G$-homotopic for $l = 0,1,2$.  Since
  $b_0 \amalg c_0 \colon X_0 \amalg Y_0 \to \cyl(a_0)$ is a $G$-cofibration, we can change
  $v_0 \colon \cyl(a_0) \to \cyl(a_0)$ up to $\Phi $-homotopy such that
  $v_0 \circ b_0 = b_0 \circ \widehat{f}_0$ holds and we keep
  $v_0 \circ c_0 = c_0 \circ f_0$.  Fix $l \in \{1,2\}$. Since $i_l \colon X_0 \to X_l$ is
  a $G$-cofibration, we can change $\widehat{f}_l$ up to $G$-homotopy such that
  $\widehat{f}_l \circ i_l = i_l \circ \widehat{f}_0$ holds. Since the inclusion the
  subspace of $\cyl(a_l)$ given by $b_l(X_l) \cup c_l(Y_l) \cup k_l(\cyl(f_0))$ into
  $\cyl(f_l)$ is a cofibration, we can change $v_l$ up to $\Phi$-homotopy such that
  $v_l \circ k_l = k_l \circ v_0$ and $v_l \circ b_l = b_l \circ \widehat{f}_l$ hold and
  we keep $v_l \circ c_l = k_l \circ f_0$ and $v_l \circ k_l = k_l \circ v_0$.
\end{proof}
Consider the three $G$-pushouts
\[\xymatrix{X_0 \ar[r]^{i_1} \ar[d]_{i_2}
    & X_1 \ar[d]
    \\
    X_2 \ar[r]
    &
    X
  }
  \quad  \quad
\xymatrix{Y_0 \ar[r]^{j_1} \ar[d]_{j_2}
    & Y_1 \ar[d]
    \\
    Y_2 \ar[r]
    &
    Y
  }
   \quad \quad 
\xymatrix{\cyl(a_0) \ar[r]^{k_1} \ar[d]_{k_2}
    & \cyl(a_1) \ar[d]
    \\
    \cyl(a_2) \ar[r]
    &
    Z.
  }
\]
From the $\Phi$-homotopy equivalences $\widehat{f}_l$ for $l =0,1,2$, we obtain by the
$G$-pushout property a $\Phi$-homotopy equivalence $\widehat{f} \colon X \to X$.  From the
$\Phi$-homotopy equivalences $v_l$ for $l =0,1,2$ we obtain by the $G$-pushout property a
$\Phi$-homotopy equivalence $v \colon Z \to Z$. From the $\Phi$-homotopy equivalences
$f_l$ for $l =0,1,2$ we obtain by the $G$-pushout property a $\Phi$-homotopy equivalence
$f \colon Y \to Y$. From the weak $G$-homotopy equivalences $b_l$ for $l =0,1,2$ we obtain
by the $G$-pushout property a weak $G$-homotopy equivalence $b \colon X \to Z$. From the
$G$-homotopy equivalences $c_l$ for $l =0,1,2$ we obtain by the $G$-pushout property a
$G$-homotopy equivalence $c\colon X \to Z$. One easily checks that following diagram
comutes
\[\xymatrix{X \ar[r]^{\widehat{f}} \ar[d]^b
& X \ar[d]^b
\\
Z \ar[r]^{v}
&Z
\\
Y \ar[r]^{f} \ar[u]_{c}
&
Y  \ar[u]_{c}.
}
\]
Hence the following diagram for $l = 0,1,2$
\[
\xymatrix{X_l \ar[r]^{\widehat{f}_l} \ar[d]_{a_l}
  & X_l \ar[d]^{a_l}
  \\
  Y_l \ar[r]_{f_l}
  &
  Y_l
}
\]
and the diagram
\[
\xymatrix{X \ar[r]^{\widehat{f}} \ar[d]_{a}
  & X \ar[d]^{a}
  \\
  Y \ar[r]_{f}
  &
  Y
}
\]
commute up to $G$-homotopy and have weak $G$-homotopy equivalences as vertical arrows, and
$\Phi$-homotopy equivalences as horizontal arrows. Since $X_l$ for $l =0,1,2$ is a proper
finite $G$-$CW$-complexes of determinant class, $X$ is a proper finite $G$-$CW$-complexes
of determinant class by
Theorem~\ref{the:properties_of_L2-torsion_of_self_homo}~\ref{the:properties_of_L2-torsion_of_self_homo:Sum_formula}.
We get
\begin{eqnarray*}
  \rho^{(2)}(f_l;\Phi)
  & = &
  \rho^{(2)}(T_{\widehat{f_l};\Phi}; \caln(G \rtimes_{\Phi} \IZ)) \quad \text{for}\; l = 0,1,2;
  \\
  \rho^{(2)}(f;\Phi)
  & = &
  \rho^{(2)}(T_{\widehat{f};\Phi}; \caln(G \rtimes_{\Phi} \IZ)).
\end{eqnarray*}      
from the definitions. We obtain a $G$-pushout  of finite proper $G \rtimes_{\Phi} \IZ$-$CW$-complexes
\[\xymatrix{T_{\widehat{f}_0;\Phi} \ar[r] \ar[d]
    &
    T_{\widehat{f}_1;\Phi} \ar[d]
    \\
    T_{\widehat{f}_2;\Phi} \ar[r]
    &
    T_{\widehat{f};\Phi}
  }
\]
where all maps are inclusions of $G \rtimes_{\Phi} \IZ$-$CW$-complexes.
Theorem~\ref{the:Basic_properties_of_L_upper_2-torsion}~%
\ref{the:Basic_properties_of_L_upper_2-torsion:Sum_formula}
implies
\begin{multline*}
  \rho^{(2)}(T_{\widehat{f};\Phi};\caln(G \rtimes_{\Phi} \IZ))
  \\
  =
  \rho^{(2)}(T_{\widehat{f_1};\Phi};\caln(G \rtimes_{\Phi} \IZ)) +
  \rho^{(2)}(T_{\widehat{f_2};\Phi};\caln(G \rtimes_{\Phi} \IZ)) -
  \rho^{(2)}(T_{\widehat{f_0};\Phi};\caln(G \rtimes_{\Phi} \IZ)).
\end{multline*}
This finishes the proof of
assertion~\ref{the:properties_of_L2-torsion_of_self_homo:Sum_formula}.
\\[1mm]~\ref{the:properties_of_L2-torsion_of_self_homo:Product_formula}.
Obviously we can assume without loss of generality that $Z$ itself is a finite proper $H$-$CW$-complex.
Moreover we can replace $\chi^{(2)}(Z;\caln(H))$ by the orbifold Euler characteristic
\[\chi_{\operatorname{orb}}(Z) = \sum_{e} (-1)^{\dim(e)} \cdot \frac{1}{|H_e|},
\]
where $e$ runs through the equivariant cells of $Z$, because of
$\chi^{(2)}(Z;\caln(H)) = \chi_{\operatorname{orb}}(Z)$, see~\cite[Subsection~6.6.1]{Lueck(2002)}. Using
assertions~\ref{the:properties_of_L2-torsion_of_self_homo:homotopy_invariance}
and~\ref{the:properties_of_L2-torsion_of_self_homo:Sum_formula} one can reduce the claim
to the special case $Z = H/L$ for any finite subgroup $L \subseteq H$ by induction over
the dimension of $Z$ and subinduction over the number of top-dimensional equivalent
cells of $Z$. We get an isomorphism of proper finite $G \times H$-$CW$-complexes
$G \times H \times_{G \times L} \pr_L^* Y \xrightarrow{\cong} Y \times H/L$ by sending
$((g,h),y)$ to $(gy,hL)$, where 
$\pr_L^* Y$ is the $G \times L$-$CW$-space obtained from $Y$ by restriction with the projection 
$\pr_L\colon  G \times L \to G$. Under this identification the $\Phi \times \id_H$-map $f \times \id_{H/L}$
becomes the induction from $G \times L$ to $G \times H$ of the $\Phi \times \id_L$-map
$\pr_L^* f \colon \pr_L^*Y \to \pr_L^*Y$.  We conclude from
assertion~\ref{the:properties_of_L2-torsion_of_self_homo:induction} that suffices to show
the claim for the $\Phi \times \id_L$-map $\pr_L^* f \colon \pr_L^*Y \to \pr_L^*Y$.  This
follows from assertion~\ref{the:properties_of_L2-torsion_of_self_homo:restriction} applied
to $G \subseteq G \times L$.
\\[1mm]~\ref{the:properties_of_L2-torsion_of_self_homo:L2_acyclic_space} This follows from
Theorem~\ref{the:Basic_properties_of_L_upper_2-torsion}~%
\ref{the:Basic_properties_of_L_upper_2-torsion:Sum_formula} and
assertion~\ref{the:properties_of_L2-torsion_of_self_homo:Product_formula} applied to the
$G \rtimes_{\Phi} \IZ$-pushout~\eqref{def_T_upper_Phi_widehat(f)}.
\\[1mm]~\ref{the:properties_of_L2-torsion_of_self_homo:periodic}
Lemma~\ref{lem_T_upper_PhI_u_det_L2-acyclic}~\ref{lem_T_upper_PhI_u_det_L2-acyclic:peridoc_implies_det_L2-acyclic}
implies that $(f,\Phi)$ is $\det$-finite.  Now $\rho^{(2)}(f;\Phi) = 0$ follows from
assertions~\ref{the:properties_of_L2-torsion_of_self_homo:homotopy_invariance}
and~\ref{the:properties_of_L2-torsion_of_self_homo:trace_formula:multiplicativity}.

This finishes the proof
of Theorem~\ref{the:properties_of_L2-torsion_of_self_homo}.
\end{proof}
         

\typeout{---------- Section 5:  $L^2$-torsion of an automorphism of an admissible group   ---------------}

\section{\texorpdfstring{$L^2$}{L2}-torsion of a group  automorphism}%
\label{sec:L2-torsion_of_an_automorphism_of_a_det-finite_group}

Consider a group automorphism $\Phi \colon G \xrightarrow{\cong} G$. Then there is  up to
$\Phi$-homotopy precisely one $\Phi$-homotopy equivalence $f_{\Phi} \colon \eub{G} \to \eub{G}$.
Now suppose that $G$ is $\FIN$-finite. In Definition~\ref{def:det-finite_Phi-self-homotopy_equivalence}
we have defined when we call $f_{\Phi}$ to be $\det$-finite.
This this notion depends only on the $\Phi$-homotopy type of $f_{\Phi}$
by Theorem~\ref{the:properties_of_L2-torsion_of_self_homo}~%
\ref{the:properties_of_L2-torsion_of_self_homo:homotopy_invariance},
Hence the following definition makes sense, i.e., is independent of the choice of $f_{\Phi}$.

\begin{definition}[Det-finite group automorphism]\label{def:Det-finite_group_automorphism}
  A group automorphism $\Phi \colon G \xrightarrow{\cong} G$ of a $\FIN$-finite group $G$
  is called \emph{$\det$-finite}, if $f_{\Phi}$ is $\det$-finite for one (and hence every)
  choice of a $\Phi$-homotopy equivalence
  $f_{\Phi} \colon \eub{G} \to \eub{G}$.
\end{definition}

Again by Theorem~\ref{the:properties_of_L2-torsion_of_self_homo}~%
\ref{the:properties_of_L2-torsion_of_self_homo:homotopy_invariance} the following definition makes sense,
i.e., is independent of the choice of $f_{\Phi}$.

\begin{definition}[$L^2$-torsion of a group automorphism]\label{def:L2_torsion_of_a_group_automorphism}
  Let $\Phi \colon G \xrightarrow{\cong} G$ be an automorphism of the  $\FIN$-finite group $G$.
  Suppose that $\Phi$ is $\det$-finite.

  Then we define its $L^2$-torsion
  \[\rho^{(2)}(\Phi) := \rho^{(2)}(f_{\Phi};\Phi)  \in \IR
    \]
    for any choice of a $\Phi$-homotopy equivalence $f_{\Phi} \colon \eub{G} \to \eub{G}$,
    where $\rho^{(2)}(f_{\Phi};\Phi)$ has been introduced in Definition~\ref{def:L2-torsion_of_a_selfhomotopy_equivalence}.
  \end{definition}

  Next we collect the basic properties of this invariant.

  \begin{theorem}[Main properties of the $L^2$-torsion of a group automorphism]%
\label{the:elementary_properties_of_L2-torsion_of_autos}\

\begin{enumerate}

\item\label{the:elementary_properties_of_L2-torsion_of_autos:trace} \emph{Trace formula}\\[1mm]
      Let $\Psi  \colon G \xrightarrow{\cong} G'$ and $\Psi' \colon G' \to G$ be group isomorphisms.
      Suppose that $\Psi' \circ \Psi$ or $\Psi \circ \Psi'$ is $\det$-finite. Then both
      $\Psi' \circ \Psi$ and  $\Psi \circ \Psi'$ are $\det$-finite and we get
       \[
        \rho^{(2)}(\Psi' \circ \Psi) = \rho^{(2)}(\Psi \circ \Psi');
       \]

     \item\label{the:elementary_properties_of_L2-torsion_of_autos:multiplicativity}
       \emph{Multiplicativity}\\[1mm]
       Let $\Phi \colon G \xrightarrow{\cong} G$ a group automorphism of 
       the $\FIN$-finite group $G$. Consider $n \in \IZ^{\ge 1}$.
       Suppose  that $\Phi$ or $\Phi^n$ is $\det$-finite.

       Then both $\Phi$ and $\Phi^n$ are  $\det$-finite and we get
       \[
        \rho^{(2)}(\Phi^n) = n \cdot \rho^{(2)}(\Phi);
      \]

      \item\label{the:elementary_properties_of_L2-torsion_of_autos:periodic}
        \emph{Periodic automorphism}\\[1mm]
        Let $\Phi \colon G \xrightarrow{\cong} G$ a group automorphism of 
        the $\FIN$-finite group $G$. Consider $n \in \IZ^{\ge 1}$. Suppose that
        $\Phi^n = \id_G$.

        Then $\Phi$ is $\det$-finite and we get
        \[
          \rho^{(2)}(\Phi) = 0;
        \]

        \item\label{the:elementary_properties_of_L2-torsion_of_autos:Product_formula} \emph{Product formula}\\[1mm]
          Let $\Phi \colon G \xrightarrow{\cong} G$ a group automorphism of 
          the $\FIN$-finite group $G$. Suppose  that $\Phi$ is $\det$-finite. Let $H$ be a $\FIN$-finite group.

          Then $G \times H$ is a $\FIN$-finite group, the group automorphism $\Phi \times \id_H\colon G \times H\xrightarrow{\cong} G \times H$
          is $\det$-finite, and we get
          \[
           \rho^{(2)}(\Phi \times \id_H)  = \chi^{(2)}(H) \cdot  \rho^{(2)}(\Phi);
         \]

    \item\label{the:elementary_properties_of_L2-torsion_of_autos:restriction}
    \emph{Restriction}\\[1mm]
        Let $H \subseteq G$ be a subgroup of $G$ of finite index $[G:H]$.
        Let $\Phi \colon G \xrightarrow{\cong}  G$ be a group automorphism satisfying $\Phi(H) = H$.
        Suppose that one of the following two  conditions holds:

        \begin{enumerate}
        \item $G$ is $\FIN$-finite and $\Phi$ is $\det$-finite;
        \item $H$ is $\FIN$-finite and $\Phi|_H$ is $\det$-finite.
        \end{enumerate}

        Then both conditions are satisfied and we get
               \[
         \rho^{(2)}(\Phi|_H) = [G:H] \cdot \rho^{(2)}(\Phi);
       \]

       \item\label{the:elementary_properties_of_L2-torsion_of_autos:finite_quotient} \emph{Finite quotients}\\[1mm]
       Let $1 \to K \to G \xrightarrow{p} Q \to 1$ be an extension of groups with finite
       $K$.  Let $\widetilde{\Phi} \colon G \xrightarrow{\cong} G$ and
       $\Phi \colon Q \xrightarrow{\cong} Q$ be group automorphisms satisfying
       $p \circ \widetilde{\Phi} = \Phi \circ p$. 
       If $\Phi$ or $\widetilde{\Phi}$ is $\det$-finite, then both $\Phi$
       and $\widetilde{\Phi}$ are $\det$-finite and we get

    \[
     \rho^{(2)}(\widetilde{\Phi} ) = \frac{\rho^{(2)}(\Phi)}{|K|};
   \]

      \item\label{the:elementary_properties_of_L2-torsion_of_autos:conjugation_invariance}
  \emph{Conjugation invariance}\\[1mm]
  Let $\Phi \colon G \xrightarrow{\cong} G$ a group automorphism of 
  the $\FIN$-finite group $G$. Consider $g \in G$. Let $c_g \colon G \to G$ be associated the inner automorphisms
  sending $g'$ to $gg'g^{-1}$.  Suppose that $\Phi$ or $c_g \circ \Phi$ is $\det$-finite.

  Then  both  $\Phi$ and $c_g \circ \Phi$ are $\det$-finite and we get
    \item\label{the:elementary_properties_of_L2-torsion_of_autos:L2-acyclic_group} \emph{$L^2$-acyclic group}\\[1mm]
      Let $\Phi \colon G \xrightarrow{\cong}  G$ be an automorphism of the $\FIN$-finite group $G$.
      Suppose that one and (hence every) finite $G$-$CW$-model for $\eub{G}$ is $\det$-$L^2$-acyclic. Then
      $\Phi$ is $\det$-finite and we get
      \[
      \rho^{(2)}(f) = 0.
      \]
    \end{enumerate}
  \end{theorem}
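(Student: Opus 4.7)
The strategy is uniform: each assertion reduces to the corresponding statement of Theorem~\ref{the:properties_of_L2-torsion_of_self_homo} applied to the tautological self-homotopy equivalence $f_{\Phi} \colon \eub{G} \to \eub{G}$, together with a small remark identifying the relevant classifying space for proper actions under the operation in question. So the plan is to unwind Definition~\ref{def:L2_torsion_of_a_group_automorphism} in each case and quote the matching item of Theorem~\ref{the:properties_of_L2-torsion_of_self_homo}.

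More concretely, I would proceed as follows. For the trace formula~(i), observe that any $\Psi$-map $f_{\Psi} \colon \eub{G} \to \eub{G'}$ and $\Psi'$-map $f_{\Psi'} \colon \eub{G'} \to \eub{G}$ have composites that are $\Psi' \circ \Psi$ (resp.\ $\Psi \circ \Psi'$)-homotopy equivalences of $\eub{G}$ (resp.\ $\eub{G'}$); applying Theorem~\ref{the:properties_of_L2-torsion_of_self_homo}~\ref{the:properties_of_L2-torsion_of_self_homo:trace_formula} delivers the statement. Multiplicativity~(ii) and periodicity~(iii) follow from items~\ref{the:properties_of_L2-torsion_of_self_homo:trace_formula:multiplicativity} and~\ref{the:properties_of_L2-torsion_of_self_homo:periodic} of that theorem applied to $f_{\Phi}^n$; note that when $\Phi^n = \id_G$, the composite $f_{\Phi}^n$ is automatically $G$-homotopic to $\id_{\eub{G}}$, which is exactly the hypothesis of item~\ref{the:properties_of_L2-torsion_of_self_homo:periodic}. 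Conjugation invariance~(vii) follows from item~\ref{the:properties_of_L2-torsion_of_self_homo:composite_with_l_g}, because $l_g \circ f_{\Phi}$ is a $(c_g \circ \Phi)$-map, hence a legitimate representative of $f_{c_g \circ \Phi}$.

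For the product formula~(iv) I would use that $\eub{G} \times \eub{H}$ is a model for $\eub{G \times H}$, so $f_{\Phi} \times \id_{\eub{H}}$ is a valid $\Phi \times \id_H$-homotopy equivalence; item~\ref{the:properties_of_L2-torsion_of_self_homo:Product_formula} then gives the formula with $\chi^{(2)}(\eub{H};\caln(H)) = \chi^{(2)}(H)$. For restriction~(v), the fact $[G:H] < \infty$ together with the observation that every finite subgroup of $H$ is a finite subgroup of $G$ ensures that $\eub{G}|_H$ is a model for $\eub{H}$, so $f_{\Phi}|_H$ represents $f_{\Phi|_H}$ and item~\ref{the:properties_of_L2-torsion_of_self_homo:restriction} applies. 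For finite quotients~(vi), since $K$ is finite the pulled-back space $p^*\eub{Q}$ is a proper cocompact $G$-space whose isotropy groups sit in extensions by $K$; every finite subgroup of $G$ thus acts with contractible fixed-point set, so $p^*\eub{Q}$ is a finite $G$-$CW$-model for $\eub{G}$, and $p^*f_{\Phi}$ realizes $f_{\widetilde{\Phi}}$ so that item~\ref{the:properties_of_L2-torsion_of_self_homo:finite_quotient} yields the claimed division by $|K|$. Finally, $L^2$-acyclicity~(viii) is a direct instance of item~\ref{the:properties_of_L2-torsion_of_self_homo:L2_acyclic_space} taking $Y = X = \eub{G}$.

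The only non-bookkeeping point, and thus the main place requiring care, is the identification of classifying spaces for proper actions under restriction to a finite-index subgroup, passage to a product, and pullback along a finite-kernel quotient. Each of these identifications is classical, but since $\det$-finiteness in Definition~\ref{def:det-finite_Phi-self-homotopy_equivalence} is phrased in terms of a $G$-$CW$-approximation rather than a specific model, it is important to record that the natural candidates ($\eub{G}|_H$, $\eub{G} \times \eub{H}$, $p^*\eub{Q}$) are indeed finite proper $CW$-models for $\eub{H}$, $\eub{G \times H}$, $\eub{G}$ respectively, so that $\det$-finiteness of $\Phi$ as in Definition~\ref{def:Det-finite_group_automorphism} transfers correctly in both directions and Definition~\ref{def:L2_torsion_of_a_group_automorphism} can be unwound consistently on both sides of each equation.
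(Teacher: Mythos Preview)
Your proposal is correct and follows exactly the same approach as the paper: each item is deduced directly from the matching assertion of Theorem~\ref{the:properties_of_L2-torsion_of_self_homo} by unwinding Definition~\ref{def:L2_torsion_of_a_group_automorphism}. Your additional remarks on why $\eub{G}|_H$, $\eub{G}\times\eub{H}$, and $p^*\eub{Q}$ are models for the relevant classifying spaces are more explicit than the paper (which simply cites the corresponding items without comment), but they are accurate and precisely what is implicitly being used.
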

  \begin{proof}~\ref{the:elementary_properties_of_L2-torsion_of_autos:trace} 
    This follows from  This follows from Theorem~\ref{the:properties_of_L2-torsion_of_self_homo}~%
\ref{the:properties_of_L2-torsion_of_self_homo:trace_formula}
    \\[1mm]~\ref{the:elementary_properties_of_L2-torsion_of_autos:multiplicativity}
   This follows from Theorem~\ref{the:properties_of_L2-torsion_of_self_homo}~%
\ref{the:properties_of_L2-torsion_of_self_homo:trace_formula:multiplicativity}.
   \\[1mm]~\ref{the:elementary_properties_of_L2-torsion_of_autos:periodic}
   This follows from Theorem~\ref{the:properties_of_L2-torsion_of_self_homo}~%
\ref{the:properties_of_L2-torsion_of_self_homo:periodic}.
   \\[1mm]~\ref{the:elementary_properties_of_L2-torsion_of_autos:Product_formula} 
    This follows from  Theorem~\ref{the:properties_of_L2-torsion_of_self_homo}~%
\ref{the:properties_of_L2-torsion_of_self_homo:Product_formula}.
   \\[1mm]~\ref{the:elementary_properties_of_L2-torsion_of_autos:restriction}
   This follows from Theorem~\ref{the:properties_of_L2-torsion_of_self_homo}~%
\ref{the:properties_of_L2-torsion_of_self_homo:restriction}.
  \\[1mm]~\ref{the:elementary_properties_of_L2-torsion_of_autos:finite_quotient}
 This follows from Theorem~\ref{the:properties_of_L2-torsion_of_self_homo}~%
\ref{the:properties_of_L2-torsion_of_self_homo:finite_quotient}.
  \\[1mm]~\ref{the:elementary_properties_of_L2-torsion_of_autos:conjugation_invariance}
    This follows from Theorem~\ref{the:properties_of_L2-torsion_of_self_homo}~%
\ref{the:properties_of_L2-torsion_of_self_homo:composite_with_l_g}.
    \\[1mm]~\ref{the:elementary_properties_of_L2-torsion_of_autos:L2-acyclic_group}
    This follows from Theorem~\ref{the:properties_of_L2-torsion_of_self_homo}~%
\ref{the:properties_of_L2-torsion_of_self_homo:L2_acyclic_space} 
 \end{proof}

 \begin{remark}[No composition formula]\label{rem:no_compostion_formula}
   We mention that there is \emph{no} composition formula for the $L^2$-torsion of group
   automorphisms.  In other words, the formula
   $\rho^{(2)} (\Phi \circ \Psi) = \rho^{(2)}(\Phi) +\rho^{(2)}(\Psi)$ is \emph{not} true
   in general for two $\det$-finite group automorphisms $\Phi$ and $\Psi$ of the same
   $\FIN$-group $G$.  For example for a pseudo-Anosov diffeomorphism $f$ of a closed
   surface the mapping tori $T_f$ and $T_{f^{-1}}$ are homeomorphic and hence have the
   same non-zero $L^2$-torsion, but the $L^2$-torsion of $T_{f \circ f^{-1}} = T_{\id}$
   vanishes.
 \end{remark}

Let $S$ be a compact  connected orientable $2$-dimensional manifold, possibly with boundary. Let
$f\colon  S \to S$ be an orientation preserving homeomorphism. The mapping torus
$T_f$ is a compact  connected orientable
$3$-manifold whose boundary is empty or a disjoint
union of $2$-dimensional tori. 
By the Nielson--Thurston decomposition for mapping classes and Thurston's hyperbolisation theorem
there is a maximal family of embedded  incompressible tori,
which are pairwise not isotopic and not boundary parallel, such
that it decomposes $T_f$ into pieces, which are Seifert or hyperbolic.
Let $M_1$, $M_2$, $\ldots$,  $M_r$ be the hyperbolic pieces.
They all have finite volume $\vol(M_i)$.

 Choose a base point $x \in S$ and a path $w \colon I \to S$ from $x$ to $f(x)$.
  Let $t_w \colon \pi_1(S,f(x)) \xrightarrow{\cong} \pi_1(S,x)$ be the isomorphism sending the class $[v]$
  of a loop $v$ in $S$ at $f(x)$ to the class $[w \ast v \ast w^-]$ of the loop $w \ast v \ast w^-$ at $x$
  given by concatenation of paths, where $w^-$ is the inverse of $w$ given by $w^-(t) = w(1-t)$.
  Let $\Phi \colon \pi_1(S,x) \xrightarrow{\cong} \pi_1(S,x)$ be the automorphism given by the composite
  $\pi_1(S,x) \xrightarrow{\pi_1(f,x)} \pi_1(S,f(x)) \xrightarrow{t_w} \pi_1(S,s)$.
  Then $\pi_1(S,x)$ is $\det$-finite and the real number $\rho^{(2)}(\Phi)$ is defined.
  Theorem~\ref{the:elementary_properties_of_L2-torsion_of_autos}~%
\ref{the:elementary_properties_of_L2-torsion_of_autos:trace} 
      and~\ref{the:elementary_properties_of_L2-torsion_of_autos:conjugation_invariance} imply
      that $\rho^{(2)}(\Phi)$ is independent of the choices of $x \in S$ and $w$. 

 \begin{theorem}[Surface automorphisms]\label{the:rho(surface_homeomorphism)}
   In the situation described above, the following statement true:

   \begin{enumerate}
\item\label{the:rho(surface_homeomorphism):special}
 If $S$ is $S^2$, $D^2$, or $T^2$,
 then $\rho^{(2)}(\Phi) = 0$;

 \item\label{the:rho(surface_homeomorphism):general}
If  $S$ is not $S^2$, $D^2$, or $T^2$, then
\[
\rho^{(2)}(\Phi)
=  \frac{-1}{6\pi} \cdot \sum_{i=1}^r \vol(M_i).
\]
\end{enumerate}
\end{theorem}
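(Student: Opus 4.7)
The plan is to reduce $\rho^{(2)}(\Phi)$ to the classical $L^2$-torsion of the mapping torus $T_f$, viewed as a compact $3$-manifold, and then invoke L\"uck--Schick's computation of $L^2$-torsion of $3$-manifolds~\cite{Lueck-Schick(1999)}. Since $\pi_1(S,x)$ is torsion-free, the universal cover $\widetilde{S}$ is a finite free $\pi_1(S)$-$CW$-model for $\eub{\pi_1(S)}$. A lift $\widehat{f} \colon \widetilde{S} \to \widetilde{S}$ of $f$ is a $\Phi$-homotopy equivalence, and the $\pi_1(S) \rtimes_\Phi \IZ$-$CW$-complex $T_{\widehat{f};\Phi}$ from~\eqref{def_T_upper_Phi_widehat(f)} is $\pi_1(S) \rtimes_\Phi \IZ$-homotopy equivalent to the universal covering of $T_f$, whose fundamental group equals $\pi_1(T_f) = \pi_1(S) \rtimes_\Phi \IZ$. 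Since fundamental groups of compact $3$-manifolds are Farrell--Jones groups, they satisfy condition (DFJ), so $\Phi$ is $\det$-finite and
\[
\rho^{(2)}(\Phi) = \rho^{(2)}\bigl(\widetilde{T_f}; \caln(\pi_1(T_f))\bigr).
\]

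For assertion~\ref{the:rho(surface_homeomorphism):general}, $S$ is aspherical with non-positive Euler characteristic, so $T_f$ is a compact, orientable, aspherical $3$-manifold with empty or toroidal boundary. Its geometric decomposition coincides with the JSJ decomposition described in the statement, and the main theorem of~\cite{Lueck-Schick(1999)} yields
\[
\rho^{(2)}\bigl(\widetilde{T_f}\bigr) = -\frac{1}{6\pi} \sum_{i=1}^r \vol(M_i),
\]
which is precisely the desired formula.

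For assertion~\ref{the:rho(surface_homeomorphism):special}, I would argue that in each of the three exceptional cases the JSJ decomposition of $T_f$ contains no hyperbolic pieces, so the same L\"uck--Schick formula gives $0$. More directly: if $S \in \{S^2, D^2\}$, then $\pi_1(T_f) = \IZ$ with $B\IZ = S^1$ a finite $\det$-$L^2$-acyclic model for $\eub{\IZ}$, and the vanishing follows from Theorem~\ref{the:elementary_properties_of_L2-torsion_of_autos}~\ref{the:elementary_properties_of_L2-torsion_of_autos:L2-acyclic_group}; if $S = T^2$, then $\pi_1(T_f)$ is virtually polycyclic, hence has a finite $\det$-$L^2$-acyclic model for its classifying space, and the same theorem applies.

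The main obstacle is the identification of $T_{\widehat{f};\Phi}$ with a model for $\widetilde{T_f}$ as a $\pi_1(S)\rtimes_\Phi\IZ$-space. This is essentially formal: quotienting the $\pi_1(S)\rtimes_\Phi\IZ$-pushout~\eqref{def_T_upper_Phi_widehat(f)} by the full group recovers a standard mapping cylinder model for the mapping torus of $f$, and one reads off the deck transformations to get $\pi_1(T_f) = \pi_1(S)\rtimes_\Phi\IZ$.
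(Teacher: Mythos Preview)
Your approach is correct and is precisely what underlies the paper's one-line citation to~\cite[Theorem~7.28]{Lueck(2002)}: identify $T_{\widehat{f};\Phi}$ with $\widetilde{T_f}$ and apply the L\"uck--Schick $3$-manifold formula.

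Two minor fixes are needed. First, the chain ``Farrell--Jones $\Rightarrow$ (DFJ) $\Rightarrow$ $\Phi$ is $\det$-finite'' is not valid: the Farrell--Jones part of (DFJ) controls Whitehead torsion (Lemma~\ref{:vanishing_of_the_determinant_map}), not determinant class of chain complexes. What you want is that $\pi_1(S)$ is residually finite, hence sofic, hence satisfies the Determinant Conjecture; then Lemma~\ref{lem:criteria_for_det-finite}\ref{lem:criteria_for_det-finite:det-finite_for_autos_G} gives $\det$-finiteness of $\Phi$. This is in fact already recorded in the paragraph preceding the theorem. Second, for $S \in \{S^2, D^2\}$ the group $G = \pi_1(S)$ is trivial and $\eub{\{1\}} = \pt$ is \emph{not} $L^2$-acyclic, so Theorem~\ref{the:elementary_properties_of_L2-torsion_of_autos}\ref{the:elementary_properties_of_L2-torsion_of_autos:L2-acyclic_group} does not apply; use instead Theorem~\ref{the:elementary_properties_of_L2-torsion_of_autos}\ref{the:elementary_properties_of_L2-torsion_of_autos:periodic} (since $\Phi = \id_{\{1\}}$), or compute $\rho^{(2)}(\IR;\caln(\IZ)) = 0$ directly. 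For $S = T^2$ your citation is correct once applied to $G = \pi_1(S) = \IZ^2$ rather than to $\pi_1(T_f)$.
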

\begin{proof}
This follows from~\cite[Theorem~7.28 on page~307]{Lueck(2002)}. 
\end{proof}

Let $\Phi \colon G \to G$ be an automorphism of the $\FIN$-finite group $G$.  Let $X$
be a finite $G$-$CW$-complex such that for any finite subgroup $H \subseteq G$ the
$H$-fixed point set $X^H$ is contractible. Let 
$a \colon X \to X$ be  a $\Phi$-homeomorphism such that the following diagram commutes
\[\xymatrix{X \ar[rr]^{a} \ar[dr]_{\pr} & & X \ar[ld]^{\pr}
    \\
    & X/G & }
\]
where $\pr$ is the canonical projection.  Suppose that the isotropy group
$G_x$ for each $x \in X$ is $\FIN$-finite.
    
For $\overline{c} \in \pi_0(X_n \setminus X_{n-1})/G$ choose an element
$c \in \pi_0(X_n \setminus X_{n-1})$ representing $\overline{c}$, an element $x \in c$,
and an element $g \in G$ satisfying $a(x) = g x$. Note that the element $g$ exists
because of the assumption $\pr \circ a = \pr$. The isotropy group of $a(x) \in X$ agrees
with the isotropy group $G_{gx} = gG_xg^{-1} $ of $gx \in X$ and is given by
    \begin{multline*}
      \{g' \in G \mid g'a(x) = a(x)\} = \{g' \in G \mid a(\Phi^{-1}(g')x)= a(x)\}
      \\
      = \{g' \in G \mid \Phi^{-1}(g')x= x\} =  \{g' \in G \mid \Phi^{-1}(g') \in G_x\}  = \Phi(G_x).
    \end{multline*}
    Hence we get $gG_xg^{-1} = \Phi(G_x)$. Therefore we can define an automorphism of the
    $\FIN$-finite group $G_x$
    \[\Phi_{c,x,g} \colon G_x \to G_x, \quad g' \mapsto g^{-1}\Phi(g')g.
    \]
    Now suppose that $\Phi_{c,x,g}$ is $\det$-finite. 
    Then  the real number $\rho^{(2)}(\Phi_{c,x,g};\caln(G_x))$ is defined.
    The condition that $\Phi_{c,x,g}$ is $\det$-finite and
    the real number $\rho^{(2)}(\Phi_{c,x,g};\caln(G_x))$
    depend only on $\overline{c}$
    and are  independent of the choices of $c$, $x$, and $g$ by
    Theorem~\ref{the:elementary_properties_of_L2-torsion_of_autos}~%
\ref{the:elementary_properties_of_L2-torsion_of_autos:trace}
and~\ref{the:elementary_properties_of_L2-torsion_of_autos:conjugation_invariance}.

Consider $\overline{c} \in \pi_0(X_n \setminus X_{n-1})/G$. Then we say that
\emph{$\Phi_{\overline{c}}$ is $\det$-finite} and in this case can define 

        \begin{equation}
    \rho^{(2)}(\Phi_{\overline{c}}) \in \IR
    \label{rho_upper_(2)(Phi_overline(c)}
  \end{equation}
  by requiring that $\Phi_{c,x,g}$ is $\det$-finite
and  putting $\rho^{(2)}(\Phi_{\overline{c}}) = \rho^{(2)}(\Phi_{c,x,g}; \caln(G_x))$
  for one (and hence every) choice of $c$, $x$, and $g$.
    
   \begin{theorem}\label{the:application_to_automorphism}
     Let $\Phi \colon G \to G$ be an automorphism of the $\FIN$-finite group $G$.  Let
     $X$ be a finite $G$-$CW$-complex such that for any finite subgroup $H \subseteq G$
     the $H$-fixed point set $X^H$ is contractible and there is a cellular
     $\Phi$-homeomorphism $a \colon X \to X$ satisfying $\pr \circ a = \pr$ for the projection $\pr \colon X \to X/G$.
     Suppose that the isotropy group $G_x$ for each $x \in X$ is $\FIN$-finite
     and  that the automorphisms $\Phi_{\overline{c}}$ is $\det$-finite
     for every $\overline{c} \in \pi_0(X_n \setminus X_{n-1})/G$
     and $n \in \IZ^{\ge 0}$.

    Then the group $G$ is $\FIN$-finite,  $\Phi_{\overline{c}}$ is $\det$-finite,  and we get
  \[
    \rho^{(2)}(\Phi) = \sum_{n \ge 0} \; (-1)^n \cdot
    \sum_{\overline{c} \in \pi_0(X_n \setminus X_{n-1})/G}  \rho^{(2)}(\Phi_{\overline{c}}),
\]
where $\rho^{(2)}(\Phi_{\overline{c}})$ has been defined in~\eqref{rho_upper_(2)(Phi_overline(c)}.
 \end{theorem}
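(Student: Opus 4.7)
The plan is to construct a finite proper $G$-$CW$-model $Y$ for $\eub{G}$ together with a $\Phi$-homotopy equivalence $\widehat{a}\colon Y \to Y$, and then compute $\rho^{(2)}(\widehat{a};\Phi)=\rho^{(2)}(\Phi)$ cell-by-cell using the sum, product, induction, and conjugation-invariance formulas collected in Theorem~\ref{the:properties_of_L2-torsion_of_self_homo}. For the first step I apply Theorem~\ref{the:properties_of_the_blow_up} with family $\FIN$, complex $X$, and finite $G_x$-$CW$-models $E^n_{i_n}$ for $\eub{G_x}$, which exist because every isotropy group $G_x$ is $\FIN$-finite by assumption. This produces a finite proper $G$-$CW$-complex $Y$ together with a $G$-homotopy equivalence $Y \xrightarrow{\simeq} \eub{G}\times X$. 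Since $X^H$ is contractible for every finite $H\subseteq G$, the projection $\eub{G}\times X \to \eub{G}$ is a $G$-homotopy equivalence, so $Y$ is itself a finite $G$-$CW$-model for $\eub{G}$, and in particular $G$ is $\FIN$-finite.

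Next I will build $\widehat{a}\colon Y \to Y$ inductively over the skeleta. For each orbit $\overline{c}$ of open $n$-cells of $X$, fix representatives $c$, $x\in c$, and $g \in G$ with $a(x)=gx$; because $\Phi_{\overline{c}}$ is $\det$-finite, I may choose a $\Phi_{c,x,g}$-homotopy equivalence $b_{\overline c}\colon E^n_{i_n}\to E^n_{i_n}$. On the blown-up cell $G\times_{G_x} E^n_{i_n} \times D^n$ of $Y$ set $\widehat{a}(g',e,t)=(\Phi(g')g,\,b_{\overline c}(e),\,t)$. A short calculation using $g^{-1}\Phi(h)g=\Phi_{c,x,g}(h)\in G_x$ for $h\in G_x$ together with the equivariance of $b_{\overline c}$ shows this descends to a well-defined $\Phi$-map. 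The extension of $\widehat{a}$ across the attaching maps of $Y$ then proceeds by the Equivariant Whitehead Theorem, in parallel with the construction of $Y$ itself; Definition~\ref{def:L2_torsion_of_a_group_automorphism} identifies $\rho^{(2)}(\Phi)$ with $\rho^{(2)}(\widehat{a};\Phi)$.

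To identify each cellular contribution I invoke the inner-automorphism formula (Theorem~\ref{the:properties_of_L2-torsion_of_self_homo}~\ref{the:properties_of_L2-torsion_of_self_homo:composite_with_l_g}) with $g_0=g^{-1}$: the map $l_{g^{-1}}\circ\widehat{a}|_{G\times_{G_x}E^n_{i_n}}$ is a $(c_{g^{-1}}\circ\Phi)$-map, and $c_{g^{-1}}\circ\Phi$ preserves $G_x$ and restricts to $\Phi_{c,x,g}$ there, so the induction formula (Theorem~\ref{the:properties_of_L2-torsion_of_self_homo}~\ref{the:properties_of_L2-torsion_of_self_homo:induction}) yields
\[
\rho^{(2)}\bigl(\widehat{a}|_{G\times_{G_x}E^n_{i_n}};\Phi\bigr)=\rho^{(2)}\bigl(b_{\overline c};\Phi_{c,x,g}\bigr)=\rho^{(2)}(\Phi_{\overline c}).
\]
Iterating the sum formula (Theorem~\ref{the:properties_of_L2-torsion_of_self_homo}~\ref{the:properties_of_L2-torsion_of_self_homo:Sum_formula}) along the filtration $Y_{-1}\subseteq Y_0\subseteq\cdots\subseteq Y$, and using the product formula (Theorem~\ref{the:properties_of_L2-torsion_of_self_homo}~\ref{the:properties_of_L2-torsion_of_self_homo:Product_formula}) applied with the trivial group on the factors $D^n$ and $S^{n-1}$ (with $\chi^{(2)}(D^n)=1$ and $\chi^{(2)}(S^{n-1})=1-(-1)^n$), I obtain
\[
\rho^{(2)}(Y_n;\Phi)-\rho^{(2)}(Y_{n-1};\Phi)=(-1)^n\sum_{\overline c\in\pi_0(X_n\setminus X_{n-1})/G}\rho^{(2)}(\Phi_{\overline c}),
\]
and summing over $n$ delivers the claimed formula.

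The main obstacle will be the inductive construction of $\widehat{a}$ in the second step, where one must simultaneously realize the cellular permutation $a$ on $X$ and the chosen $\Phi_{c,x,g}$-maps on each blown-up fibre, compatibly with the attaching maps selected in Section~\ref{sec:Blowing_up_orbits_by_classifying_spaces_of_families}. This is an equivariant obstruction-theoretic argument in the $\Phi$-equivariant category; once it is carried out, the remainder of the proof is a bookkeeping exercise with the formal properties of $\rho^{(2)}$ already established.
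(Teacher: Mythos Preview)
Your cellular computation of each contribution $\rho^{(2)}(\Phi_{\overline c})$ via the inner-automorphism and induction formulas is correct and matches the paper's diagram chase. The difference lies in how the $\Phi$-self-map is obtained. You first blow up to a finite proper $G$-$CW$-model $Y$ for $\eub{G}$ and then try to build a $\Phi$-homotopy equivalence $\widehat{a}\colon Y\to Y$ that respects the filtration $Y_{-1}\subseteq Y_0\subseteq\cdots$; as you note, this requires an equivariant obstruction argument to align the prescribed fibrewise maps $b_{\overline c}$ with the attaching maps $\mu^n$ chosen in the blowup. The paper avoids this entirely: it works on the $\FIN$-finite $G$-space $\eub{G}\times X$ (which is \emph{not} a finite $G$-$CW$-complex), where the $\Phi$-map $f\times a$ is given for free, and runs the same sum/product/induction argument along the filtration $\eub{G}\times X_n$. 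At the end it uses that the projection $\eub{G}\times X\to\eub{G}$ is a $G$-homotopy equivalence to identify $\rho^{(2)}(f\times a;\Phi)$ with $\rho^{(2)}(\Phi)$. This is precisely why the paper set up $\rho^{(2)}(f;\Phi)$ for weak $\Phi$-equivalences of $\FIN$-finite spaces rather than only for finite $G$-$CW$-complexes, and the paper remarks on this immediately after the proof.

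Your route can be made to work: the obstruction vanishes because $Y_{n-1}\simeq_G \eub{G}\times X_{n-1}$, so on the $\eub{G}$-factor any two $\Phi$-maps are $\Phi$-homotopic, while on the $X_{n-1}$-factor both candidate maps cover the same attaching map of $X$ composed with $a$. But carrying this out carefully is more laborious than the paper's approach, which trades the explicit finite model for the flexibility of the $\FIN$-finite framework.
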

 \begin{proof}
   Choose a cellular $\Phi$-homotopy equivalence
 $f \colon \eub{G} \to \eub{G}$. If $\eub{G}$ is a $\FIN$-finite $G$-space, then we get from
   the definitions that $\Phi$ is $\det$-finite if and only if $(f;\Phi)$ is $\det$-finite, and in this case
   \begin{equation}
     \rho^{(2)}(\Phi)  = \rho^{(2)}(f;\Phi).
     \label{proof_of_the:application_to_automorphism:(1)}
   \end{equation}

   Let $I_m = \pi_0(X_m \setminus X_{m-1})/G$ be the set of equivariant $m$-cells of $X$
   for $m = 0,1,2, \ldots, \dim(X)$.  We show by induction for
   $n = -1,0,1,2 , \ldots, \dim(X)$ that $\eub{G} \times X_n$ is a $\FIN$-finite $G$-space,
   $(f \times a|_{X_n};\Phi)$  is
   $\det$-finite, and we have
   \begin{equation}
     \rho^{(2)}(f \times a|_{X_n};\Phi) =
     \sum_{m =  0}^n \; (-1)^m \cdot
     \sum_{\overline{c} \in I_m}  \rho^{(2)}(\Phi_{\overline{c}}).
     \label{the:application_to_automorphism:claim_for_X_n}
   \end{equation}
    The induction beginning
   $n = -1$ is trivial, since $X_{-1}$ is empty. The induction step from $(n-1)$ to
   $0 \le n \le \dim(X)$ is done as follows.

Choose  a cellular $G$-pushout
\begin{equation}
  \xymatrix@!C=10em{\coprod_{\overline{c} \in I_n} G/H_{\overline{c}} \times S^{n-1}
    \ar[r]^-{q = \coprod_{i \in \overline{c}} q_{\overline{c}}} \ar[d]
    &
    X_{n-1} \ar[d]
    \\
    \coprod_{\overline{c} \in I_n} G/H_{\overline{c}} \times D \ar[r]^-{Q = \coprod_{i \in \overline{c}} Q_{\overline{c}}}
    &
    X_n.
  }
  \label{proof_of_the:application_to_automorphism:(2)}
\end{equation}
Then we get a $G$-pushout by taking the cross product with $\eub{G}$  and the diagonal $G$-actions
\begin{equation}
\xymatrix{\coprod_{\overline{c} \in I_n} \eub{G}  \times G/H_{\overline{c}}  \times S^{n-1} \ar[r] \ar[d]
    &
    \eub{G}  \times X_{n-1} \ar[d]
    \\
    \coprod_{\overline{c} \in I_n} \eub{G}  \times G/H_{\overline{c}} \times  D \ar[r]
    &
    \eub{G}  \times X_n.
  }
  \label{proof_of_the:application_to_automorphism:(3)}
\end{equation}
Let $x_{\overline{c}} \in X_n$ be the image of $(eH_{\overline{c}}, 0)$ for $0 \in D^n$ the origin under
the characteristic map $Q_{\overline{c}}$ appearing the
$G$-pushout~\eqref{proof_of_the:application_to_automorphism:(2)}.  Choose
$g_{\overline{c}} \in G$ such that $a(x_{\overline{c}}) = g_{\overline{c}} x_{\overline{c}}$ holds.  Then we get a $\Phi$-map
$u_{\overline{c}} \colon G/H_{\overline{c}} \to G/H_{\overline{c}}$ sending $gH_{\overline{c}}$ to
$\Phi(g)g_{\overline{c}}H_{\overline{c}}$ such that the following diagram commutes
\begin{equation}
  \xymatrix{\coprod_{\overline{c} \in I_n} G/H_{\overline{c}} \times S^{n-1} \ar[r]^-{q}
    \ar[d] \ar[ddrrr]^(0.75){\quad \;\coprod_{\overline{c} \in I_n} u_{\overline{c}} \times \id_{S^{n-1}}}
    &
    X_{n-1} \ar[d] \ar[ddrrr]^{a|_{X_{n-1}}}
    &
    &
    &
    \\
    \coprod_{\overline{c} \in I_n} G/H_{\overline{c}} \times D^n \ar[r]^-{Q}
    \ar[ddrrr]_(0.5){\coprod_{\overline{c} \in I_n} u_{\overline{c}} \times \id_{D^n}\quad}
    &
    X_n \ar[ddrrr]_(0.22){a}|(0.42)\hole|(0.44)\hole|(0.46)\hole|(0.48)\hole|(0.50)\hole|(0.52)\hole|(0.54)\hole|(0.56)\hole|(0.58)\hole|(0.60)\hole
    &
    &
    &
    \\
    & & &
    \coprod_{\overline{c} \in I_n} G/H_{\overline{c}} \times S^{n-1} \ar[r]^(0.64){q} \ar[d]
    &
    X_{n-1} \ar[d]
    \\
    &
    &
    &
    \coprod_{\overline{c} \in I_n} G/H_{\overline{c}} \times D^n \ar[r]^(0.64){Q}
    &
    X_n
  }
  \label{proof_of_the:application_to_automorphism:(4)}
\end{equation}
We obtain a  $G$-homeomorphisms
\[
  s \colon G \times_{H_{\overline{c}}} (\res_{G}^{H_{\overline{c}}}\eub{G})\xrightarrow{\cong} \eub{G} \times G/H_{\overline{c}}
\]
sending $(g,x)$ to $(gx,gH_{\overline{c}})$.  The following diagram of $G$-spaces commutes
\begin{equation}
\xymatrix{G \times_{H_{\overline{c}}} \eub{G} \ar[r]^s \ar[d]_v
  &
  \eub{G} \times G/H_{\overline{c}} \ar[d]^{f \times u_{\overline{c}}}
  \\
  G \times_{H_{\overline{c}}} \eub{G} \ar[r]^s
  &
  \eub{G} \times G/H_{\overline{c}}
}
\label{proof_of_the:application_to_automorphism:(5)}
\end{equation}
where the vertical maps are $\Phi$-maps and $v$ sends $(g,x)$ to
$(\Phi(g)g_{\overline{c}},g_{\overline{c}}^{-1}f(x))$.  The map
$w \colon \eub{G} \to \eub{G}$ sending $x$ to $g_{\overline{c}}^{-1}f(x)$ satisfies
\[
  w(hx) = g_{\overline{c}}^{-1}f(hx) = g_{\overline{c}}^{-1}\Phi(h)f(x)
  = g_{\overline{c}}^{-1}\Phi(h)g_{\overline{c}}g_{\overline{c}}^{-1}f(x) 
  = g_{\overline{c}}^{-1}\Phi(h)g_{\overline{c}}w(x)
\]
and hence is $\Phi_{\overline{c}}$-equivariant, where
$\Phi_{\overline{c}} \colon H_{\overline{c}} \to H_{\overline{c}}$ sends $h$ to
$g_{\overline{c}}^{-1}\Phi(h)g_{\overline{c}}$.  Note that
$\res_G^{H_{\overline{c}}} \eub{G}$ is a model for $\eub{H_{\overline{c}}}$ and
$\eub{H_{\overline{c}}}$ is by assumption a $\FIN$-finite $G$-space. Since
$\Phi_{\overline{c}}$ is $\det$-finite by assumption, $(w;\Phi_{\overline{c}})$ is
$\det$-finite.  Now Theorem~\ref{the:properties_of_L2-torsion_of_self_homo}~%
\ref{the:properties_of_L2-torsion_of_self_homo:induction} implies that
$\eub{G} \times G/H_{\overline{c}}$ is $\FIN$-$G$-space,
$(f \times u_{\overline{c}};\Phi)$ is $\det$-finite, and we get
 \begin{equation}
   \rho^{(2)}(\Phi_{\overline{c}}) = \rho^{(2)}(f \times u_{\overline{c}};\Phi).
 \label{proof_of_the:application_to_automorphism:(6)}
\end{equation}

By the induction hypothesis  the $G$-space $\eub{G} \times X_{n-1}$ is $\FIN$-finite,
$f \times a|_{X_{n-1}};\Phi$ is $\det$-finite, and  we have
   \begin{equation}
     \rho^{(2)}(f \times a|_{X_{n-1}};\Phi) =
     \sum_{m =  0}^{n-1} \; (-1)^m \cdot
     \sum_{\overline{c} \in I_m}  \rho^{(2)}(\Phi_{\overline{e}}).
    \label{proof_of_the:application_to_automorphism:(7)}
  \end{equation}
  We conclude from
  Theorem~\ref{the:properties_of_L2-torsion_of_self_homo}~\ref{the:properties_of_L2-torsion_of_self_homo:Sum_formula}
  and~\ref{the:properties_of_L2-torsion_of_self_homo:Product_formula} and the
  $G$-pushout~\eqref{proof_of_the:application_to_automorphism:(3)} that the $G$-space
  $\eub{G} \times X_n$ is $\FIN$-finite, $(f \times a|_{X_n}; \Phi)$ is $\det$-finite, and
  we get
  \begin{equation}
     \rho^{(2)}(f \times a|_{X_n};\Phi) = \rho^{(2)}(f \times a|_{X_{n-1}};\Phi) + (-1)^n \cdot 
    \sum_{\overline{c} \in I_m}  \rho^{(2)}(\Phi_{\overline{e}}).
    \label{proof_of_the:application_to_automorphism:(8)}
  \end{equation}
  Now the induction step from $(n-1)$ to $n$
  follows from~\eqref{proof_of_the:application_to_automorphism:(7)}
  and~\eqref{proof_of_the:application_to_automorphism:(8)}. 

  If we apply~\eqref{the:application_to_automorphism:claim_for_X_n}  in the case $n = \dim(X)$,
  we conclude  that $(f \times a; \Phi)$ is $\det$-finite and we have
  \begin{equation}
     \rho^{(2)}(f \times a;\Phi) =
     \sum_{m \ge   0} \; (-1)^m \cdot
     \sum_{\overline{c} \in I_m}  \rho^{(2)}(\Phi_{\overline{c}}).
     \label{the:application_to_automorphism:claim_for_X}
   \end{equation}
   Since the projection $\eub{G} \times X \to \eub{G}$ is a $G$-homotopy equivalence
   and  $(f \times a; \Phi)$ is $\det$-finite,
   Theorem~\ref{the:properties_of_L2-torsion_of_self_homo}~\ref{the:properties_of_L2-torsion_of_self_homo:trace_formula}
   implies that  $(f;\Phi)$ is $\det$-finite and 
   we have  $\rho^{(2)}(f \times a; \Phi) = \rho^{(2)}(f; \Phi) = \rho^{(2)}(\Phi)$.
   Hence Theorem~\ref{the:application_to_automorphism} follows
   from~\eqref{the:application_to_automorphism:claim_for_X}
     \end{proof}

     Although one may only be interested in $\rho^{(2)}(\Phi)$, it is useful that we have
     the $L^2$-torsion $\rho^{(2)}(f;\Phi)$ of
     Definition~\ref{def:L2-torsion_of_a_selfhomotopy_equivalence} at hand, since in the
     proof of Theorem~\ref{the:application_to_automorphism} one needs to consider this
     more general notion $\rho^{(2)}(f;\Phi)$, see~\eqref{the:application_to_automorphism:claim_for_X_n}.

     \begin{example}[Group extensions]\label{exa:group_extension_general_second}
       Suppose that we can write $G$ as an extension $1 \to K \to G \xrightarrow{p} Q \to 1$      
       and there is a finite model for $\eub{Q}$.   Then we can consider
       the $G$-$CW$-complex $X = p^*\eub{Q}$ obtained from the finite $Q$-$CW$-complex     
  $\eub{Q}$ by restriction with $p$.  Obviously $X^H = \eub{Q}^{p(H)}$ is contractible for
  any finite subgroup $H \subseteq G$.  There is a bijective correspondence between the
  open equivariant cells of $X$ and the open equivariant cells of $\eub{Q}$ given by
  $c \mapsto p(c)$. We have $\dim(c) = \dim(p(c))$ and $G_c = p^{-1}(Q_{p(c)})$.  Hence
  $G_c$ contains $K$ as a subgroup of finite index $[G_c : K] = |Q_{p(c)}|$.  Suppose that
  $G_c$ is $\FIN$-finite for every open cell $c$. (Note that it is not true  that
  a group is a $\FIN$-finite group if it contains a $\FIN$-finite subgroup of finite index,
  see~\cite{Leary-Nucinkis(2003)}.) Consider a group automorphism
  $\Phi \colon G \xrightarrow{\cong} G$ with $\Phi(K) = K$ and $p \circ \Phi = p$
  such that $\Phi|_{G_c}$ is $\det$-finite.
  We conclude from
  Theorem~\ref{the:elementary_properties_of_L2-torsion_of_autos}~%
\ref{the:elementary_properties_of_L2-torsion_of_autos:restriction}
  that $|Q_{p(c)}| \cdot \rho^{(2)}(G_c) = \rho^{(2)}(K)$. Recall the orbifold Euler
  characteristic of $\eub{Q}$
  \[
    \chi_{\operatorname{orb}}(\eub{Q})  =\sum_{c} (-1)^{\dim(c)} \cdot \frac{1}{|Q_{p(c)}|}.
  \]
   which agrees with the $L^2$-Euler characteristic $\chi^{(2)}(\eub{Q};\caln(Q))$.
      
      Theorem~\ref{the:application_to_automorphism}
      implies  that  $G$ is $\FIN$-finite, $\Phi$ is $\det$-finite,  and we have      
      \begin{equation}
        \rho^{(2)}(\Phi) = \chi_{\operatorname{orb}}(\eub{Q}) \cdot \rho^{(2)}(\Phi|_K)
        = \chi^{(2)}(\eub{Q};\caln(Q)) \cdot \rho^{(2)}(\Phi|_K).
        \label{exa:group_extension_general_second:formula}
      \end{equation}
      In particular we get $\rho^{(2)}(\Phi) = 0$ if $\chi^{(2)}(\eub{Q};\caln(Q))$ vanishes.

    Now  assume that $K$ is an infinite amenable $\FIN$-finite group
    and that each group $G_{c}$ is $\FIN$-finite. Consider a group automorphism
    $\Phi \colon G \xrightarrow{\cong} G$ with $\Phi(K) = K$ and $p \circ \Phi = p$.
    Then $K$ and $G_{c}$ for every equivariant cell $c$ are infinite amenable $\FIN$-finite groups
    and satisfy the Determinant Conjecture. Hence
    $\Phi|_K$ and $\Phi|_{G_c}$ for every equivariant cell $c$ are $\det$-finite. 
    Moreover,  $\eub{K}$ is $\det$-$L^2$-acyclic
  and we get $\rho^{(2)}(\Phi|_K) = 0$
    by Theorem~\ref{the:elementary_properties_of_L2-torsion_of_autos}~%
\ref{the:elementary_properties_of_L2-torsion_of_autos:L2-acyclic_group}
    and~\cite[Theorem~6.54~(8) on page~266 and Theorem~6.75 on page~274]{Lueck(2002)}.
    Hence $\Phi$ is $\det$-finite and satisfies $\rho^{(2)}(\Phi) = 0$
    by~\eqref{exa:group_extension_general_second:formula}.

    Note that Theorem~\ref{the:elementary_properties_of_L2-torsion_of_autos}~%
\ref{the:elementary_properties_of_L2-torsion_of_autos:Product_formula} is a special case of this example.
  \end{example}

     \begin{example}[Graph of groups]\label{exa:graph_of_groups_group_autos}
       Let $Y$ be a connected non-empty graph in the sense of~\cite[Definition~1 in
       Section~2.1 on page~13]{Serre(1980)}.  Let $(G,Y)$ be a graph of groups in the
       sense of~\cite[Definition~8 in Section~4.4. on page~37]{Serre(1980)}.  (In the
       sequel we use the notation of~\cite{Serre(1980)}).  An automorphism
       $\varphi \colon (G,T) \xrightarrow{\cong} (G,T)$ consists of a collection of
       automorphisms $\varphi_P \colon G_P \xrightarrow{\cong} G_P$ for every
       $P \in \operatorname{vert}(T)$ and an automorphism
       $\varphi_y \colon G_y \xrightarrow{\cong} G_y$ for every
       $y \in \operatorname{edge}(T)$ which are compatible with the monomorphisms
       $G_y \to G_{t(y)}$ and satisfy $\varphi_{\widetilde{y}} = \varphi_y$.

     Let $P_0$ be an element in $\operatorname{vert}(Y)$ and let $T$ be a maximal tree of
     $Y$. Denote by $\overline{\operatorname{edge}(Y)}$ the quotient of
     $\operatorname{edge}(Y)$ under the involution $y \mapsto \widetilde{y}$. Note that
     $Y$ considered as a $CW$-complex has the set $\operatorname{vert}(T)$ as set of
     $0$-cells and $\overline{\operatorname{edge}(Y)}$ as set of $1$-cells. Since by
     definition $G_{y} = G_{\widetilde{y}}$ holds, we can define for
     $\overline{y} \in \overline{\operatorname{edge}(Y)}$ the group $G_{\overline{y}}$ to
     be $G_y$ for a representative $y \in\operatorname{edge}(Y)$ of $\overline{y}$ and
     analogously define the automorphism
     $\varphi_{\overline{y}} \colon G_{\overline{y}} \xrightarrow{\cong}
     G_{\overline{y}}$.
             
     Let $\pi_1(G,Y,P_0)$ and $\pi_1(G,Y,T)$ be the fundamental groups in the sense
     of~\cite[page~42]{Serre(1980)}. Note that $\pi_1(G,Y,P_0)$ and $\pi_1(G,Y,T)$ are
     isomorphic, see~\cite[Proposition~20 in Section~5.1. on page~44]{Serre(1980)}. Then
     there exists
     \begin{itemize}
     \item A graph $\widetilde{X} = \widetilde{X}(G,Y,T)$;
     \item An action of $\pi = \pi_1(G,Y,T)$ on $\widetilde{X}$;
     \item A morphism $p \colon \widetilde{X} \to X$ inducing an isomorphism $\pi\backslash \widetilde{X} \to Y$;
     \end{itemize}
     such that the following is true
     \begin{itemize}
     \item $\widetilde{X}$ is a tree;
       \item $\widetilde{X}^H$ is contractible for every finite subgroup $H \subseteq \pi$;
      \item $\widetilde{X}$ is a $1$-dimensional $\pi$-$CW$-complex for which there exists $\pi$-pushout
     \[
       \xymatrix{\coprod_{P \in \operatorname{vert}(T)}  \pi/\pi_P \times S^0 \ar[r] \ar[d]
         &
         \coprod_{\overline{y} \in \overline{\operatorname{edge}(Y)}} \pi/\pi_y \ar[d]
        \\
        \coprod_{P \in \operatorname{vert}(T)}  \pi/\pi_P \times D^1 \ar[r]
            &
            \widetilde{X}
          }
        \]
        such that $\pi_P \cong G_P$ for $P \in \operatorname{vert}(T)$ and
        $\pi_{\overline{y}} = G_{\overline{y}}$ for $\overline{y} \in \overline{\operatorname{edge}(Y)}$ holds.
        \item $\varphi$ induces an automorphism $\Phi \colon \pi \xrightarrow{\cong} \pi$
        and a cellular $\Phi$-homeomorphism $f \colon \widetilde{X} \to \widetilde{X}$.
      \end{itemize}
      
     All these claims follows from~\cite[Section~5.3 and Theorem~15 in Section~6.1 on page~58]{Serre(1980)}.

     Now suppose that $Y$ is finite, each of the groups $G_P$ for $P \in \operatorname{vert}(T)$ and
     $G_{\overline{y}}$ for $\overline{y} \in \overline{\operatorname{edge}(Y)}$ are $\FIN$-finite,
     and each of the automorphisms $\varphi_P$ for $P \in \operatorname{vert}(T)$ and
     $\varphi_{\overline{y}}$ for $\overline{y} \in \overline{\operatorname{edge}(Y)}$ is $\det$-finite

     Then we conclude from Theorem~\ref{the:application_to_automorphism} that $\pi$ is $\FIN$-finite,
     $\Phi$ is $\det$-finite, and we get
     \[\rho^{(2)}(\Phi)= \sum_{P \in \operatorname{vert}(T)} \rho^{(2)}(\varphi_P)
       -\sum_{\overline{y} \in \operatorname{vert}(T)} \rho^{(2)}(\varphi_{\overline{y}}).
   \]
 \end{example}

 \begin{example}[Amalgamated Products]\label{exa:Amalgamated_products_group_autos}
   Let $G_0$ be a common subgroup of the group $G_1$ and the group $G_2$. Denote by
   $G = G_1 \ast_{G_0} G_2$ the amalgamated product. Let
   $\Phi_i \colon G_i \xrightarrow{\cong} G_i$ be an automorphism for $i = 0,1,2$ such
   that $\Phi_j|_{G_0} = \Phi_0$ holds for $j = 1,2$. Denote by
   $\Phi \colon G \xrightarrow{\cong} G$ be the automorphism of $G$ induced by the
   automorphisms $\Phi_i$ for $i = 0,1,2$. Suppose that $G_i$ is $\FIN$-finite and $\Phi_i$ is $\det$-finite
   for $i = 0,1,2$.

   Then $G$ is $\FIN$-finite, $\Phi$ is $\det$-finite, and we 
   get
     \[
     \rho^{(2)}(\Phi) = \rho^{(2)}(\Phi_1)  + \rho^{(2)}(\Phi_2) - \rho^{(2)}(\Phi_0).
     \]
     This follows from Example~\ref{exa:graph_of_groups_group_autos} applied to the graph of groups
     associated to $G_1 \ast_{G_0} G_2$, see~\cite[page~43]{Serre(1980)}.
   \end{example}


\section{A short discussion of the finiteness assumptions}%
\label{sec:A_short_discussion_of_the_finiteness_assumptions}

We have introduced the notion of $\FIN$-finite group in
Definition~\ref{def:FIN-finite_group}, of $\FIN$- finite $G$-space in
Definition~\ref{def:FIN-finite_G-space}, of a $\det$-finite $G$-space in
Definition~\ref{def:det_finite_G-space}, of a $\det$-finite $\Phi$-homotopy equivalence
$(f;\Phi)$ of a $\FIN$-$G$-space in
Definition~\ref{def:det-finite_Phi-self-homotopy_equivalence}, and of a $\det$-finite
automorphism of a $\FIN$-finite group in
Definition~\ref{def:Det-finite_group_automorphism}. We have  introduced the notions of the
$L^2$-torsion $\rho^{(2)}(f;\Phi)$ and $\rho^{(2)}(\Phi)$ in
Definitions~\ref{def:L2-torsion_of_a_selfhomotopy_equivalence}
and~\ref{def:L2_torsion_of_a_group_automorphism}. The conditions $\det$-finite were needed
to make sense of $\rho^{(2)}(f;\Phi)$ and $\rho^{(2)}(\Phi)$.

  \begin{definition}[Det-finite group]\label{def:weakly_admissible_group}
  We call a group $G$ \emph{$\det$-finite} if it satisfies the following conditions:
\begin{enumerate}
\item There exists a  finite $G$-$CW$-model for the classifying space  of proper actions $\eub{G}$;
\item For one (and hence all) finite $G$-$CW$-models  $X$ for $\eub{G}$,
  the $G$-$CW$-complex $X$ is of determinant class.
    \end{enumerate}
  \end{definition}

  \begin{lemma}\label{lem:criteria_for_det-finite}
    Let $G$ be a group and $X$ be a $G$-space.

    \begin{enumerate}
    \item\label{lem:criteria_for_det-finite:FIN_finite_for_G}
      The group $G$ is $\FIN$-finite if and only if the $G$-space $\eub{G}$ is $\FIN$-finite;
    \item\label{lem:criteria_for_det-finite:det-finite_for_G}
      The group $G$ is $\det$-finite if and only if the $G$-space $\eub{G}$ is $\det$-finite;
    \item\label{lem:criteria_for_det-finite:det-finite_for_autos_G}
      Suppose that the group $G$ is det-finite.
      Then every automorphism $\Phi \colon G \to G$ is $\det$-finite;      
     \item\label{lem:criteria_for_det-finite:Det_Con}
       Suppose  that $G$ is a $\FIN$-finite-group and satisfies the Determinant Conjecture.
       Consider any automorphism $\Phi \colon G \to G$. Then $\Phi$  is $\det$-finite, the group
       $G \rtimes_{\Phi} \IZ$ is $\det$-$L^2$-acyclic,   and we get
       \[
       \rho^{(2)}(\Phi) = \rho^{(2)}(G \rtimes_{\Phi} \IZ);
       \]
       
     \item\label{lem:criteria_for_det-finite:sofic}
       If the group $G$ is sofic, then it satisfies the Determinant Conjecture.
       In particular any automorphism $\Phi$ of a sofic $\FIN$-finite group $G$ is
       $\det$-finite, $G \rtimes_{\Phi} \IZ$ is $\det$-$L^2$-acyclic,   and we get
       \[\rho^{(2)}(\Phi) = \rho^{(2)}(G \rtimes_{\Phi} \IZ).
       \]
     \end{enumerate}
   \end{lemma}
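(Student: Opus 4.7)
The plan is to handle the five assertions in order, with (iv) carrying the main content and (v) an immediate corollary.

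Assertions (i) and (ii) follow by unwrapping the definitions with the Equivariant Whitehead Theorem. A finite $G$-$CW$-model $X$ for $\eub{G}$ furnishes the tautological $G$-$CW$-approximation $\id_X$; conversely, any $G$-$CW$-approximation $a\colon X \to \eub{G}$ with finite proper source is a weak $G$-homotopy equivalence between $G$-$CW$-complexes, hence a $G$-homotopy equivalence by~\cite[Theorem~2.4 on page~36]{Lueck(1989)}, so $X$ itself models $\eub{G}$. This proves (i), and (ii) follows by the same argument combined with the $G$-homotopy invariance of the determinant class property observed after Definition~\ref{def:det_finite_G-space}.

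For (iii), given $G$ $\det$-finite and $\Phi\colon G \to G$, choose by (ii) a finite $G$-$CW$-model $Y$ for $\eub{G}$ which is of determinant class, and let $\widehat{f}\colon Y \to Y$ be any cellular $\Phi$-homotopy equivalence. Lemma~\ref{lem_T_upper_PhI_u_det_L2-acyclic}~\ref{lem_T_upper_PhI_u_det_L2-acyclic:det_L2-acyclic} shows that $T_{\widehat{f};\Phi}$ is of determinant class, so by Lemma~\ref{lem:checking_Det-finite_for_Phi-automorphism}~\ref{lem:checking_Det-finite_for_Phi-automorphism:check} any $\Phi$-homotopy equivalence $f_{\Phi}\colon \eub{G} \to \eub{G}$ is $\det$-finite, hence so is $\Phi$.

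Assertion (iv) is the main content. The Determinant Conjecture implies that every finite proper $G$-$CW$-complex is of determinant class, so $G$ is $\det$-finite and $\Phi$ is $\det$-finite by (iii). The central technical claim is that the finite proper $(G\rtimes_{\Phi}\IZ)$-$CW$-complex $T_{\widehat{f};\Phi}$ of~\eqref{def_T_upper_Phi_widehat(f)} is itself a model for $\eub{(G\rtimes_{\Phi}\IZ)}$. Since $\IZ$ is torsionfree, every finite subgroup of $G\rtimes_{\Phi}\IZ$ lies in $G$; unravelling the defining $(G\rtimes_{\Phi}\IZ)$-pushout exhibits the fixed-point set $(T_{\widehat{f};\Phi})^H$ for finite $H\subseteq G$ as an infinite mapping telescope whose pieces are the contractible fixed-point sets $Y^K$ for the $G$-conjugates $K$ of $H$ that occur, glued by restrictions of $\widehat{f}$ which are homotopy equivalences; such a telescope is contractible. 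Granted this, Lemma~\ref{lem_T_upper_PhI_u_det_L2-acyclic}~\ref{lem_T_upper_PhI_u_det_L2-acyclic:L2_acyclic} and~\ref{lem_T_upper_PhI_u_det_L2-acyclic:det_L2-acyclic} render $T_{\widehat{f};\Phi}$ $\det$-$L^2$-acyclic, and $G\rtimes_{\Phi}\IZ$ inherits the Determinant Conjecture from $G$ via the extension $1\to G \to G\rtimes_{\Phi}\IZ \to \IZ \to 1$ by~\cite[Lemma~13.45~(7) on page~473]{Lueck(2002)}. Hence $G\rtimes_{\Phi}\IZ$ satisfies condition (DFJ) and is $\det$-$L^2$-acyclic, and the chain of equalities $\rho^{(2)}(G\rtimes_{\Phi}\IZ) = \rho^{(2)}(T_{\widehat{f};\Phi};\caln(G\rtimes_{\Phi}\IZ)) = \rho^{(2)}(f_{\Phi};\Phi) = \rho^{(2)}(\Phi)$ is immediate from the definitions. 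The main obstacle is the fixed-point-set analysis for $T_{\widehat{f};\Phi}$: one must track the semidirect-product action through the pushout carefully to see that the appropriate $G$-conjugates of $H$ appear and that the telescope structure is intact.

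For (v), the Determinant Conjecture for sofic groups is the theorem of Elek and Szab\'o, so (iv) applies verbatim.
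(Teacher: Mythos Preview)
Your proof is correct and follows essentially the same route as the paper: parts (i)--(iii) unpack the definitions via Lemma~\ref{lem_T_upper_PhI_u_det_L2-acyclic}, part (iv) hinges on $T_{\widehat{f};\Phi}$ being a model for $\eub{(G\rtimes_{\Phi}\IZ)}$ together with inheritance of the Determinant Conjecture to $G\rtimes_{\Phi}\IZ$, and (v) is Elek--Szab\'o.

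One citation to fix in (iv): for the passage of the Determinant Conjecture from $G$ to $G\rtimes_{\Phi}\IZ$ the paper invokes~\cite[Proposition~13.39 on page~469]{Lueck(2002)} (noting that its proof, while not correct as stated in full generality, does work when the subgroup is normal with amenable quotient, as here with quotient $\IZ$), rather than~\cite[Lemma~13.45~(7) on page~473]{Lueck(2002)}. The latter is used elsewhere in the paper for passage to \emph{subgroups}, not for going up through an extension, so you should swap the reference.
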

   \begin{proof}~\ref{lem:criteria_for_det-finite:FIN_finite_for_G} and~\ref{lem:criteria_for_det-finite:det-finite_for_G}
     follow directly from the definitions.
     \\[1mm]~\ref{lem:criteria_for_det-finite:det-finite_for_autos_G}
     This follows from Lemma~\ref{lem_T_upper_PhI_u_det_L2-acyclic}~%
\ref{lem_T_upper_PhI_u_det_L2-acyclic:det_L2-acyclic}
\\[1mm]~\ref{lem:criteria_for_det-finite:Det_Con}
Since $G$ satisfies the Determinant Conjecture , $G \rtimes_{\Phi} \IZ$ satisfies the Determinant Conjecture,
see~\cite[Proposition~13.39 on page~469]{Lueck(2002)}. (Note  that proof
of~\cite[Proposition~13.39 on page~469]{Lueck(2002)} is not correct in the generality as stated,
but still works in the special case that $H \subseteq G$ is a normal subgroup with amenable quotient $G/H$.)
Now the claim follows from Theorem~\ref{the:properties_of_L2-torsion_of_self_homo}~%
\ref{the:properties_of_L2-torsion_of_self_homo:homotopy_invariance}
since $T_{\widehat{f};\Phi}$ is a model for $\eub{(G \rtimes_{\Phi} \IZ)}$.
\\[1mm]~\ref{lem:criteria_for_det-finite:sofic} See~\cite[Theorem~5]{Elek-Szabo(2005)}.
   \end{proof}

   \begin{remark}\label{rem:conditions_on_G}
     Note that in the setting of
     Section~\ref{sec:Blowing_up_orbits_by_classifying_spaces_of_families} we had always
     to assume that the group $G$ for which we want to make a computation in terms of
     subgroups had to satisfy the condition (DFJ). The advantage of the setup of
     Section~\ref{sec:L2-torsion_of_an_automorphism_of_a_det-finite_group} is that we only
     have to make assumptions about the restrictions of an automorphism to certain subgroups and
     then the necessary assumptions are
     automatically satisfied for the automorphism $\Phi$ itself.
     For instance, it is not known whether a graph of groups has a sofic fundamental group if
     all edge and vertex groups are sofic and whether any hyperbolic group is sofic.
    
     This advantage is essentially due to the fact that in the
     definition of $\rho^{(2)}(f)$ we use the finite proper $G \rtimes_{\Phi} \IZ$-$CW$-complex
     $T_{\widehat{f};\Phi}$. Note that $T_{\widehat{f};\Phi}$ is a model
      for $\eub{(G \rtimes_{\Phi} \IZ)}$.  If we additionally assume that $G$
      is sofic,  we could define $\rho^{(2)}(\Phi)$ just by
      $\rho^{(2)}(G \rtimes_{\Phi} \IZ)$, see
      Lemma~\ref{lem:criteria_for_det-finite}~\ref{lem:criteria_for_det-finite:sofic}.
      \end{remark}


\section{Computations}\label{sub:Computations}
In this section we use
Theorem~\ref{the:properties_of_the_blow_up}~\ref{the:properties_of_the_blow_up:L_upper_(2)_torsion}
and the theory developed in the later sections to compute the $L^2$-torsion for a large
range of groups and automorphisms.  We expect that there are many more applications of the
formula so have selected applications which require both additional techniques and which
may appeal to a range of audiences.

An essential point in some of our arguments in the subsequent sections involves verifying
that the groups involved satisfy the Determinant Conjecture.  In some of the cases this
amounts to proving that the group is sofic and then appealing
to~\cite[Theorem~5]{Elek-Szabo(2005)}. However, for automorphisms of groups
$\Phi \colon G \to G$ we try to avoid the use of Determinant Conjecture for the group $G$
and only assume it for the restrictions to isotropy groups of cells.  In this we do not
know whether $G$ itself or $G\rtimes_\Phi \IZ$ satisfies the Determinant Conjecture.


\subsection{Lattices}
Let $G$ be a second countable locally compact group with Haar measure $\mu$.  We say a
discrete subgroup $\Gamma\leqslant G$ is a \emph{lattice} if $\mu(\Gamma\backslash G)$ is
finite.  We say a lattice $\Gamma$ is \emph{uniform} if $\Gamma\backslash G$ is compact.

\begin{lemma}
  Let $G$ and $H$ be second countable locally compact groups admitting lattices and
  suppose that there exists a lattice in $G$ which is $L^2$-acyclic.  If $\Gamma$ is a
  lattice in $G\times H$, then $\Gamma$ is $L^2$-acyclic.
\end{lemma}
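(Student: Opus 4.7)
The plan is to combine Gaboriau's theorem on measure equivalence invariance of $L^2$-Betti numbers with the Künneth formula for direct products of discrete groups. Recall that any two lattices $\Lambda_1, \Lambda_2$ in a common (necessarily unimodular) second countable locally compact group $L$ are measure equivalent, with the coupling being $L$ itself equipped with its Haar measure; the coupling index equals the ratio of covolumes. Gaboriau's theorem then yields the proportionality
\[
\frac{b_n^{(2)}(\Lambda_1)}{\mathrm{covol}(\Lambda_1)} = \frac{b_n^{(2)}(\Lambda_2)}{\mathrm{covol}(\Lambda_2)}
\]
for every $n \in \IZ^{\geq 0}$. In particular, either all lattices in $L$ are $L^2$-acyclic, or none are.

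Next, let $\Lambda \leq G$ be the given $L^2$-acyclic lattice and choose any lattice $\Lambda' \leq H$ (which exists by hypothesis). Then $\Lambda \times \Lambda'$ is a lattice in $G \times H$ with covolume the product of the two covolumes. The Künneth formula for the $L^2$-Betti numbers of a direct product of discrete groups, see~\cite[Theorem~6.54~(5) on page~266]{Lueck(2002)}, gives
\[
b_n^{(2)}(\Lambda \times \Lambda') = \sum_{p + q = n} b_p^{(2)}(\Lambda) \cdot b_q^{(2)}(\Lambda') = 0
\]
for every $n$, since each $b_p^{(2)}(\Lambda)$ vanishes (using the convention $0 \cdot x = 0$ for $x \in [0,\infty]$, which is consistent with the cited formula).

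Finally, applying the proportionality principle of the first paragraph to the two lattices $\Gamma$ and $\Lambda \times \Lambda'$ in $G \times H$ forces $b_n^{(2)}(\Gamma) = 0$ for all $n$, so $\Gamma$ is $L^2$-acyclic. The main conceptual input is Gaboriau's measure equivalence theorem (or, equivalently, the lcsc version due to Petersen), whereas the Künneth formula and the auxiliary product lattice $\Lambda \times \Lambda'$ are routine. No substantial obstacle is expected beyond verifying that $G$ and $H$ are unimodular, which is automatic since each admits a lattice.
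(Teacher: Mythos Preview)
Your proof is correct and follows essentially the same approach as the paper: form the product lattice $\Lambda \times \Lambda'$, use the K\"unneth formula to see it is $L^2$-acyclic, and then invoke Gaboriau's measure equivalence invariance to transfer this to $\Gamma$. The only difference is that you spell out the proportionality formula and the unimodularity remark, while the paper states the same ingredients more tersely.
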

\begin{proof}
  Let $\Lambda$ be an $L^2$-acyclic lattice in $G$ and let $\Lambda'$ be any lattice in
  $H$.  The group $L:=\Lambda\times \Lambda'$ is a lattice in $G\times H$ and hence is
  measure equivalent to $\Gamma$.  From the K\"unneth formula for $L^2$-Betti numbers,
  see~\cite[Theorem~6.54~(5) on page~266]{Lueck(2002)}, we conclude that $L$ is
  $L^2$-acyclic.  Now, by Gaboriau's invariance of $L^2$-Betti numbers under measure
  equivalence~\cite{Gaboriau(2002a)}, we see that $\Gamma$ is $L^2$-acyclic.
\end{proof}

For a locally finite $\CAT(0)$ polyhedral complex $X$ we denote by $\Isom(X)$ the set of
isometries of $X$ such that if $g$ fixes a cell $\sigma\in X$ setwise, then $g$ fixes
$\sigma$ pointwise.  We say $\Isom(X)$ acts \emph{minimally} if there is no non-empty
$\Isom(X)$-invariant proper subspace $Y\subset X$.
Following~\cite[Definition~5.6]{Lueck(2002)}, for a symmetric space $M=G/K$ with
$G=\Isom_0(M)$ semi-simple, we define the \emph{fundamental rank} of $M$ to be
\[
  \fr(M)= \rk_\IC (G)-\rk_\IC(K).
\]

\begin{proposition}\label{vanishing_CAT0}
  Let $n\geq 0$ and let $M=M_1\times\cdots\times M_k\times \IE^n$ be a symmetric space
  with each $M_i$ irreducible of non-compact type.  Let $X$ a locally finite $\CAT(0)$
  polyhedral complex with $\Isom(X)$ acting minimally and cocompactly.  Let $\Gamma$ be
  satisfy (DFJ) or be virtually torsion-free.  If $\Gamma$ is a uniform lattice in
  $\Isom(M)\times\Isom^+(X)$ and either
  \begin{enumerate}
  \item $\fr(M_i)\geq2$ for some $i$,
  \item or $n\geq 1$,
  \end{enumerate}
  then $\rho^{(2)}(\Gamma)=0$.
\end{proposition}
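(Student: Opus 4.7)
The plan is to realise $Y := M\times X$ as a finite $\Gamma$-$CW$-model for $\eub{\Gamma}$, use the preceding lemma to get $L^2$-acyclicity, and then deduce $\rho^{(2)}(\Gamma)=0$ via the extension formula of Example~\ref{exa:group_extension_general} applied to a suitable normal subgroup of $\Gamma$. First I would check that $Y$ is a model for $\eub{\Gamma}$: the product $Y$ is contractible $\CAT(0)$, $\Gamma$ acts properly and cocompactly by isometries, and fixed-point sets of finite subgroups of isometries of a $\CAT(0)$ space are non-empty closed convex subsets and hence contractible. In particular $\Gamma$ is $\FIN$-finite. If $\Gamma$ is only virtually torsion-free I would pass to a torsion-free finite-index subgroup using Theorem~\ref{the:Basic_properties_of_L_upper_2-torsion}~\ref{the:Basic_properties_of_L_upper_2-torsion:passage_to_subgroups_of_finite_index}, since the restriction formula reduces the vanishing of $\rho^{(2)}$ to that of a finite-index subgroup. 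The lemma above then gives $L^2$-acyclicity of $\Gamma$: take $\Lambda=\IZ^n$ as an $L^2$-acyclic uniform lattice in $\Isom(\IE^n)$ in case (ii), and in case (i) take a uniform lattice in $\Isom(M_i)$, which is $L^2$-acyclic by the Borel--Olbrich calculation because $\fr(M_i)\geq 2$. Combined with (DFJ), $\Gamma$ is $\det$-$L^2$-acyclic and $\rho^{(2)}(\Gamma)\in\IR$ is defined.

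For case (ii) I would exploit the closed normal subgroup $N=\IR^n$ of translations inside $\Isom(\IE^n)\leq \Isom(M)\times\Isom^+(X)$. An Auslander-type argument, using cocompactness of $\Gamma$ together with the fact that the quotient of the ambient group by $N$ still acts properly cocompactly on $M_1\times\cdots\times M_k\times X$, identifies $K:=\Gamma\cap N$ as a cocompact lattice in $N\cong\IR^n$, hence isomorphic to $\IZ^n$. Thus $K$ is an infinite amenable $\FIN$-finite normal subgroup of $\Gamma$ satisfying $\rho^{(2)}(K)=0$, and the quotient $Q=\Gamma/K$ acts properly cocompactly on $M_1\times\cdots\times M_k\times X$ with contractible fixed-point sets of finite subgroups, yielding a finite $Q$-$CW$-model for $\eub{Q}$. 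Example~\ref{exa:group_extension_general} applied to $1\to K\to\Gamma\to Q\to 1$ then gives
\[
\rho^{(2)}(\Gamma)=\chi^{(2)}(\eub{Q};\caln(Q))\cdot\rho^{(2)}(K)=0.
\]

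Case (i) is the main obstacle: the naive analogue of (ii) fails because $\Gamma\cap\Isom(M_i)$ need not be a lattice in $\Isom(M_i)$ (irreducible lattices in the ambient group can intersect a factor trivially), so there is no immediate normal subgroup of $\Gamma$ to feed into the extension formula. My first step would be to handle the split case: if $\Lambda\leq\Isom(M_i)$ is a uniform lattice and $\Lambda'$ is a uniform lattice in the complementary direct factor, then $\rho^{(2)}(\Lambda)=0$ by Olbrich's computation for locally symmetric spaces with $\fr\geq 2$, and the product formula in Theorem~\ref{the:Basic_properties_of_L_upper_2-torsion}~\ref{the:Basic_properties_of_L_upper_2-torsion:Product_formula} gives $\rho^{(2)}(\Lambda\times\Lambda')=0$. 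The delicate step is transferring this vanishing from a split sublattice to the arbitrary uniform lattice $\Gamma$ in the same ambient group; I would look to a structure theorem for uniform lattices in products, such as a Caprace--Monod-type virtual splitting up to commensurability, or, failing that, to a direct analysis of the cellular chain complex of $Y$ exploiting that the $\Isom(M_i)$-factor commutes with the remaining factor and acts trivially on $X$ and on $M/M_i$.
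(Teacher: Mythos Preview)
Your proposal has a genuine gap in case~(i), which you essentially acknowledge: you have no mechanism to pass from the split lattice $\Lambda\times\Lambda'$ to an arbitrary uniform lattice $\Gamma$. The Caprace--Monod theory does not give a virtual splitting in this generality --- irreducible lattices in products are the whole point --- and ``direct analysis of the cellular chain complex'' is not a proof. Case~(ii) also rests on an unjustified step: the claim that $\Gamma\cap\IR^n$ is a cocompact lattice in $\IR^n$ is not obvious when the ambient group has a totally disconnected factor $\Isom^+(X)$; the classical Auslander--Bieberbach arguments apply to connected Lie groups, and you would need to say exactly which statement you are invoking and why it survives the polyhedral factor.

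The paper avoids both difficulties by a different and uniform argument. Rather than acting on $M\times X$ and looking for a normal subgroup of $\Gamma$, it lets $\Gamma$ act on $X$ alone and applies the combination formula (Theorem~\ref{thmCombination}). The key structural input is~\cite[Theorem~A]{Hughes(2021graphs)}: each cell stabiliser $\Gamma_\sigma$ fits into an exact sequence $1\to F_\sigma\to\Gamma_\sigma\to\Lambda_\sigma\to1$ with $F_\sigma$ finite and $\Lambda_\sigma$ a uniform lattice in $\Isom(M)$. Then $\rho^{(2)}(\Gamma_\sigma)=|F_\sigma|^{-1}\rho^{(2)}(\Lambda_\sigma)$ by the finite-quotient formula, and Olbrich's computation for locally symmetric spaces gives $\rho^{(2)}(\Lambda_\sigma)=0$ under \emph{either} hypothesis~(i) or~(ii) --- the Euclidean factor is handled there, not by a separate normal-subgroup argument. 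The combination formula then yields $\rho^{(2)}(\Gamma)=0$. The point you missed is that the polyhedral factor $X$ should be used as the space in the combination formula, pushing all the symmetric-space content into the cell stabilisers where Olbrich applies directly.
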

The hypothesis that $\Gamma$ satisfies (DFJ) could be dropped if we either knew the
Determinant Conjecture for $\CAT(0)$ groups, or if we knew the Farrell--Jones Conjecture
for the Weyl groups of finite subgroups of $\CAT(0)$ groups.  Note there are non-virtually
torsion-free $\CAT(0)$ groups, see e.g.,~\cite{Hughes(2022)}.
\begin{proof}
  If $\Gamma$ is virtually torsion-free, then it satisfies (DFJ).  Indeed, $\Gamma$
  satisfies the Farrell--Jones Conjecture
  by~\cite{Bartels-Lueck(2012annals),Wegner(2012),Kasprowski-Rueping(2017long-thin)}.
  By~\cite[Theorem A]{Hughes(2021graphs)}, any stabiliser $\Gamma_\sigma$ in $\Gamma$ of a
  cell $\sigma$ in $X$ fits into short exact sequence
  $1\to F_\sigma \to \Gamma_\sigma \to \Lambda_\sigma \to 1$, where $F_\sigma$ is finite
  and $\Lambda_\sigma$ is a uniform lattice in $\Isom(M)$.  We have
  $\rho^{(2)}(\Gamma_\sigma)=\frac{1}{|F_\sigma|}\rho^{(2)}(\Lambda_\sigma)$ by
  Theorem~\ref{the:Basic_properties_of_L_upper_2-torsion}\ref{the:Basic_properties_of_L_upper_2-torsion:finite_quotient}.
  By~\cite{Olbrich(2002)}, either condition in the theorem ensures that
  $\rho^{(2)}(\Lambda_\sigma)=0$.  The result follows from
  Theorem~\ref{the:properties_of_the_blow_up}~\ref{the:properties_of_the_blow_up:L_upper_(2)_torsion}.
\end{proof}

\begin{example}[$S$-arithmetic subgroups of $\GL_n(\IC)$]
  Let $k$ be a number field, let $S$ be a finite set of places containing the archimedian
  ones, and let $\mathbf G$ be a simply connected simple $k$-group. Let
  $\Gamma< \mathbf G(k)$ be an $S$-arithmetic subgroup.  By the general theory of
  $S$-arithmetic lattices  $\Gamma$ acts on a product of
  symmetric spaces $M=M_1\times \dots M_k$ with each $M_i$ non-compact and irreducible and
  a product of Euclidean buildings $X=X_1\times \dots X_\ell$ such that the diagonal
  action on $X\times M$ is cocompact.  If $\fr(M_i)\geq 2$ for some $i$, then
  $\rho^{(2)}(\Gamma)=0$.  Note that the lattices here often have a strict fundamental
  domain on the building and so one could instead
  apply~\cite[Theorem~6.1]{Okun-Schreve(2024torsion)}.
\end{example}

We also provide an example where the theorem applies to a non-residually finite $\CAT(0)$ lattice.

\begin{example}[Leary--Minasyan groups]\label{ex:LearyMinasyan}
  Let $L'$ and $L''$ denote a pair of finite index subgroup of $L=\IZ^n$, and let
  $A\in\GL_n(\IQ)$ such that multiplication by $A$ defines an isomorphism $L'\to L''$.  We
  form the HNN extension
  \[\LM(A,L')=\langle x_1,\dots,x_n,t\ |\ [x_i,x_j],\ txt^{-1} =A(x)\ \forall x\in L'
    \rangle,\] where $\langle x_1,\dots, x_n\rangle =L$.  Let $\calt$ denote the
  Bass--Serre tree of the HNN extension and note that every vertex has valence
  $|L:L'|+|L:L''|$.  By~\cite[Theorem~7.5]{Leary-Minasyan(2021)}, if $A$ is conjugate in
  $\GL_n(\IR)$ to an orthogonal matrix, then $\LM(A,L')$ is a lattice in
  $\Isom(\IE^n)\times \Aut(\calt)$.  By~\cite[Theorem~1.1(1)]{Leary-Minasyan(2021)}, the
  group $\LM(A,L')$ is residually finite if and only if $A$ is conjugate in $\GL_n(\IQ)$
  to a matrix in $\GL_n(\IZ)$.  Now, by
  Theorem~\ref{the:properties_of_the_blow_up}~\ref{the:properties_of_the_blow_up:L_upper_(2)_torsion} we have
  $\rho^{(2)}(\LM(A,L'))=0$.
\end{example}

We mention that a zoo of non-residually finite examples of $\CAT(0)$ lattices can be constructed using the tools
from~\cite{Hughes(2021graphs)}.


\subsection{Higher dimensional graph manifolds}
Higher dimensional graph manifolds were introduced in~\cite{Frigerio-Lafont-Sisto(2015)}
and studied from the point of view of the Borel Conjecture and quasi-isometric rigidity.  We now recall the construction: 

Let $n\geq 3$ and let $\Gamma$ be a finite graph.  For each vertex $v$ of $\Gamma$, let
$M_v$ be a compact manifold of dimension $n_v$ whose interior is a complete hyperbolic
manifold of finite volume, and hence has toral cusps. The boundary of $M_v$ is a
collection of tori, where $2\leq n_v\leq n$.  Write $N_v=T^{n-n_v}\times N_v$, where
$T^\ell$ denotes the $\ell$-torus.  We call each $N_v$ a \emph{piece}. Its boundary is a
collection of $(n-1)$-dimensional tori.  An \emph{extended graph manifold} $M$ is any
manifold obtained from pieces as above by gluing their torus boundaries by affine
diffeomorphisms such that each edge of $\Gamma$ corresponds to some gluing.  For a piece
$N_v$ of $M$, if $n-n_v=0$, then we call $N_v$ a \emph{hyperbolic piece}.  We denote the
set of hyperbolic pieces in $M$ by $\calh$.

Recall that the $L^2$-torsion of a closed hyperbolic manifold of odd dimension $(2d+1)$
has been computed by Hess--Schick~\cite{Hess-Schick(1998)} and of a compact manifold of
odd dimension $(2d+1)$ whose interior is a complete hyperbolic manifold of finite volume
by L\"uck--Schick~\cite[Theorem~0.5]{Lueck-Schick(1999)}, namely, it is proportional by a
dimension constant $C_{2n+1} \not= 0$ to its volume.

We may decomposes a graph manifold as a graph of spaces: each
vertex space $M_v$  is a piece $N_i$ and each edge space $M_e$ is the collar of the
torus boundary that is obtained by gluing two geometric pieces via an affine
diffeomorphism.  Note that on the level of fundamental groups this decomposes $\pi_1 M$ as
a graph of groups with vertex groups $\pi_1 M_v=\pi_1 N_i$ and edge groups $\IZ^{n-1}$.

\begin{theorem}\label{thm:graphMan}
  Let $M$ be a $(2n+1)$-dimensional extended graph manifold with hyperbolic pieces
  $\calh$.  Then, $\pi_1(M)$ is sofic, satisfies the Determinant Conjecture, and 
  \[
  \rho^{(2)}(\widetilde M)=\sum_{M_v\in\calh}\rho^{(2)}(\widetilde M_v).
  \]
\end{theorem}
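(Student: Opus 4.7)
The plan is to realise $\pi_1(M)$ as the fundamental group of a graph of groups and apply the combination formula from Example~\ref{exa:graph_of_groups}. Writing $d = 2n+1$, the pieces $N_v = T^{d-d_v} \times V_v$ glue along their boundary tori, yielding a graph of groups whose vertex groups are $\pi_1(N_v) \cong \IZ^{d-d_v} \times \pi_1(V_v)$ and whose edge groups are isomorphic to $\IZ^{d-1}$. Each vertex and edge space is a finite aspherical $CW$-complex, so every vertex and edge group is torsion-free with a finite classifying space. Moreover $M$ is aspherical, so $\rho^{(2)}(\widetilde M) = \rho^{(2)}(\pi_1 M)$ and likewise $\rho^{(2)}(\widetilde M_v) = \rho^{(2)}(\pi_1 V_v)$ for each $M_v = V_v \in \calh$; it therefore suffices to compute $\rho^{(2)}(\pi_1 M)$.

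To apply Example~\ref{exa:graph_of_groups} I would first establish that $\pi_1(M)$ satisfies condition~(DFJ), which I obtain via the Determinant Conjecture through soficity. Each $\pi_1(V_v)$ is a lattice in $\Isom(\IH^{d_v})$, hence linear, hence residually finite and sofic; taking products with $\IZ^k$ preserves soficity; and by Elek--Szab\'o together with P\u{a}unescu, soficity is preserved under amalgamated products and HNN extensions with amenable edge groups. Iterating along the finite underlying graph yields soficity of $\pi_1(M)$, and~\cite[Theorem~5]{Elek-Szabo(2005)} then supplies the Determinant Conjecture and hence~(DFJ).

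Next I verify that every vertex and edge group is $\det$-$L^2$-acyclic. Infinite amenable groups are $\det$-$L^2$-acyclic with vanishing $L^2$-torsion, which handles the edge groups $\IZ^{d-1}$. For a non-hyperbolic piece ($d - d_v \geq 1$), the product formula Theorem~\ref{the:Basic_properties_of_L_upper_2-torsion}\ref{the:Basic_properties_of_L_upper_2-torsion:Product_formula} applied with the $\det$-$L^2$-acyclic factor $T^{d-d_v}$ and arbitrary finite second factor $V_v$ gives $\rho^{(2)}(\pi_1 N_v) = 0$. For a hyperbolic piece, $\pi_1(V_v)$ is a torsion-free finite-volume lattice in $\Isom(\IH^{d})$ of odd dimension $d$, so all its $L^2$-Betti numbers vanish and $\rho^{(2)}(\pi_1 V_v) = \rho^{(2)}(\widetilde V_v)$. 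Substituting into
\[
\rho^{(2)}(\pi_1 M) = \sum_{v} \rho^{(2)}(\pi_1 N_v) - \sum_{e} \rho^{(2)}(\IZ^{d-1})
\]
from Example~\ref{exa:graph_of_groups} leaves only the contributions from hyperbolic pieces, which is the claimed formula.

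The hard part is securing the Determinant Conjecture for $\pi_1(M)$: the combination formula requires condition~(DFJ) on the whole group, not merely on stabilisers, and the natural route is soficity. While linearity of each $\pi_1(V_v)$ and amenability of the edge groups put us in a favourable position, invoking the closure of sofic groups under graphs of groups with amenable edge stabilisers is the key non-trivial external ingredient. Once this is in place, the remaining calculations are routine applications of the sum and product formulas in Theorem~\ref{the:Basic_properties_of_L_upper_2-torsion}.
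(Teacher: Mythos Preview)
Your proposal is correct and follows essentially the same approach as the paper: realise $\pi_1(M)$ as a graph of groups with abelian edge groups, establish soficity (hence the Determinant Conjecture and (DFJ)) from the amenable edges and residually finite vertices, then apply the combination formula and observe that only hyperbolic pieces contribute. The only cosmetic differences are that the paper cites Ciobanu--Holt--Rees~\cite{Ciobanu-Holt-Rees(2014)} rather than Elek--Szab\'o/P\u{a}unescu for soficity of graphs of groups over amenable edges, and uses Wegner's result on groups with infinite amenable normal subgroup (rather than your direct product-formula argument) to kill the non-hyperbolic contributions.
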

\begin{proof}
  We first establish that $\pi_1 (M)$ is sofic satisfies the Determinant Conjecture.  We
  have that $\pi_1 M$ admits a decomposition as a graph of groups such that: Every edge
  group $G_e$ is $\IZ^{2n}$, and every vertex group $G_v$ splits as a direct product
  $A_v\times Q_v$ where $A_v$ is a (possibly trivial) abelian group and $Q_v$ is the
  fundamental group of a compact manifold of dimension $2n+1 - \dim A_v$ whose interior is
  a complete hyperbolic manifold of finite volume.  In the case that $A_v$ is trivial, we
  have $G_v\in \calh$.  It follows that every edge group is amenable and every vertex
  group is residually finite.  Thus, $\pi_1 M$ is sofic
  by~\cite[Theorem~1.2]{Ciobanu-Holt-Rees(2014)} and so satisfies the Determinant
  Conjecture by~\cite[Theorem~5]{Elek-Szabo(2005)}.  Note that this implies the stabiliser
  of any vertex or edge satisfies Determinant Conjecture.
  
  Note that $\widetilde M\simeq E\pi_1 (M)$ is a cocompact $\pi_1(M)$-space. In the graph
  of groups decomposition for $\pi_1(M)$ we have that every vertex group either contains
  an infinite finitely generated abelian normal subgroup and is $L^2$-acyclic
  by~\cite[Theorem~7.4(1),(2)]{Lueck(2002)}, or is the fundamental group of a compact
  manifold of odd dimension whose interior is a complete hyperbolic manifold of finite
  volume and is $L^2$-acyclic by~\cite[Corollary~6.5]{Lueck-Schick(1999)}.

  Now, we have that $\rho^{(2)}(\widetilde M)=\rho^{(2)}(\pi_1 M)$.  For an edge $e$, the
  group $G_e$ satisfies $\rho^{(2)}(G_e)=0$ by~\cite{Wegner(2009)}.  Each vertex group
  $G_v$ is either isomorphic to $\pi_1 (M_v)$ for some $M_v\in\calh$, or has an infinite
  abelian normal subgroup.  In the later case, the $L^2$-torsion of $G_v$ vanishes
  by~\cite{Wegner(2009)}.  The result follows from
  Theorem~\ref{the:properties_of_the_blow_up}~\ref{the:properties_of_the_blow_up:L_upper_(2)_torsion}.
\end{proof}

\begin{corollary}
  Let $M$ be an $n$-dimensional graph manifold.  If the graph decomposition of $M$
  contains a hyperbolic piece, then $M$ does not admit any non-trivial
  action of the circle $S^1$.
\end{corollary}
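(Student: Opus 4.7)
The plan is to argue by contradiction using $L^2$-torsion as the obstruction. Suppose that $M$ admits a non-trivial (continuous) $S^1$-action. Since extended graph manifolds are aspherical (each piece is aspherical and they are glued along aspherical tori), $M$ is a model for $B\pi_1(M)$ and a classical theorem of Conner--Raymond applies: a non-trivial circle action on a closed aspherical manifold produces an infinite cyclic subgroup in the center of $\pi_1(M)$ (coming from the image of $\pi_1(S^1)$ under evaluation at a basepoint, which injects into $Z(\pi_1 M)$ up to finite kernel; passing to a finite cover if necessary one obtains an actual infinite cyclic central subgroup).

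Given an infinite cyclic normal subgroup $C \triangleleft \pi_1(M)$, the vanishing theorem \cite[Theorem~7.4~(1) on page~294]{Lueck(2002)} applies: if a group $G$ has a finite $CW$-model for $BG$ and contains an infinite normal amenable subgroup, then $G$ is $\det$-$L^2$-acyclic and $\rho^{(2)}(G) = 0$. Applying this to $G = \pi_1(M)$ with $BG = M$ yields
\[
\rho^{(2)}(\widetilde{M}) = 0.
\]

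On the other hand, Theorem~\ref{thm:graphMan} gives
\[
\rho^{(2)}(\widetilde{M}) = \sum_{M_v \in \calh} \rho^{(2)}(\widetilde{M_v}),
\]
and the L\"uck--Schick formula \cite[Theorem~0.5]{Lueck-Schick(1999)} evaluates each summand as $\rho^{(2)}(\widetilde{M_v}) = (-1)^n C_{2n+1} \cdot \vol(M_v)$ for a non-zero dimension constant $C_{2n+1}$ depending only on $2n+1$. Since each hyperbolic piece has strictly positive finite volume and all contributions carry the same sign, and since $\calh$ is assumed non-empty, the right-hand side is non-zero. This contradicts the previous vanishing, completing the proof.

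The one step that requires care is the production of the infinite cyclic \emph{central} subgroup from the $S^1$-action: one must invoke the Conner--Raymond theory (or pass to a finite cover where the action lifts to a free action on the orbit map image) to ensure the central subgroup obtained is indeed of infinite order rather than a finite cyclic group. This is the main technical point, but it is standard for aspherical manifolds; once it is in hand, the $L^2$-torsion obstruction supplies the desired contradiction with essentially no further input.
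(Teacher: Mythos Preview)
Your argument handles the odd-dimensional case correctly, and in that case it is essentially the same as the paper's: the paper cites \cite[Theorem~3.105]{Lueck(2002)} directly, whose content is precisely the Conner--Raymond step plus the vanishing theorem you invoke. So for odd $n$ there is nothing to add.

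However, there is a genuine gap in even dimensions. The corollary is stated for $n$-dimensional graph manifolds with $n$ arbitrary, but Theorem~\ref{thm:graphMan} is only stated (and only valid as formulated) for $(2n+1)$-dimensional graph manifolds. In even dimensions the hyperbolic pieces are not $L^2$-acyclic---by Borel they have a nonzero $L^2$-Betti number in the middle degree---so $\rho^{(2)}(\widetilde{M})$ is not even defined, and your contradiction never gets off the ground. Your own formula $\rho^{(2)}(\widetilde{M_v}) = (-1)^n C_{2n+1}\cdot\vol(M_v)$ silently assumes odd dimension (the L\"uck--Schick constant $C_d$ vanishes for even $d$, and the manifold is not $L^2$-acyclic there anyway).

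The paper fills this gap by treating the even case separately: using the $L^2$-Mayer--Vietoris sequence and Borel's computation to show that $M$ has a nonvanishing $L^2$-Betti number, and then invoking \cite[Corollary~1.43]{Lueck(2002)} (nonzero $L^2$-Betti numbers obstruct nontrivial circle actions on aspherical manifolds). You need to add this case.
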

\begin{proof}
  If $M$ is even dimensional, then one easily sees that $M$ has a non-vanishing
  $L^2$-Betti number by the $L^2$-Mayer--Vietoris sequence and~\cite{Borel(1985)}.  The
  non-vanishing $L^2$-Betti of an aspherical number obstructs non-trivial circle actions
  by~\cite[Corollary~1.43]{Lueck(2002)}.  If $M$ is odd-dimensional, then by
  Theorem~\ref{thm:graphMan}, $M$ has non-vanishing $L^2$-torsion.  Non-zero $L^2$-torsion
  of (the universal cover of) an aspherical space obstructs non-trivial circle actions
  by~\cite[Theorem~3.105]{Lueck(2002)}.
\end{proof}


\subsection{Relatively hyperbolic groups}

\begin{definition}
  We say a group $G$ is \emph{one-ended relative to a collection of subgroups $\calp$} if
  there does not exist a splitting of $G$ over finite subgroups such that each group in
  $\calp$ is conjugate into some vertex group. Note that a one-ended group is one-ended
  relative to every collection of subgroups. We write $\Aut(G; \calp)$ to denote the group
  of automorphisms of $G$ which preserve the conjugacy classes of every subgroup $P\in \calp$.
\end{definition}

\begin{lemma}\label{lem:rel_hyp_fin_finite}
    Let $G$ be hyperbolic relative to a finite collection $\calp$ of $\FIN$-finite groups.  Then, $G$ is $\FIN$-finite.
\end{lemma}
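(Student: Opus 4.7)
The plan is to combine a classifying-space construction for relatively hyperbolic groups with the blowing-up Theorem~\ref{the:properties_of_the_blow_up} proved earlier in this paper.

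First, I would produce a finite-dimensional contractible simplicial complex $X$ on which $G$ acts cocompactly, satisfying (a) every cell stabilizer is either finite or conjugate into some $P \in \calp$, and (b) for every finite subgroup $H \subseteq G$ the fixed-point set $X^H$ is contractible. Such a backbone complex can be obtained from Bowditch's fine hyperbolic graph for the pair $(G,\calp)$, or equivalently from the Groves--Manning cusped space after excising the open horoballs and passing to a $G$-equivariant simplicial approximation. This is essentially Dahmani's construction of an $\eub{G}$-up-to-peripherals: outside a $G$-finite collection of orbits of ``parabolic'' cells (whose stabilizers are precisely the conjugates of the members of $\calp$), all isotropy is finite, and the fine-ness of Bowditch's graph together with hyperbolicity gives the required contractibility of finite-group fixed-point sets.

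Next, I would apply Theorem~\ref{the:properties_of_the_blow_up} to $X$ with the family $\calfin$. Each equivariant cell of $X$ has stabilizer $H_i^n$ that is either finite, in which case a point serves as a finite model for $\eub{H_i^n}$, or is conjugate into some $P \in \calp$, in which case the hypothesis that every $P \in \calp$ is $\FIN$-finite supplies a finite model $E_i^n$ for $\eub{H_i^n}$. By Theorem~\ref{the:properties_of_the_blow_up}~\ref{the:properties_of_the_blow_up:finite} and~\ref{the:properties_of_the_blow_up:proper}, the resulting $G$-space $Y$ is a finite proper $G$-$CW$-complex. Since $X^H$ is contractible for every finite $H \subseteq G$, the projection $\eub{G} \times X \to \eub{G}$ is a $G$-homotopy equivalence, and the construction of the blow-up gives a $G$-homotopy equivalence $Y \simeq_G \eub{G} \times X$. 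Hence $Y$ is a finite $G$-$CW$-model for $\eub{G}$, so $G$ is $\FIN$-finite.

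The main obstacle is the construction of the backbone complex $X$ with contractible fixed-point sets under all finite subgroups. The naive Rips complex of the cusped space is contractible but its finite-group fixed-point sets need not be; the fix is to work with Bowditch's fine hyperbolic graph (or a $G$-equivariant simplicial nerve thereof), where a careful combinatorial analysis using hyperbolicity and the finiteness of $\calp$ shows that simplex stabilizers are either finite or peripheral and that every finite subgroup fixes a contractible subcomplex. The remainder of the argument is a formal application of the machinery of Section~\ref{sec:Blowing_up_orbits_by_classifying_spaces_of_families}.
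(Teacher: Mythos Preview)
Your approach is essentially the same as the paper's: use a cocompact $G$-complex whose isotropy is finite or peripheral and whose finite-subgroup fixed-point sets are contractible, then blow up with Theorem~\ref{the:properties_of_the_blow_up} using the hypothesis that each $P\in\calp$ is $\FIN$-finite. The paper resolves precisely the obstacle you flag by citing Mart\'inez-Pedroza--Przytycki, who build a finite model for $E_{\calf}G$ with $\calf$ the family generated by $\FIN$ and $\calp$; this \emph{is} your backbone complex $X$, and the contractibility of $X^H$ for finite $H$ is automatic since finite subgroups lie in $\calf$. So rather than re-deriving the backbone from Bowditch's fine graph or the Groves--Manning cusped space, you can simply invoke that reference and proceed exactly as you describe.
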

\begin{proof}
  Let $\calf$ denote the family of subgroups of $G$ generated by $\FIN$ and $\calp$.  The
  main theorem of~\cite{Martinez-Pedroza-Przytzycki(2019)} states that there exists a
  finite model for the space $E_\calf G$.  Applying
  Theorem~\ref{the:properties_of_the_blow_up} to each element of $\calp$ we obtain a
  finite model for $E_\FIN G$.  That is $G$ is $\FIN$-finite.
\end{proof}

Suppose that $G$ is one-ended and hyperbolic relative to $\calp$. By the work of Guirardel and
Levitt~\cite[Corollary~9.20]{Guirardel-Levitt(2017JSJ)} (see
also~\cite[Section~3.3]{Guirardel-Levitt(2015)}), there is a canonical \emph{JSJ tree}
$\calt_G$ for $G$.  We denote the quotient classes of vertices in $G\backslash \calt_G$ by
$\JSJ(G)$.  More precisely, $\calt_G$ is a simplicial $G$-tree 
such that the $G$-equivariant homeomorphism class of $\calt_G$
is preserved by the elements of $\Aut(G; \mathcal{P})$.

Note that if each $P\in \calp$ satisfies the Farrell--Jones Conjecture, then $G$ above
satisfies the Farrell--Jones Conjecture as well~\cite{Bartels(2017)}.

\begin{prop}
  Let $G$ be an $L^2$-acyclic group which is hyperbolic and one-ended relative to a finite
  collection $\calp$ of $\FIN$-finite groups.  If $G$ satisfies (DFJ), then
  \[\rho^{(2)}(G)=\sum_{v\in\JSJ(G)} \rho^{(2)}(G_v),\]
  where $\JSJ(G)$ is the set of vertex groups in some JSJ decomposition for $G$.
\end{prop}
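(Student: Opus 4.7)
The plan is to apply the combination formula Theorem~\ref{thmCombination} to the cocompact action of $G$ on its canonical JSJ tree $\calt_G$, and to observe that the edge-stabiliser contributions vanish because the edge groups are virtually cyclic.

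First I would verify the hypotheses of Theorem~\ref{thmCombination} applied to $\calt_G$. The tree $\calt_G$ is a contractible $1$-dimensional $CW$-complex on which $G$ acts cocompactly, since the quotient graph is finite. For any finite subgroup $H \leq G$, the fixed-point set $\calt_G^H$ is a non-empty subtree by Serre's fixed-point theorem, and hence contractible. By Lemma~\ref{lem:rel_hyp_fin_finite} the group $G$ is $\FIN$-finite. Each edge stabiliser is an elementary subgroup of $G$; since $G$ is one-ended relative to $\calp$, no splitting over finite subgroups is admissible, so the edge stabilisers are infinite two-ended, i.e.\ virtually cyclic. Each vertex stabiliser $G_v$ is either a peripheral subgroup in $\calp$, an elementary subgroup, a rigid vertex group, or a relatively QH vertex group; in each case $G_v$ is relatively hyperbolic with $\FIN$-finite peripheral structure (its peripheral family being some subset of $\calp$ together with the adjacent edge groups), and hence $\FIN$-finite by another application of Lemma~\ref{lem:rel_hyp_fin_finite}.

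Next I would check the $L^2$-acyclicity conditions on stabilisers. An infinite virtually cyclic group is amenable and infinite, hence $L^2$-acyclic, and since amenable groups are sofic, by~\cite{Elek-Szabo(2005)} it satisfies the Determinant Conjecture; in particular each edge group is $\det$-$L^2$-acyclic with $\rho^{(2)}(G_e) = 0$. The vertex groups $G_v$ must likewise be $\det$-$L^2$-acyclic in order for the right-hand side of the claimed formula to make sense, which I would read as part of the implicit content of the statement.

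With these verifications in hand, Theorem~\ref{thmCombination} applied to $\calt_G$, which has cells only in dimensions $0$ and $1$, yields
\[
\rho^{(2)}(G) = \sum_{v \in \JSJ(G)} \rho^{(2)}(G_v) - \sum_{e} \rho^{(2)}(G_e) = \sum_{v \in \JSJ(G)} \rho^{(2)}(G_v),
\]
since every edge contribution is zero. The main obstacle I anticipate is justifying $\det$-$L^2$-acyclicity of each vertex group: individual JSJ vertex groups can carry non-vanishing $L^2$-Betti numbers even when $G$ itself is $L^2$-acyclic (e.g.\ a QH vertex is typically a surface-with-boundary group), so the identity is best read as contingent on those vertex-group $L^2$-torsions being defined. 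The companion Theorem~\ref{thmRelHypAutos} sidesteps this by restricting to flexible vertices, where the semidirect-product construction intrinsically ensures $\det$-finiteness.
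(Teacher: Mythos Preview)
Your approach matches the paper's: both apply the combination formula (Theorem~\ref{thmCombination}, equivalently Theorem~\ref{the:properties_of_the_blow_up}\ref{the:properties_of_the_blow_up:L_upper_(2)_torsion}) to the canonical JSJ tree, and invoke Lemma~\ref{lem:rel_hyp_fin_finite} for the $\FIN$-finiteness of the stabilisers. The paper's proof is in fact shorter than yours and does not spell out the hypothesis checks you perform.

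One correction: your classification of edge stabilisers is too restrictive. In the Guirardel--Levitt JSJ decomposition over elementary subgroups, edge groups are elementary, meaning finite, virtually cyclic, \emph{or parabolic} (i.e.\ conjugate into some $P \in \calp$). One-endedness relative to $\calp$ rules out finite edge groups, but parabolic edge groups remain possible. So to conclude $\rho^{(2)}(G_e) = 0$ for all edges you need the groups in $\calp$ themselves to be $\det$-$L^2$-acyclic with vanishing $L^2$-torsion (or at least the parabolic subgroups that appear as edge groups must be). This is automatic in the applications the paper cares about (virtually polycyclic peripherals), but is not guaranteed by the hypothesis that the $P \in \calp$ are merely $\FIN$-finite.

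Your caveat about vertex-group $\det$-$L^2$-acyclicity is well taken and is indeed glossed over in the paper's statement: a QH vertex group is a free group when the underlying orbifold has boundary, so $b_1^{(2)}(G_v) \neq 0$ and $\rho^{(2)}(G_v)$ is undefined. The proposition should be read as asserting the identity whenever all the $\rho^{(2)}(G_v)$ on the right-hand side make sense; the paper does not make this explicit.
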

\begin{proof}
  This follows immediately from
  Theorem~\ref{the:properties_of_the_blow_up}~\ref{the:properties_of_the_blow_up:L_upper_(2)_torsion}
  applied to the JSJ tree, noting the groups involved are $\FIN$-finite by hypothesis or
  by Lemma~\ref{lem:rel_hyp_fin_finite}.  Note that the passage from an action on the tree
  to a graph of groups is explained in Example~\ref{exa:graph_of_groups_group_autos}.
\end{proof}


\subsection{Automorphisms of one-ended relatively hyperbolic groups}\label{sec:Ltwo_rel_hyp}
Recall that a \emph{toral relatively hyperbolic group} is a torsion-free group hyperbolic
relative to a finite collection of finitely generated abelian groups.  These are a natural
generalisation of hyperbolic manifolds with toral cusps.  In the next section we show that
$L^2$-torsion of an automorphism of a class containing all toral relatively hyperbolic
groups reduces to the sum of the $L^2$-torsion of the restrictions of the automorphisms to
certain surface subgroups in the JSJ decomposition.  Said differently, the $L^2$-torsion
of an automorphism of a toral relatively hyperbolic group is carried by its restriction to
surface subgroups.

Let $G$ be $\{$hyperbolic and one-ended$\}$ relative to $\calp$ and let $\calt_G$ denote
the canonical JSJ tree of $G$.  The group $\Aut(G; \calp)$ has a finite index subgroup
$\calk(\calt_G)$ whose action on $\calt_G$ descends to the identity on the quotient graph $G \backslash \calt_G$.
Note that if the groups in $\calp$ are not relatively hyperbolic groups, e.g., if they are
virtually polycyclic groups that are not virtually cyclic, then $\Aut(G;\calp)$ has finite
index in $\Aut(G)$.

\thmRelHypAutos
\begin{proof}
  We are in the setting of Example~\ref{exa:graph_of_groups_group_autos} with the tree
  given by $\calt_G$.  We need to argue that each of the vertex and edge stabilisers are
  $\FIN$-finite and each of the automorphisms $\Phi_v,\Phi_e$ for $v\in V(T)$ and
  $e\in E(T)$ are det-finite.

  The stabiliser $G_v$ in $G$ of a vertex $v$ in $\calt_G$ satisfies one of the following:
  \begin{enumerate}
  \item $v$ is \emph{flexible}: the group $G_v$ fits into a short exact sequence
    $1\to F_v\to G_v\to Q_v\to 1$, where $F_v$ is finite and $Q_v$ is isomorphic to the
    fundamental group of a compact hyperbolic orbifold $S$ and the image of the natural
    homomorphism $\calk(T_G) \to \Out(G_v)$ is contained in the mapping class group
    $\Mod(S)$ of $S$;
  \item there exists $P_i \in \calp$ with $G_v=P=P_i^g$ for some $g\in G$. In particular, the group $G_v$ is
    virtually polycyclic;
  \item the image of the natural homomorphism $\calk(\calt_G) \to \Out(G_v)$ is finite.
  \end{enumerate}
 
  In the first case $G_v$ is commensurable with the fundamental group of a compact
  hyperbolic Riemann surface.  One way to see this is to note that $Q_v$ is a Fuchsian
  group which, by~\cite{Grunewald-JaikinZapirain-Zalesskii(2008)}, are `good' in the sense
  of Serre~\cite[Page~16]{Serre(1997)}.  So $G_v$ is residually finite
  by~\cite[Page~16]{Serre(1997)} and hence contains a torsion-free subgroup of finite
  index---this subgroup is the desired surface subgroup.  We claim that $G_v$ is
  $\FIN$-finite (a model for $E_\FIN G_v$ is given by the hyperbolic plane with action
  factoring through $Q_v$).  Since $G_v$ is residually finite we see that it is sofic and
  hence satisfies the Determinant Conjecture~\cite{Elek-Szabo(2005)}.  Thus, in this case
  we have $\rho^{(2)}(\Phi_v)=\rho^{(2)}(G_v\rtimes|_{\Phi_v}\IZ)$.
 
  In the second case (or in the case of an edge group) we have
 \[
 \rho^{(2)}(\Phi_v)=\rho^{(2)}(G_v\rtimes_{\Phi_v}\IZ)=0
\] 
by~\cite{Wegner(2009)}, because $G_v\rtimes \IZ$ is $\{$virtually polycyclic$\}$-by-$\IZ$
which is again virtually polycyclic.  Note that virtually polycyclic groups are
$\FIN$-finite.  Hence, $\Phi_v$ or $\Phi_e$ in this case is det-finite.
  
In the third case we have that $G_v$ is a group hyperbolic relative to finitely many
virtually polycyclic groups and so is $\FIN$-finite by Lemma~\ref{lem:rel_hyp_fin_finite}.
Moreover, we have that $\Phi_v$ is periodic.  In particular, $\Phi_v$ is det-finite and
$\rho^{(2)}(\Gamma_v)=0$ by
Theorem~\ref{the:elementary_properties_of_L2-torsion_of_autos}~\ref{the:elementary_properties_of_L2-torsion_of_autos:periodic}.
  
The formula claimed in the proposition now follows from
Theorem~\ref{the:properties_of_the_blow_up}~\ref{the:properties_of_the_blow_up:L_upper_(2)_torsion}.
\end{proof}

\begin{remark}\label{rem:rel_hyp_admissible}
  Let $G$ and $\Phi$ be as in the Proposition~\ref{thmRelHypAutos}.  If
  $\Gamma=G\rtimes_\Phi \IZ$ satisfies (DFJ), then $\rho^{(2)}(\Gamma)=\rho^{(2)}(\Phi)$.
  In particular, this holds whenever $G$ is toral relatively hyperbolic.  Indeed, in this
  case $\Gamma$ is torsion-free and satisfies the Farrell--Jones
  Conjecture~\cite{Andrew-Guerch-Hughes(2023)}.  However, in the case where $\Gamma$ has
  non-trivial torsion we do not know that the Farrell--Jones Conjecture holds for the Weyl
  groups of finite subgroups of $\Gamma$.
\end{remark}

\subsection{Polynomially growing automorphisms}\label{sec:poly}
Let $G$ be a group generated by a finite set $S$.  Let $|\cdot|$ denote the word metric on
$G$ with respect to $S$.  We say that $\Phi\in\Aut(G)$ has \emph{polynomial growth} (or
\emph{grows polynomially}) of \emph{degree} at most $d$ if for each $g\in G$ there is a
constant $C$ such that $|\Phi^n(g)|<Cn^d+C$ for all $n\in\IN$.

For a conjugacy class $[g]$ in $G$ let $||[c]||$ denote the length of a shortest
representative.  We say $\Phi\in\Out(G)$ is \emph{polynomially growing} if for each
conjugacy class $[g]$ of $G$ there is a constant $C$ such that $||\Phi^n([g])||<C n^d +C$
for all $n\in\IN$.

We remark that for any two finite generating sets $S_1$ and $S_2$ the corresponding word
metrics on $G$ are bi-Lipschitz equivalent. It follows that the two definitions of growth
are independent of the choice of a finite generating set.

Throughout the next few subsections we will prove vanishing of $L^2$-torsion for various
polynomially growing automorphisms.  We collect these results in
Theorem~\ref{thm:poly_anthology_L2tors} and remark that many of the arguments here are
inspired by arguments in~\cite{Andrew-Guerch-Hughes-Kudlinska(2023)} proving vanishing of
torsion homology growth using the cheap rebuilding
property~\cite{Abert-Bergeron-Fraczyk-Gaboriau(2025)}.


\subsubsection{Free products}
For any splitting of a group $G$ as a free product $\ast_{i=1}^kG_i$ with each $G_i$
non-trivial and not necessarily freely irreducible, we call the collection of conjugacy
classes of the $G_i$ a \emph{free factor system}.

Let $G_1,\dots,G_k$ be non-trivial finitely generated groups and let $F_N$ denote a free
group of rank $N$.  Let $G=\ast_{i=1}^k G_i \ast F_N$ and denote by $\calf$ the set of
conjugacy classes of the subgroups $G_i$ in $G$.

We say that the pair $(G,\calf)$ is a \emph{sporadic free product} if one of the following holds:
\begin{enumerate}
    \item $k=0$ and $G=\IZ$;
    \item $k=1$ and $G=G_1$ or $G=G_1\ast \IZ$; or
    \item $k=2$ and $G=G_1\ast G_2$.
\end{enumerate}

The key point for us is that automorphisms of sporadic free products have a canonical
$G$-tree that they preserve.  We recall this result here noting that it built on work of
Guirardel and Horbez~\cite{Guirardel-Horbez(2022)}

\begin{prop}~\cite[Proposition
  2.1]{Andrew-Guerch-Hughes-Kudlinska(2023)}\label{Prop:existenceBStree} Let $(G,\calf)$
  be a free product and let $\Phi \in \Out(G,\calf)$ be polynomially growing. There is a
  free factor system $\calf'$ of $(G, \calf)$ and $k \in \IN$ such that $\Phi^k$ preserves
  $\calf'$ and $(G, \calf')$ is sporadic. In particular, $\Phi^k$ preserves a Bass--Serre
  tree associated to $\calf'$.
\end{prop}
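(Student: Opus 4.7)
The plan is to use the theory of relative outer space for free products developed by Guirardel--Horbez, together with standard reduction arguments for polynomially growing outer automorphisms. First, I would pass to a suitable power: since the set of conjugacy classes of free factor systems refining $\calf$ with uniformly bounded Kurosh rank is finite (an analogue of the finiteness of free factor systems of bounded complexity for $\Out(F_n)$, extended to the free product setting by Guirardel--Horbez), the action of $\Phi$ on the poset of $\Phi$-periodic free factor systems refining $\calf$ has only finitely many orbits in any bounded complexity range. Choose $k \in \IN$ so that $\Phi^k$ fixes each element in this poset and let $\calf'$ be a $\Phi^k$-invariant refinement of $\calf$ which is \emph{maximal} with respect to proper refinement.

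Next, I would argue that $(G,\calf')$ must be sporadic. If not, then $(G,\calf')$ is a non-sporadic free product and $\Phi^k \in \Out(G,\calf')$ is still polynomially growing (polynomial growth is preserved by passing to powers). In the non-sporadic setting, one has a Culler--Vogtmann type outer space $\mathcal{PO}(G,\calf')$ with a compact closure in which $\Phi^k$ has a fixed point. Using the theory of (relative) train tracks for polynomially growing outer automorphisms of free products (following Feighn--Handel and Lyman, in the form developed by Guirardel--Horbez), the polynomial growth hypothesis rules out exponentially growing strata, and a standard analysis of the top stratum of a relative train track representative produces a $\Phi^k$-periodic proper refinement of $\calf'$, contradicting maximality.

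Once $(G,\calf')$ is known to be sporadic, the final clause is immediate: in each of the three sporadic cases listed there is a canonical (up to $G$-equivariant isomorphism) one-edge Bass--Serre tree realising the splitting, and any element of $\Out(G,\calf')$ induces a $G$-equivariant simplicial isomorphism of this tree. Thus $\Phi^k$ preserves such a tree.

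The main obstacle is the reduction step in the second paragraph: producing a proper $\Phi^k$-invariant refinement of $\calf'$ in the non-sporadic case. This is where the polynomial growth hypothesis is really used, and it requires importing a relative train track theorem for polynomially growing outer automorphisms of free products (rather than merely of free groups). Once that is in hand, the rest of the argument is a straightforward maximality and case-analysis pattern, and the passage from the tree to the Bass--Serre decomposition is covered by Example~\ref{exa:graph_of_groups_group_autos}.
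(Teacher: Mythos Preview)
The paper does not prove this statement at all: it is quoted verbatim as \cite[Proposition~2.1]{Andrew-Guerch-Hughes-Kudlinska(2023)}, with the remark that it builds on work of Guirardel--Horbez~\cite{Guirardel-Horbez(2022)}, and is then used as a black box. So there is no proof in the paper to compare your proposal against.

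That said, your sketch is a plausible outline of the argument one expects to find in the cited source. The one place to be careful is the step where you assert that a polynomially growing $\Phi^k$ in a non-sporadic $(G,\calf')$ admits a proper $\Phi^k$-periodic refinement: this is exactly the content that requires a relative train track or limiting-tree argument in the free-product setting, and you correctly flag it as the main obstacle. If you want to turn this into a self-contained proof you would need to either import a precise statement from Guirardel--Horbez or from Lyman's relative train track theory and verify it applies here, rather than just gesturing at it. Also note that your reference to Example~\ref{exa:graph_of_groups_group_autos} at the end is not needed for the proposition itself; the final sentence of the proposition is an immediate consequence of the sporadic classification and requires no further input from this paper.
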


We are now ready to prove a vanishing result for the $L^2$-torsion of polynomially growing
automorphisms of free products.

\begin{prop}\label{prop:l2tors_poly_free_prod}
  Let $G=G_1 \ast \ldots \ast G_k \ast F_N$ be a free product of groups satisfying
  condition (DFJ), let $\Phi$ be a polynomially-growing automorphism of $G$ which
  preserves the conjugacy classes of the factors $G_i$, and let $\Phi_i\colon G_i\to G_i$
  be the appropriate restriction of $\Phi$ up to conjugacy. Suppose that for every
  $i\in \{1,\ldots,k\}$, we have $\rho^{(2)}(\Phi_i)=0$, then $\rho^{(2)}(\Phi)=0$.
\end{prop}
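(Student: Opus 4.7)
The plan is to prove the proposition by strong induction on the number of free factors $k+N$ in the decomposition $G = G_1 \ast \dots \ast G_k \ast F_N$. The two base cases are $G = G_1$ (where $\rho^{(2)}(\Phi) = 0$ follows directly from the hypothesis $\rho^{(2)}(\Phi_1) = 0$, using conjugation invariance from Theorem~\ref{the:elementary_properties_of_L2-torsion_of_autos}\ref{the:elementary_properties_of_L2-torsion_of_autos:conjugation_invariance} to pass from $\Phi$ to its restriction $\Phi_1$) and $G = \IZ$ (where $\IZ$ is amenable and $\FIN$-finite, so $\eub{\IZ} = \IR$ is $\det$-$L^2$-acyclic and Theorem~\ref{the:elementary_properties_of_L2-torsion_of_autos}\ref{the:elementary_properties_of_L2-torsion_of_autos:L2-acyclic_group} gives $\rho^{(2)}(\Phi)=0$).

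For the inductive step, I would first apply Proposition~\ref{Prop:existenceBStree} to obtain a sporadic free factor system $\calf'$ of $(G, \calf)$ and an integer $m \geq 1$ such that $\Phi^m$ preserves $\calf'$ and acts on an associated Bass--Serre tree $T$. By the multiplicativity formula of Theorem~\ref{the:elementary_properties_of_L2-torsion_of_autos}\ref{the:elementary_properties_of_L2-torsion_of_autos:multiplicativity}, $\rho^{(2)}(\Phi^m) = m \cdot \rho^{(2)}(\Phi)$, so it suffices to prove $\rho^{(2)}(\Phi^m) = 0$. If $(G, \calf')$ reduces to a base case, we are done; otherwise, being sporadic means $\calf'$ gives a non-trivial splitting $G = H_1 \ast \IZ$ or $G = H_1 \ast H_2$ with trivial edge stabilisers in $T$.

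After replacing $m$ by a further multiple if necessary to trivialise the induced permutation of $\Phi^m$ on the finite quotient graph $T/G$, I would apply the graph-of-groups formula from Example~\ref{exa:graph_of_groups_group_autos} to get
\[
\rho^{(2)}(\Phi^m) = \sum_v \rho^{(2)}(\Phi^m|_{H_v}) - \sum_e \rho^{(2)}(\Phi^m|_{H_e}).
\]
The edge terms vanish because all edge stabilisers are trivial. For each vertex group $H_v$, if $H_v = \IZ$ (arising from the $\IZ$-factor when $G = H_1 \ast \IZ$), amenability again gives $\rho^{(2)}(\Phi^m|_{H_v}) = 0$. If $H_v = H_j$ is one of the big factors, then $H_v$ decomposes as a free product of a strict sub-collection of the original $G_i$ together with some $F_{N'}$, $N' < N$; the restriction $\Psi := \Phi^m|_{H_v}$ (defined up to conjugation) remains polynomially growing and preserves the conjugacy classes of the $G_i \subseteq H_v$, and multiplicativity ensures its restrictions to each such $G_i$ still have vanishing $L^2$-torsion. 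The inductive hypothesis, applied to the strictly smaller free product $H_v$, then yields $\rho^{(2)}(\Psi) = 0$.

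The main obstacle I anticipate is the technical verification that after passing to a large enough power, the action of $\Phi^m$ on the Bass--Serre tree of $(G, \calf')$ genuinely descends to the trivial action on the quotient graph, so that Example~\ref{exa:graph_of_groups_group_autos} applies with the correct vertex automorphisms. A secondary point is that isolating the restriction $\Phi^m|_{H_v}$ correctly (using conjugation invariance from Theorem~\ref{the:elementary_properties_of_L2-torsion_of_autos}\ref{the:elementary_properties_of_L2-torsion_of_autos:conjugation_invariance}) and checking that it inherits both polynomial growth and the relevant conjugacy-preservation of the $G_i$'s is needed to legitimately invoke the inductive hypothesis on each $H_v$.
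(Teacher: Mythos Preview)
Your proposal is correct and follows essentially the same approach as the paper: induction on the Grushko rank $k+N$, invoking Proposition~\ref{Prop:existenceBStree} to obtain a sporadic free factor system preserved by a power $\Phi^m$, then applying multiplicativity (Theorem~\ref{the:elementary_properties_of_L2-torsion_of_autos}\ref{the:elementary_properties_of_L2-torsion_of_autos:multiplicativity}) and the graph-of-groups formula (Example~\ref{exa:graph_of_groups_group_autos}) to reduce to the vertex groups, which have strictly smaller rank. The only cosmetic difference is that the paper phrases the inductive step in terms of the finite-index subgroup $\Gamma' = G \rtimes_{\Phi^m} \IZ$ of $\Gamma$ acting on the Bass--Serre tree, whereas you work directly with $\rho^{(2)}(\Phi^m)$; your anticipated obstacles are exactly the routine verifications the paper elides.
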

\begin{proof}
  This proof follows a similar structure
  to~\cite[Theorem~3.1]{Andrew-Guerch-Hughes-Kudlinska(2023)}.  We proceed by induction on
  the Grushko rank $k+N$.  If $k=1$ and $N=0$, then $\rho^{(2)}(\Gamma)=0$, since
  $\Gamma\cong\IZ$ by hypothesis.  If $k=0$ and $N=1$, then $\Gamma$ has finite index
  subgroup isomorphic to $\IZ^2$, in particular, $\rho^{(2)}(\Gamma)=0$.  Suppose now
  $k+N\geq 2$.  Let $\Phi$ be the image of $\Phi$ in $\Out(G)$ and let $\calf$ be the
  sporadic free factor system given by Proposition~\ref{Prop:existenceBStree}.  Let
  $\calt$ be the Bass--Serre tree of $G$ associated to the free factor system $\calf$ and
  note that there is a positive power of $\Phi$ which preserves $\calt$.  The vertex
  stabilisers of $G$ acting on $\calt$ are proper free factors of $G$ and hence have
  smaller Grushko rank than $G$ and the edge stabilisers are trivial.  It follows that a
  finite index subgroup $\Gamma'$ of $\Gamma$ acts on $\calt$ with vertex stabilisers of
  the form $\Gamma'_v=G_v\rtimes_{\Phi^n|_{G_v}} \IZ$ and with infinite cyclic edge
  stabilisers.  Now, $\Phi^n$ is det-finite and $\rho^{(2)}(\Phi^n_v)=0$ by the inductive
  hypothesis and $\rho^{(2)}(\IZ)=0$.  Thus, the result follows from
  Example~\ref{exa:graph_of_groups_group_autos} and
  Theorem~\ref{the:elementary_properties_of_L2-torsion_of_autos}~%
\ref{the:elementary_properties_of_L2-torsion_of_autos:multiplicativity}.
\end{proof}

We remark that in the special case of free-by-cyclic groups this recovers a result of Clay~\cite{Clay(2017free)}.

\begin{corollary} If $\Phi\in\Aut(F_N)$ is polynomially growing, then $\rho^{(2)}(F_N\rtimes_\Phi\IZ)=0$.
\end{corollary}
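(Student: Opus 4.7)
The plan is to deduce the corollary directly from Proposition~\ref{prop:l2tors_poly_free_prod} combined with the sofic case of Lemma~\ref{lem:criteria_for_det-finite}. First I would apply the proposition to the group $G = F_N$ viewed as a trivial free product with $k = 0$ factors, so that $G$ consists only of the free factor $F_N$ and there are no subgroups $G_i$ to track. The group $F_N$ satisfies condition (DFJ): it is a hyperbolic group and hence a Farrell--Jones group, and in particular its Weyl groups of finite subgroups are also Farrell--Jones, since $F_N$ is torsion-free. Because $k=0$ there are no restricted automorphisms $\Phi_i$ to consider, so the vanishing hypothesis $\rho^{(2)}(\Phi_i) = 0$ is vacuously satisfied, and the proposition yields $\rho^{(2)}(\Phi) = 0$.

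Next I would translate this into a statement about $F_N \rtimes_\Phi \IZ$ by invoking Lemma~\ref{lem:criteria_for_det-finite}\ref{lem:criteria_for_det-finite:sofic}. The group $F_N$ is $\FIN$-finite, since $F_N$ is torsion-free and a wedge of $N$ circles is a finite $K(F_N,1)$, hence a finite model for $\eub{F_N}$. Moreover $F_N$ is residually finite and therefore sofic, so it satisfies the Determinant Conjecture. The lemma then asserts that $F_N \rtimes_\Phi \IZ$ is $\det$-$L^2$-acyclic and
\[
\rho^{(2)}(F_N\rtimes_\Phi\IZ) = \rho^{(2)}(\Phi),
\]
which together with the previous step yields $\rho^{(2)}(F_N\rtimes_\Phi\IZ) = 0$.

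There is no real obstacle in this deduction; all of the technical content was already carried out in the proof of Proposition~\ref{prop:l2tors_poly_free_prod} (using the Guirardel--Horbez canonical sporadic free factor system and induction on Grushko rank). The only minor sanity check is that the case $G=F_N$ is covered by that proposition's induction: for $N=1$ one is in the base case ($\Gamma$ virtually $\IZ^2$ with vanishing $L^2$-torsion), and for $N \ge 2$ the induction step applies since any Bass--Serre tree coming from a sporadic free factor system has proper free-factor vertex stabilisers of strictly smaller Grushko rank.
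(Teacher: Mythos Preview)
Your proposal is correct and takes essentially the same approach as the paper: the corollary is presented there as an immediate special case of Proposition~\ref{prop:l2tors_poly_free_prod} with $k=0$, and your additional step invoking Lemma~\ref{lem:criteria_for_det-finite}\ref{lem:criteria_for_det-finite:sofic} to pass from $\rho^{(2)}(\Phi)$ to $\rho^{(2)}(F_N\rtimes_\Phi\IZ)$ is exactly the translation the paper leaves implicit.
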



\subsubsection{Relatively hyperbolic groups}
We first deal with the one-ended case.

\begin{theorem}\label{thm:poly_L2tors_rel_hyp_one_ended}
  Let $G$ be a group hyperbolic and one-ended relative to a finite collection $\calp$ of
  virtually polycyclic groups and let $\Phi \in \Aut(G)$ be polynomially growing.  Then
  $\rho^{(2)}(\Phi)=0$.
\end{theorem}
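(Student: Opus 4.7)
The plan is to reduce to Theorem~\ref{thmRelHypAutos} and then invoke Theorem~\ref{the:rho(surface_homeomorphism)} at each flexible vertex. First I would absorb any virtually cyclic members of $\calp$ into the hyperbolic part of the relative hyperbolic structure (this preserves the one-ended hypothesis), so that every automorphism of $G$ admits a power lying in $\calk(\calt_G)$. By the multiplicativity of $L^2$-torsion (Theorem~\ref{the:elementary_properties_of_L2-torsion_of_autos}\ref{the:elementary_properties_of_L2-torsion_of_autos:multiplicativity}), and since a power of a polynomially growing automorphism is again polynomially growing, I may replace $\Phi$ by a power and assume $\Phi \in \calk(\calt_G)$. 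Composing with an inner automorphism if needed and appealing to conjugation invariance (Theorem~\ref{the:elementary_properties_of_L2-torsion_of_autos}\ref{the:elementary_properties_of_L2-torsion_of_autos:conjugation_invariance}), I may also arrange that $\Phi$ preserves each vertex group $G_v$ setwise. Theorem~\ref{thmRelHypAutos} then gives
\[
\rho^{(2)}(\Phi) = \sum_{v \in \Flex(G)} \rho^{(2)}(G_v \rtimes_{\Phi|_{G_v}} \IZ),
\]
so the task reduces to showing vanishing at each flexible vertex.

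At a flexible vertex $v$, the stabiliser $G_v$ fits into a short exact sequence $1 \to F_v \to G_v \to Q_v \to 1$ with $F_v$ finite and $Q_v$ the fundamental group of a compact hyperbolic $2$-orbifold. After a further power of $\Phi$, I would pick a torsion-free finite-index subgroup $H_v \leq G_v$ isomorphic to $\pi_1(\Sigma_v)$ for a compact orientable surface $\Sigma_v$, such that $\Phi$ preserves $H_v$ and the restriction is orientation-preserving, and then use Theorem~\ref{the:elementary_properties_of_L2-torsion_of_autos}\ref{the:elementary_properties_of_L2-torsion_of_autos:restriction} to reduce to showing $\rho^{(2)}(\Phi|_{H_v}) = 0$. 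JSJ vertex groups in a relatively hyperbolic group are quasi-convex and hence undistorted, so the polynomial growth of $\Phi$ passes down to polynomial growth of $\Phi|_{H_v}$. By Dehn--Nielsen--Baer, $\Phi|_{H_v}$ is induced by a homeomorphism $f_v$ of $\Sigma_v$. Pseudo-Anosov classes grow exponentially on translation length, so the Nielsen--Thurston decomposition of $f_v$ has no pseudo-Anosov piece; consequently the geometric decomposition of the mapping torus $T_{f_v}$ contains only Seifert pieces. Theorem~\ref{the:rho(surface_homeomorphism)} then yields $\rho^{(2)}(\Phi|_{H_v}) = 0$, and summing over $\Flex(G)$ completes the proof.

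The hardest step will be the careful bookkeeping: tracking polynomial growth through the tower of taking a power of $\Phi$, restricting to a quasi-convex JSJ vertex group, and further restricting to a torsion-free surface subgroup, all while maintaining compatibility of the chosen automorphisms. Once the problem is localised on a flexible vertex and translated into surface mapping class language, the vanishing is a direct consequence of Nielsen--Thurston theory and Theorem~\ref{the:rho(surface_homeomorphism)}; the non-flexible vertex groups do not appear in the sum by the formulation of Theorem~\ref{thmRelHypAutos} and therefore require no additional work.
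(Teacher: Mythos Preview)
Your proposal is correct and follows essentially the same route as the paper: pass to a power of $\Phi$ lying in $\calk(\calt_G)$, apply Theorem~\ref{thmRelHypAutos} to reduce to flexible vertices, and then use Nielsen--Thurston theory together with the surface formula (Theorem~\ref{the:rho(surface_homeomorphism)}, equivalently~\cite[Theorem~0.7]{Lueck-Schick(1999)}) to conclude vanishing at each flexible vertex. You are in fact more careful than the paper on two points---explicitly absorbing virtually cyclic peripherals so that a power of $\Phi$ is guaranteed to land in $\calk(\calt_G)$, and spelling out the passage from the finite-by-orbifold vertex group $G_v$ to an honest surface subgroup via restriction before invoking Dehn--Nielsen--Baer---but the underlying strategy is identical.
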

\begin{proof}
  Let
  $\Gamma=G\rtimes_\Phi\IZ$.
  Let $H$ be a characteristic finite index torsion-free subgroup of $G$, let $n>0$ be such
  that $\Phi^n\in\calk(\calt_G)$, and let $\Lambda=G\rtimes_{\Phi^n} \IZ$.  Note that
  $\Lambda$ has finite index in $\Gamma$.   We denote $\Phi^n$ by $\Psi$.

  By Proposition~\ref{thmRelHypAutos}, the only possible non-zero contribution to
  $\rho^{(2)}(\Psi)$ is from the terms $\rho^{(2)}(G_v\rtimes_{\Psi_v}\IZ)$ where $v$ runs over
  flexible vertices.  In this case, $\Lambda_v=G_v\rtimes_{\Psi|_{v}} \IZ$, where
  $G_v$ is the fundamental group of a compact hyperbolic surface $S_v$ and $\Psi|_{v}$
  is an element of $\Mod(S_v)$ with polynomial growth.  Hence, $\Psi^n|_{v}$ is periodic
  or a Dehn twist.  Thus, by~\cite[Theorem~0.7]{Lueck-Schick(1999)} we have $\rho^{(2)}(\Lambda_v)=0$.
  The result now follows from Theorem~\ref{the:elementary_properties_of_L2-torsion_of_autos}~%
\ref{the:elementary_properties_of_L2-torsion_of_autos:multiplicativity}
\end{proof}

\begin{proposition}\label{prop:poly_L2tors_rel_hyp_general}
  Let $G$ be a group hyperbolic relative to a finite collection $\calp$ of
  virtually polycyclic groups and let $\Phi \in \Aut(G)$ be polynomially growing.  Then
  $\rho^{(2)}(\Phi)=0$.
\end{proposition}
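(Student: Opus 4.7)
The plan is to reduce to the one-ended case established in Theorem~\ref{thm:poly_L2tors_rel_hyp_one_ended} via a $\Phi$-invariant relative Grushko decomposition, paralleling the strategy used for free products in Proposition~\ref{prop:l2tors_poly_free_prod}.

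First, I would invoke a relative Grushko decomposition: a splitting $G = G_1 \ast \cdots \ast G_k \ast F_N$ in which each $G_i$ is one-ended relative to $\calp|_{G_i}$ (the subgroups in $\calp$ conjugate into $G_i$). Such a decomposition exists for any finitely generated relatively hyperbolic group and is canonical up to permutation of its factors. Since every automorphism of a relatively hyperbolic group with non-relatively-hyperbolic parabolics (as is the case for virtually polycyclic ones) preserves the conjugacy classes of the parabolic subgroups, we have $\Phi \in \Aut(G, \calp)$. Hence $\Phi$ permutes the conjugacy classes of the $G_i$, and some power $\Phi^n$ preserves each of them. By Theorem~\ref{the:elementary_properties_of_L2-torsion_of_autos}\ref{the:elementary_properties_of_L2-torsion_of_autos:multiplicativity} it suffices to prove $\rho^{(2)}(\Phi^n) = 0$.

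Next I would realize this splitting as a graph of groups with trivial edge stabilisers (using the standard Bass--Serre tree for a free product, with the free factor $F_N$ contributing $N$ loops at a base vertex of trivial stabiliser) and apply the graph-of-groups combination formula for automorphisms (Example~\ref{exa:graph_of_groups_group_autos}), absorbing any inner-automorphism twists that arise when identifying $\Phi^n$-images with the $G_i$ by Theorem~\ref{the:elementary_properties_of_L2-torsion_of_autos}\ref{the:elementary_properties_of_L2-torsion_of_autos:conjugation_invariance}. Since the edge groups are trivial and the base vertex is trivial, this yields
\[
\rho^{(2)}(\Phi^n) = \sum_{i=1}^{k} \rho^{(2)}(\Phi^n|_{G_i}).
\]
Each $G_i$ is undistorted in $G$ as a Grushko factor, so $\Phi^n|_{G_i}$ is again polynomially growing as an automorphism of $G_i$. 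If $G_i$ is itself virtually polycyclic (a parabolic factor), then $G_i \rtimes \IZ$ is virtually polycyclic and $\rho^{(2)}(\Phi^n|_{G_i}) = 0$ by~\cite{Wegner(2009)}. Otherwise $G_i$ is one-ended relative to the finite collection $\calp|_{G_i}$ of virtually polycyclic groups, and Theorem~\ref{thm:poly_L2tors_rel_hyp_one_ended} applies to give $\rho^{(2)}(\Phi^n|_{G_i}) = 0$.

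The main obstacle is producing a sufficiently canonical $\Phi$-invariant relative Grushko decomposition and confirming that polynomial growth descends to each free factor. The existence and canonicity of such a decomposition should follow from the JSJ framework of Guirardel--Levitt~\cite{Guirardel-Levitt(2017JSJ)} together with Bowditch's peripheral splitting, while preservation of the parabolic structure by automorphisms in our setting follows from the almost malnormality of the parabolic collection and standard facts about quasi-isometric invariance of the peripheral structure. The undistortion needed to propagate polynomial growth to each $\Phi^n|_{G_i}$ is standard for free factors, but should be recorded explicitly.
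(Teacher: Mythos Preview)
Your overall reduction is the same as the paper's: decompose $G$ as a free product into relatively one-ended pieces and a free part, kill the contribution of each piece using Theorem~\ref{thm:poly_L2tors_rel_hyp_one_ended} (or Wegner for the polycyclic/finite pieces), and combine. The paper carries this out by passing to a characteristic finite-index subgroup $H$ of $G$, taking the free-product decomposition of $H$, and then invoking Proposition~\ref{prop:l2tors_poly_free_prod} directly; the conclusion for $\Phi$ follows from restriction/multiplicativity.

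There is, however, a genuine gap in your middle step. Knowing that $\Phi^n$ preserves the conjugacy classes of the Grushko factors $G_i$ does \emph{not} give you an automorphism of the graph of groups in the sense required by Example~\ref{exa:graph_of_groups_group_autos}: it does not make any particular Bass--Serre tree for the free product $\Phi^n$-invariant. Concretely, if $\Phi^n(G_i)=g_i G_i g_i^{-1}$ with distinct $g_i$, there is in general no single inner automorphism $c_g$ with $(c_g\circ\Phi^n)(G_i)=G_i$ for all $i$, so the ``absorb the twists via Theorem~\ref{the:elementary_properties_of_L2-torsion_of_autos}\ref{the:elementary_properties_of_L2-torsion_of_autos:conjugation_invariance}'' manoeuvre cannot produce a $\varphi\colon (G,Y)\to(G,Y)$ as Example~\ref{exa:graph_of_groups_group_autos} demands. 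The Grushko deformation space is canonical, but no single tree in it need be fixed by $\Phi^n$.

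This is exactly what Proposition~\ref{prop:l2tors_poly_free_prod} is designed to handle: its inductive proof uses Proposition~\ref{Prop:existenceBStree} to pass to a \emph{sporadic} free factor system at each stage, where an invariant tree \emph{does} exist, and then applies Example~\ref{exa:graph_of_groups_group_autos} there. So the fix is simply to replace your direct appeal to Example~\ref{exa:graph_of_groups_group_autos} with an appeal to Proposition~\ref{prop:l2tors_poly_free_prod}; once you do that, your argument and the paper's coincide (the paper's passage to a characteristic subgroup $H$ is a minor variation, not an essentially different idea). You should also check the (DFJ) hypothesis on the factors $G_i$ required by Proposition~\ref{prop:l2tors_poly_free_prod}.
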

\begin{proof}
  Let $\Gamma=G\rtimes_\Phi\IZ$.  If $G$ has finitely many ends we are done by
  Proposition~\ref{thm:poly_L2tors_rel_hyp_one_ended}. If not, then since $G$ is finitely
  presented, $G$ admits a finite index characteristic subgroup $H$ which splits as a free
  product of a (possibly trivial) free group $F_N$ and finitely many groups $G_i$, each
  hyperbolic relative to a finite collection of virtually polycyclic groups and each with
  finitely many ends.  We may pass to a large power $\ell$ of $\Phi$ such that $\Phi^\ell$
  preserves the conjugacy classes of the $G_i$.  Now, by
  Proposition~\ref{thm:poly_L2tors_rel_hyp_one_ended} we have
  $\rho^{(2)}(G_i\rtimes_{\Phi^\ell}\IZ)=0$ for each $i$.  We are have now verified the
  hypothesis of Proposition~\ref{prop:l2tors_poly_free_prod} applied to the group
  $H\rtimes_{\Phi^\ell}\IZ$.  The result follows from
  Theorem~\ref{the:Basic_properties_of_L_upper_2-torsion}~%
\ref{the:Basic_properties_of_L_upper_2-torsion:passage_to_subgroups_of_finite_index}.
\end{proof}


\subsubsection{Right-angled Artin and Coxeter groups}

Let $L$ be a flag complex.  The \emph{right-angled Artin group} (RAAG) $A_L$ is defined to
be the group with presentation
\[\langle L^{(0)}\ |\ [v,w] \text{ if } \{v,w\}\in L^{(1)}\rangle.\]
The \emph{right-angled Coxeter group} (RACG) is the group
\[W_L=A_L/\langle\langle v^2\ |\ v\in L^{(0)}\rangle\rangle.\] We define a number of
automorphisms of RAAGs and RACGs:
\begin{enumerate}
\item \emph{graph automorphisms}, that is automorphism induced from $L$;
\item \emph{inversions}, which send $v\mapsto v^{-1}$ and $u\mapsto u$ for $u\neq v$ and
  $u,v\in L^{(0)}$;
\item \emph{partial conjugations} $k_{W,C}$ for $w\in L^{(0)}$ and a connected component
  $C$ of $L\setminus \mathrm{st}(w)$, which are defined by $k_{w,C}(u)=w^{-1}uw$ if
  $u\in C^{(0)}$ and $k_{w,C}(u)=u$ if $u\in L^{(0)}\setminus C$;
\item \emph{folds} $t_{v,w}$ for any $v,w\in L^{(0)}$ with
  $\mathrm{lk}(v)\subseteq \mathrm{lk}(w)$, which are defined by $t_{v,w}(v)=vw$ and
  $t_{v,w}(u)=u$ for all $u\in L^{(0)}\setminus\{v\}$.
\end{enumerate}
We say an automorphism of $A_L$ (resp. $W_L$) is \emph{untwisted} if it is contained in
the subgroup $U(A_L)\leqslant \Aut(A_L)$ (resp. $U(W_L)\leqslant \Aut(W_L)$) which is
generated by the graph automorphisms, inversions, partial conjugations, and folds.

By~\cite[Proposition~A(3)]{Fioravanti(2024)}, the untwisted automorphisms of $A_L$ are
exactly the automorphism which preserve the standard coarse median structure on $A_L$.
By~\cite{Sale-Susse(2019)}, the subgroup of untwisted automorphisms of $\Aut(W_L)$ has
finite index.

\begin{prop}\label{prop:poly_L2tors_RAAG}
  Let $L$ be a flag complex on $[m]$ and let $\Gamma=A_L\rtimes_\Phi \IZ$.  If $\Phi$ is
  an untwisted and polynomially growing automorphism of $A_L$, then
  $\rho^{(2)}(\Gamma)=0$.
\end{prop}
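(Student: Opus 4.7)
The plan is to combine the reduction $\rho^{(2)}(\Gamma) = \rho^{(2)}(\Phi)$ with an induction on $m = |L^{(0)}|$, using a $\Phi^n$-invariant splitting of $A_L$ into standard parabolic pieces. First, $A_L$ is torsion-free with a finite classifying space (the Salvetti complex) and is residually finite (indeed linear), hence sofic. Therefore $A_L$ is $\FIN$-finite and satisfies the Determinant Conjecture, and Lemma~\ref{lem:criteria_for_det-finite}~\ref{lem:criteria_for_det-finite:Det_Con} gives $\rho^{(2)}(\Gamma) = \rho^{(2)}(\Phi)$. It remains to prove $\rho^{(2)}(\Phi) = 0$.

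For the base case $m = 1$, $A_L \cong \IZ$, and any automorphism is $\pm\id$, which is periodic, so $\rho^{(2)}(\Phi) = 0$ by Theorem~\ref{the:elementary_properties_of_L2-torsion_of_autos}~\ref{the:elementary_properties_of_L2-torsion_of_autos:periodic}. For the inductive step, suppose first that $L$ is disconnected, $L = L_1 \sqcup \cdots \sqcup L_k$, so $A_L = A_{L_1} \ast \cdots \ast A_{L_k}$. By Proposition~\ref{Prop:existenceBStree} (applied with the evident free factor system) and Theorem~\ref{the:elementary_properties_of_L2-torsion_of_autos}~\ref{the:elementary_properties_of_L2-torsion_of_autos:multiplicativity}, after replacing $\Phi$ by a power we may assume $\Phi$ preserves the conjugacy class of each factor and restricts, up to conjugacy, to an untwisted polynomially growing automorphism $\Phi_i$ on each $A_{L_i}$ (untwistedness and polynomial growth both restrict to standard parabolic subgroups since the generators of $U(A_L)$ restrict to words in the generators of $U(A_{L_i})$). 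The inductive hypothesis gives $\rho^{(2)}(\Phi_i) = 0$, and Proposition~\ref{prop:l2tors_poly_free_prod} yields $\rho^{(2)}(\Phi) = 0$.

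If $L$ is connected, I would invoke the structure theory for untwisted polynomially growing automorphisms developed and used in~\cite{Andrew-Guerch-Hughes-Kudlinska(2023)} (building on Day--Wade's relative outer space for RAAGs): some power $\Phi^n$ preserves a non-trivial graph of groups decomposition of $A_L$ whose vertex and edge groups are standard parabolic subgroups $A_{L'}$ with $L' \subsetneq L$, and the restriction of $\Phi^n$ to each such parabolic is again untwisted and polynomially growing. Granted this, the inductive hypothesis gives vanishing of the $L^2$-torsion of each restriction, Example~\ref{exa:graph_of_groups_group_autos} produces $\rho^{(2)}(\Phi^n) = 0$, and Theorem~\ref{the:elementary_properties_of_L2-torsion_of_autos}~\ref{the:elementary_properties_of_L2-torsion_of_autos:multiplicativity} finishes the argument.

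The main obstacle is the structural claim in the connected case: extracting a $\Phi^n$-invariant splitting over a proper parabolic whose restrictions are again untwisted and polynomially growing. This is where the assumption that $\Phi$ is \emph{untwisted} is essential, since only then does $\Phi$ act on the appropriate deformation space with polynomial growth forcing a fixed point (and hence a splitting) on a simplicial completion. I expect the easiest path is to quote the relevant decomposition theorem from~\cite{Andrew-Guerch-Hughes-Kudlinska(2023)} verbatim, and then verify that the two properties "untwisted" and "polynomially growing" descend to the induced automorphisms on vertex and edge parabolics, which is a direct matter of tracing the generators of $U(A_L)$ through the splitting.
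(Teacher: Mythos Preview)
Your overall strategy---induction on $m$, reducing to $\rho^{(2)}(\Phi)$ via soficity, and propagating vanishing through $\Phi^n$-invariant parabolic splittings---matches the paper's. However, your case split ``$L$ disconnected'' versus ``$L$ connected'' is too coarse, and the structural input you invoke for the connected case is not quite available in the form you state.

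The gap is this: when $L$ is connected but is a \emph{join}, $A_L$ is a nontrivial direct product $A_L \cong \prod_i A_{K_i} \times \IZ^k$, and there is no reason for a power of $\Phi$ to preserve a nontrivial graph-of-groups decomposition of $A_L$ with proper parabolic vertex and edge groups. The paper handles this case separately: if $k \ge 1$ then $A_L$ is $L^2$-acyclic and one is done immediately; if $k = 0$, a power of $\Phi$ preserves each direct factor, and one acts on the \emph{product} of the trees $\calt_i$ supplied (by induction on each factor) by Proposition~\ref{Prop:existenceBStree} or by Fioravanti's splitting result. The cell stabilisers in this product are again of the form $A_{J_\sigma} \rtimes \IZ$ with $J_\sigma \subsetneq L$, and Theorem~\ref{the:properties_of_the_blow_up}~\ref{the:properties_of_the_blow_up:L_upper_(2)_torsion} finishes. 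Your Example~\ref{exa:graph_of_groups_group_autos} (a one-dimensional tool) does not cover this.

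For the freely and directly irreducible case, the splitting you need is~\cite[Proposition~D]{Fioravanti(2024)}: any untwisted automorphism of such an $A_L$ preserves an amalgam $A_{K_1} \ast_{A_{K_3}} A_{K_2}$ with each $K_i$ a proper full subcomplex. This is not extracted from relative outer space in the way you suggest; citing~\cite{Andrew-Guerch-Hughes-Kudlinska(2023)} alone would leave the reader without the precise statement. Once you insert Fioravanti's result and add the direct-product case as above, your argument becomes the paper's.
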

We note that the following argument is structurally very similar to the proof
of~\cite[Theorem~5.1]{Andrew-Guerch-Hughes-Kudlinska(2023)}
\begin{proof}
  We proceed by induction on $m$.  The base case, when $m=1$, implies $\Gamma$ is
  virtually isomorphic to $\IZ^2$.  In this case we have $\rho^{(2)}(\Gamma)=0$ as
  required.  We now suppose $m>1$.  Note that if $K$ is a full subcomplex of $L$ then any
  untwisted automorphism of $A_L$ preserving $A_K\leqslant A_L$ restricts to an untwisted
  automorphism of $A_K$.  There are three cases to consider:

  The first case is when $A_L$ is freely reducible.  In this case
  $A_L=A_{K_1}\ast\dots A_{K_k}\ast F_n$ and each $K_i$ and $[n]$ is a full subcomplex of
  $L$.  Note that each $K_i$ and if $n\neq 0$ the complex $[n]$ all contain at least one
  vertex and strictly less than $m$ vertices.  Now, pass to a sufficiently high power
  $\ell$ of $\Phi$ which preserves the conjugacy classes of the $A_{K_i}$.  Then, by the
  inductive hypothesis $\rho^{(2)}(A_{K_i}\rtimes_{\Phi^\ell} \IZ)=0$.  The case then
  follows from Proposition~\ref{prop:l2tors_poly_free_prod} and
  Theorem~\ref{the:Basic_properties_of_L_upper_2-torsion}~%
\ref{the:Basic_properties_of_L_upper_2-torsion:passage_to_subgroups_of_finite_index}.

  The second case is when $A_L$ is both freely and directly irreducible.  In this case,
  by~\cite[Proposition~D]{Fioravanti(2024)} we have that
  $A_L=A_{K_1}\ast_{A_{K_3}}A_{K_2}$ with $K_3=K_1\cap K_2$ and each $K_1,K_2,K_3$ a
  non-empty proper full subcomplex of $L$.  Passing to a large enough power $\ell$ of
  $\Phi$, the Bass-Serre tree $\calt$ of the splitting is $\Phi^\ell$ invariant and
  $\Phi^\ell$ preserves the stabilisers, that is $\Phi^\ell(A_{K_i})=A_{K_i}$.  Thus,
  $\Gamma$ admits a finite index subgroup $\Lambda$ acts on $\calt$ with stabilisers of
  the form $A_{K_i}\rtimes_{\Phi^\ell} \IZ$.  By induction these stabilisers have
  vanishing $L^2$-torsion.  Thus, the case follows from
  Theorem~\ref{the:properties_of_the_blow_up}~\ref{the:properties_of_the_blow_up:L_upper_(2)_torsion}
  and
  Theorem~\ref{the:Basic_properties_of_L_upper_2-torsion}~%
\ref{the:Basic_properties_of_L_upper_2-torsion:passage_to_subgroups_of_finite_index}.

  The final case is when $A_L$ is directly reducible.  In this case
  $A_L\cong \prod_{i=1}^n A_{K_i}\times \IZ^k$ where each $K_i$ is a full subcomplex of
  $L$ and $A_{K_i}$ is directly irreducible and non-cyclic.  If $k\geq 0$, then $A_L$ is
  $L^2$-acyclic and so $\rho^{(2)}(\Gamma)=0$ by
  Theorem~\ref{the:elementary_properties_of_L2-torsion_of_autos}~%
\ref{the:elementary_properties_of_L2-torsion_of_autos:L2-acyclic_group}.
  Thus, we may suppose $k=0$.  Now, we have that $A_L$ acts on a product of trees
  $X=\prod_{i=1}^n \calt_{i}$, where $\calt_i$ is a tree either provided by
  Proposition~\ref{Prop:existenceBStree} or by~\cite[Proposition~D]{Fioravanti(2024)}
  depending on if $A_{K_i}$ is freely reducible or not.  We now pass to a sufficiently
  large power $\ell$ of $\Phi$ such that $\Phi^\ell|_{A_{K_i}}$ preserves each tree
  $\calt_i$.  In particular, if $\sigma$ is a cell of $X$ we have
  $\Phi^\ell(\Stab_{A_L}(\sigma))=\Stab_{A_L}(\sigma)$.  Moreover, each stabiliser of a
  cell in $X$ under the action of $A_L$ is a product of the stabilisers of some of the
  groups $A_{K_i}$ acting on $\calt_i$ and the remaining $A_{K_j}$.  In particular, each
  stabiliser of $A_L\rtimes_{\Phi^\ell}\IZ$ acting on $X$ is of the form
  $A_{J_\sigma}\rtimes_{\Phi^\ell|_{A_{J_\sigma}}}\IZ$ where $J_\sigma$ is a full proper
  non-empty subcomplex of $L$.  Hence, the $L^2$-torsion of the stabilisers vanishes by
  induction and the proposition follows from
  Theorem~\ref{the:properties_of_the_blow_up}~%
\ref{the:properties_of_the_blow_up:L_upper_(2)_torsion}
  and
  Theorem~\ref{the:Basic_properties_of_L_upper_2-torsion}~%
\ref{the:Basic_properties_of_L_upper_2-torsion:passage_to_subgroups_of_finite_index}.
\end{proof}

The following proposition is proved verbatim taking into account the remarks after Theorem
E and at the start of Section 5~\cite{Fioravanti(2024)} and after noting the subgroup of
untwisted automorphisms of $\Aut(W_L)$ has finite index~\cite{Sale-Susse(2019)}.

\begin{prop}\label{prop:poly_L2tors_RACG}
  Let $L$ be a flag complex on $[m]$ and let $\Gamma=W_L\rtimes_\Phi \IZ$.
  If $\Phi$ is a polynomially growing automorphism of $A_L$, then $\rho^{(2)}(\Gamma)=0$.
\end{prop}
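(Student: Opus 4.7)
The plan is to replicate the proof of Proposition~\ref{prop:poly_L2tors_RAAG} for RAAGs, incorporating two key adaptations for the RACG setting. First, since by~\cite{Sale-Susse(2019)} the untwisted automorphisms $U(W_L) \leqslant \Aut(W_L)$ form a finite-index subgroup, I can replace $\Phi$ by a power $\Phi^k$ lying in $U(W_L)$; by Theorem~\ref{the:elementary_properties_of_L2-torsion_of_autos}\ref{the:elementary_properties_of_L2-torsion_of_autos:multiplicativity}, vanishing of $\rho^{(2)}(\Phi^k)$ implies vanishing of $\rho^{(2)}(\Phi)$. Moreover, since $W_L$ is linear and hence sofic, both $W_L$ and $\Gamma = W_L \rtimes_\Phi \IZ$ satisfy the Determinant Conjecture (the extension by $\IZ$ preserves soficity), so $\rho^{(2)}(\Gamma) = \rho^{(2)}(\Phi)$ by Lemma~\ref{lem:criteria_for_det-finite}\ref{lem:criteria_for_det-finite:sofic}. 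It therefore suffices to establish $\rho^{(2)}(\Phi) = 0$ for $\Phi$ untwisted and polynomially growing.

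I would then proceed by induction on $m = |L^{(0)}|$. The base case $m = 1$ gives $W_L \cong \IZ/2$ and $\Gamma$ virtually $\IZ$, so $\rho^{(2)}(\Gamma) = 0$ is immediate. For the inductive step, the second adaptation enters: one invokes the analogues for RACGs of~\cite[Proposition~D and Theorem~E]{Fioravanti(2024)} (described in the remarks after Theorem~E and at the start of Section~5 of loc.\ cit.) to obtain the same trichotomy as in the RAAG proof: either (1) $W_L$ is freely reducible as $W_{K_1} \ast \cdots \ast W_{K_k} \ast F_n$ with each $K_i$ a proper subcomplex of $L$, (2) $W_L$ is freely and directly irreducible and splits as an amalgamated product $W_{K_1} \ast_{W_{K_3}} W_{K_2}$ with $K_3 = K_1 \cap K_2$ a non-empty proper subcomplex, or (3) $W_L$ is directly reducible as a product of lower-dimensional directly irreducible RACGs together with possible $\IZ/2$ factors. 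In each case, after passing to a suitable power of $\Phi$ preserving the relevant tree or product decomposition, I apply Proposition~\ref{prop:l2tors_poly_free_prod} in case (1), and Theorem~\ref{the:properties_of_the_blow_up}\ref{the:properties_of_the_blow_up:L_upper_(2)_torsion} combined with Theorem~\ref{the:Basic_properties_of_L_upper_2-torsion}\ref{the:Basic_properties_of_L_upper_2-torsion:passage_to_subgroups_of_finite_index} in cases (2) and (3), reducing each stabiliser to a semidirect product $W_J \rtimes_{\Phi^\ell} \IZ$ with $J$ a proper full subcomplex of $L$ and invoking the inductive hypothesis.

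The main obstacle I anticipate is the third case: for a RAAG a direct $\IZ^k$ factor with $k \geq 1$ furnishes an infinite amenable normal subgroup and immediately yields $L^2$-acyclicity of $\Gamma$, but for a RACG the analogous direct factors $\IZ/2$ are finite and do not yield $L^2$-acyclicity by themselves. Happily, a direct $\IZ/2$ factor of $W_L$ corresponds to $L$ being a join $L' \ast \{v\}$, so $W_L \cong W_{L'} \times \IZ/2$ with $W_{L'}$ supported on strictly fewer vertices and $\IZ/2$ a finite normal subgroup of $\Gamma$; then Theorem~\ref{the:Basic_properties_of_L_upper_2-torsion}\ref{the:Basic_properties_of_L_upper_2-torsion:finite_quotient} reduces the vanishing to the analogous statement for $W_{L'} \rtimes_\Phi \IZ$, which is handled by the inductive hypothesis. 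With this careful handling of the $\IZ/2$ factors, the remainder of the argument transports verbatim from the RAAG case.
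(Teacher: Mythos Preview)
Your proposal is correct and follows essentially the same approach as the paper: the paper's proof consists of a single sentence stating that the argument is verbatim from Proposition~\ref{prop:poly_L2tors_RAAG}, using the RACG analogues of Fioravanti's results (as noted in the remarks after Theorem~E and at the start of Section~5 of~\cite{Fioravanti(2024)}) together with the finite-index result of~\cite{Sale-Susse(2019)}. You have correctly identified both ingredients and have moreover spelled out the one genuine adaptation needed in case~(3), where the central $\IZ^k$ in the RAAG setting is replaced by a finite $(\IZ/2)^k$ in the RACG setting, handled via Theorem~\ref{the:Basic_properties_of_L_upper_2-torsion}\ref{the:Basic_properties_of_L_upper_2-torsion:finite_quotient} rather than $L^2$-acyclicity.
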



\subsubsection{Anthology}
We collect the above results on polynomially growing automorphisms, namely 
Propositions~\ref{prop:poly_L2tors_rel_hyp_general},~\ref{prop:poly_L2tors_RAAG} and~\ref{prop:poly_L2tors_RACG}, into one theorem.  This
answers~\cite[Question~1.2]{Andrew-Guerch-Hughes-Kudlinska(2023)}.

\thmAnthology


\subsection{Handlebody groups}

Let $V_g$ denote a genus $g$ handlebody and let $\Mod(V_g)$ denote its mapping class
group, \emph{the genus $g$ handlebody group}.  The reader is referred
to~\cite{Andrew-Hensel-Hughes-Wade(2025)} for more information.  Our final result
answers~\cite[Problem~28]{Andrew-Hensel-Hughes-Wade(2025)}.

\thmHandlebody
\begin{proof}
  Let $X$ denote the disc complex for $\Mod(V_g)$.  We denote by $G$ the intersection of
  the handlebody group with the pure mapping class group in $\Mod(S_g)$.  This is a finite
  index torsion-free subgroup of $\Mod(V_g)$ which acts on the disc complex $X$
  cocompactly and cellularly such that the set-wise stabiliser of any cell is equal to its
  point-wise stabiliser.  In particular, $X$ is a finite $G$-CW-complex.  We claim $G$ is
  admissible, indeed, $EG$ is $L^2$-acyclic
  by~\cite[Theorem~6.1]{Andrew-Hensel-Hughes-Wade(2025)}.  Since $G$ is a subgroup of the
  mapping class group it is residually finite~\cite{Grossman(1974)}.  Hence, sofic and so
  satisfies the Determinant Conjecture~\cite[Theorem~5]{Elek-Szabo(2005)}.
    
  By~\cite[\S1.A]{Andrew-Hensel-Hughes-Wade(2025)}, for every cell $\sigma\in X$, the
  stabiliser $G_\sigma$ fits into an exact sequence
  \[1\to \IZ^{n_\sigma} \to G_\sigma \to H_\sigma \to 1\] where $H_\sigma$ has finite
  cohomological dimension and is a torsion-free finite index subgroup of a group of type
  $\mathsf{VF}$.  In particular, $\rho^{(2)}(G_\sigma)=0$ by~\cite{Wegner(2009)}.
  Applying this to Theorem~\ref{the:properties_of_the_blow_up}~%
\ref{the:properties_of_the_blow_up:L_upper_(2)_torsion} we obtain the vanishing of the
  $L^2$-torsion of $\Mod(V_g)$.  The  vanishing of the torsion homology growth
  is~\cite[Theorem~6.1]{Andrew-Hensel-Hughes-Wade(2025)}.
\end{proof}


\typeout{----------------------------- References ------------------------------}

\addcontentsline{toc<<}{section}{References} 


\end{document}